%% file: template.tex
\documentclass{article}

\usepackage{arxiv}

\usepackage[utf8]{inputenc} 
\usepackage[T1]{fontenc}    
\usepackage{hyperref}       
\usepackage{url}            
\usepackage{booktabs}       
\usepackage{amsfonts}       
\usepackage{nicefrac}       
\usepackage{microtype}      
\usepackage{lipsum}
\usepackage{graphicx}

\usepackage{natbib}
\usepackage{algorithmic}
\usepackage{algorithm}
\usepackage{amsmath,amssymb}
\usepackage{amsthm}
\usepackage{bm}
\usepackage{mathrsfs}
\usepackage{subcaption}

\newtheorem{dfn}{Definition}
\newtheorem{prop}{Proposition}
\theoremstyle{remark}
\newtheorem{rem}{Remark}
\newtheorem{coro}{Corollary}
\newtheorem{ass}{Assumption}

\newcommand{\argmin}{\operatornamewithlimits{argmin}}
\newcommand{\argmax}{\operatornamewithlimits{argmax}}

\title{Flexible and Reliable Network Design \\ for Emerging Transportation Services: \\ Multi-stage Stochastic Programming Approach}

\author{
 Riki Kawase \thanks{Corresponding author.} \\
  Institute of Science Tokyo \\
  \texttt{kawase.r.ac@m.titech.ac.jp} \\\\
 Koki Satsukawa \\
Tohoku University \\
\texttt{satsukawa@tohoku.ac.jp} \\\\
 Toru Seo \\
Institute of Science Tokyo \\
\texttt{seo.t.aa@m.titech.ac.jp} \\  
}

\begin{document}
\maketitle
\begin{abstract}
This paper proposes a general framework for flexible and reliable network design problems (FR-NDPs).
The framework enables planners to change infrastructure investments in response to realized uncertainties, while ensuring desired levels of reliability.
Motivated by emerging transportation services such as shared autonomous vehicle (SAV) systems, where historical data are scarce and technological developments uncertain, FR-NDPs integrate strategic investment decisions with operational control.
We formulate the FR-NDPs as risk-averse multi-stage stochastic problems to be solvable by stochastic dual dynamic programming (SDDP) and establish sufficient conditions under which strategic and operational subproblems converge to the global optimum.
We illustrate applications to SAV capacity expansion and integrated SAV-BRT (Bus Rapid Transit) route design, and numerical experiments on a Midtown Manhattan network highlight three key findings: (i) flexibility and reliability act complementarily to hedge against severe scenarios while mitigating the loss of expected performance; (ii) flexibility in investment planning allows dynamic risk hedging, with risk-averse planners reducing early-stage investments to preserve adaptability; and (iii) differences in operational flexibility between SAV and BRT systems are reflected in strategic decisions, with risk-averse planners tending to refrain investment in transport modes with lower operational flexibility.
\end{abstract}

\keywords{Network design \and Stochastic programming \and Dynamic programming \and Risk-averse \and Shared autonomous vehicle}

\input{ch1.tex}
\input{ch2.tex}
\input{ch2-1.tex}

\input{ch2-4.tex}

\input{ch2-5.tex}

\input{ch3-1.tex}
\input{ch3-2.tex}
\input{ch3-3.tex}
\input{ch3-4.tex}

\input{ch4-1.tex}

\input{ch4-3.tex}
\input{ch5.tex}

\input{ch5-1.tex}
\input{ch5-2.tex}

\input{ch5-3.tex}

\input{ch6.tex}

\section*{Acknowledgements}
This study was supported by JSPS Grant-in-Aid (KAKENHI) \#24K01002 and \#24K00999.

\appendix
\input{chx.tex}

\input{chxx.tex}

\input{chxxx.tex}

\input{chxxxx.tex}
\input{chxxxxx.tex}

\input{chxxxxxx.tex}

\bibliographystyle{abbrvnat0}
\bibliography{reference}

\end{document}

%% file: ch1.tex
\section{Introduction}

Network design problems (NDPs) are central to addressing inevitable uncertainties in long-term transportation infrastructure investment planning.
Such uncertainties include strategic-level uncertainties over years, such as travel demand growth and infrastructure deterioration, as well as operational-level uncertainties over shorter time units (e.g., within-day), such as fluctuations in daily demand and energy costs.
Most previous NDPs consider only operational-level uncertainties (particularly daily demand) to find optimal network improvements in a single period \citep{govindan2017supply}.
Although the resulting network design can alleviate risks such as excessive operational costs (e.g., total travel time) under uncertain daily demand spikes, 
society's increasing dependence on mobility requires sequential network improvements to satisfy demand at a desired service level.
Since such improvements usually involve significant and irreversible investments, it is important to find the long-term investment plan that optimally addresses estimated future demand growth.
\citet{goetz1997revisiting} and \citet{flyvbjerg2005accurate} reported that the classical practice of using point estimation based on historical data leads to significant economic losses.
These highlight the necessity of incorporating both strategic- and operational-level uncertainties into NDPs to capture the comprehensive risks inherent in long-term transportation planning.

Recent growth of emerging transportation services such as shared autonomous vehicle (SAV) systems (\citeauthor{narayanan2020shared}, \citeyear{narayanan2020shared}) motivates planners to re-envision network design.
This is because emerging services possess the capability of within-day dynamic vehicle control that exploits real-time observations of operational uncertainties.
Such dynamic operations alleviate the impacts of operational uncertainties, resulting in a reduction in required infrastructure investments to sustain performance even under daily demand spikes \citep{hua2019joint}.
Meanwhile, within-day operations alone are not enough to resolve the impacts of strategic uncertainties.
This can be supported by evidence that the entry of Transportation Network Companies led to more severe congestion in the US (cf. \citeauthor{diao2021impacts}, \citeyear{diao2021impacts}).
Moreover, emerging services usually require starting from scratch with little or no reliable data, and the future development of enabling technologies (e.g., vehicle electrification and automation) also remains highly uncertain, further amplifying strategic risks.
Therefore, toward emerging transportation services, it is urgent to revisit NDPs that incorporate within-day dynamic operations in order to address strategic and operational uncertainties.


Mitigating the impacts of uncertainties requires NDPs to complementarily incorporate reliability and flexibility \citep{vishnu2021development}.
\textit{Reliability}\footnote{Reliability of transportation networks must be distinguished into two dimensions: connectivity and time (cost) reliability \citep{asakura1991road}.
The former is generally used to evaluate reliability in abnormal situations (e.g., \citeauthor{nakayama2024topological}, \citeyear{nakayama2024topological});
the latter is appropriate for evaluating usual traffic conditions.
This study focuses on the latter, i.e., normal operating conditions where network connectivity remains intact.}
is the probability that a system achieves a specified performance \citep{chen2011transport}.
Reliability-based NDPs either maximize a reliability measure or maximize performance while introducing reliability as chance constraints.
The resulting network designs have risk-averse capabilities to hedge against severe scenarios at the sacrifice of expected performance.
\textit{Flexibility} is the ability of a system to adapt to external changes, while maintaining satisfactory system performance \citep{morlok2004measuring}.
Emerging transportation services can accommodate within-day uncertainty through dynamic vehicle control,
improving operations managers' flexibility in the sense of the taxonomy seen in \citet{ukkusuri2009multi}.
Similarly, providing strategic planners with flexibility requires a sequential decision-making process to change, delay, or abandon infrastructure investments from period to period (e.g., every year) based on realized uncertain strategic parameters \citep{ukkusuri2009multi}.
Traditional single-stage NDPs, which optimize network design at the beginning of all periods, cannot ensure such strategic flexibility; instead, this study addresses multi-stage NDPs that aim at optimizing sequencing network investments over time.
As \citet{vishnu2021development} pointed out, flexibility serves as a reactive capability to complement proactive counterparts like reliability.
In other words, incorporating both dimensions into an NDP enables planners to mitigate a reduction in expected performance while hedging against severe scenarios.

Unfortunately, flexible and reliable network design problems (FR-NDPs) face the following three methodological challenges: (1) \textit{curse-of-horizon}, (2) \textit{optimality}, and (3) \textit{time consistency}.
\begin{enumerate}
    \setlength{\itemsep}{0pt} \setlength{\parskip}{0pt}
    \item Evaluating reliability requires a probability distribution of {the optimal solution's performance}.
    Scenario-based stochastic programming is a typical optimization technique to find the optimal solution.
    However, the number of scenarios grows exponentially with the number of decision-making stages.
    This exponential growth renders the memory required by scenario-based stochastic programming computationally impracticable---known as the curse-of-horizon  \citep{fullner2025stochastic}---thereby forcing FR-NDPs to assume only a small number of possible uncertain levels and stages.
    \item 
    Recent studies have focused on scalable approaches, such as Approximate Dynamic Programming and Markov Decision Processes, for finding approximate solutions to multi-stage NDPs (e.g., \citeauthor{yoon2024sequential}, \citeyear{yoon2024sequential}; \citeauthor{rath2024deep}, \citeyear{rath2024deep}).
    Instead of enumerating scenarios, these approaches exploit Bellman's principle and recursively approximate the impact of future uncertainties in the form of state value functions, ensuring that the memory requirements grow only linearly with the number of stages.
    However, in general, the optimality of solutions is not theoretically guaranteed.
    This means that the approximate solutions may involve a risk of falling below the reliability desired by planners.
    \item FR-NDPs can be classified into a class of risk-averse sequential decision problems.
    Several authors have highlighted that time consistency is a desirable property for such a class of problems (e.g., \citeauthor{homem2016risk}, \citeyear{homem2016risk}).
    In the context of optimization\footnote{The definitions of time consistency in the literature differ mainly by their focus.
   	We focus on time consistency as defined in the optimization-oriented papers (e.g., \citeauthor{homem2016risk}, \citeyear{homem2016risk}; \citeauthor{dowson2025incorporating}, \citeyear{dowson2025incorporating}). },
    time consistency means that the optimal policy should exclude incentives to deviate from the ex-ante optimal policy at later dates \citep{schur2019time}.
    Informally, time-consistent planning ensures that decisions made today should agree with past plans for the scenario that actually occurred. 
    This notion can be interpreted as a kind of stability property, which is critical when formulating strategy-proof pricing schemes to recover the investment costs of infrastructure improvements \citep{lo2009time}.
    However, time consistency is not generally satisfied in reliable NDPs.
    For example, $\alpha$-reliable NDPs \citep{chen2007alpha} adopt the $(1-\alpha)$ percentile of performance as a risk measure, which has been pointed out as time-inconsistent \citep{kovacevic2009time}.
\end{enumerate}


To overcome these challenges simultaneously, we leverage a multi-stage stochastic programming (MSSP) approach.
An MSSP models sequential decision-making under uncertainty in a decision-observation-decision loop.
At each stage, a planner observes the current system state and the uncertainties that have just been realized (e.g., demand growth, daily demand, energy prices), makes decisions (e.g., capacity expansions, transit route extensions, fleet sizing/control) with the information currently available, and carries the system forward.
This structure is suitable for FR-NDPs, where long-term strategic decisions interleave with operational responses because (a) investments are irreversible and must be staged over a long horizon, so plans need the option to redirect later; (b) key components, such as demand, technology, and prices, unfold gradually, so investments must adapt as evidence accumulates; (c) reliability depends on the distribution of outcomes over time.
The MSSP approach also provides a tractable basis for stating global optimality and for embedding time-consistent risk measures, so that plans made earlier remain optimal once uncertainties resolve.

To solve the MSSP efficiently while meeting these requirements, we utilize stochastic dual dynamic programming (SDDP) \citep{pereira1991multi,shapiro2011analysis}.
SDDP applies Bellman’s principle to decompose the horizon into stagewise subproblems and iteratively constructs piecewise linear approximations of future cost (value) functions from dual information.
This algorithm avoids explicit scenario enumeration---addressing the curse-of-horizon---and, under mathematically tractable conditions such as convexity, provides the global optimum of the MSSP \citep{girardeau2015convergence,guigues2016convergence,guigues2023duality}. Furthermore, risk-averse extensions of SDDP accommodate time-consistent risk measures, preserving these guarantees and enabling reliable network design \citep{khatami2024risk,dowson2025incorporating}.


Building on the MSSP formulation, this study establishes a general framework to address FR-NDPs for emerging transportation services with both theoretical and computational tractability.
The proposed framework is designed to enable the incorporation of flexibility and reliability into conventional single-stage NDPs with only slight customizations.
Specifically, it alternately optimizes within-day dynamic vehicle control and period-to-period investment, while exploiting observations of realized operational and strategic uncertainties.
Each strategic and operational problem corresponds to decomposed subproblems of risk-averse MSSPs amenable to an SDDP-type algorithm, which guarantees the global optimality of the resulting network design.

The proposed framework is applied to two problems: (i) capacity expansion planning for SAV systems and (ii) sequential route design in integrated SAV systems with Bus Rapid Transit (BRT). 
In the former application, the mathematical tractability of the MSSP approach enables the derivation of theoretical relations between optimal infrastructure costs and toll strategies.
Finally, numerical experiments with the Midtown Manhattan case reveal the impacts of flexibility and reliability on strategic investments and their performance in comparison with risk-neutral single-stage NDPs.

Our contributions are listed as follows:
\begin{itemize}
    \setlength{\itemsep}{0pt} \setlength{\parskip}{0pt}
    \item We propose a general formulation of FR-NDPs solvable using SDDP with computationally practical memory requirements, even under a combinatorial explosion of scenarios.
    The proposed FR-NDPs can alleviate the impacts of significant and inevitable strategic uncertainties in the infrastructure investment planning process for emerging transportation services.
    \item We provide two types of sufficient conditions for strategic and operational subproblems to ensure convergence properties of FR-NDPs:
    (i) conditions ensuring that the deterministic lower bound for the optimal value almost surely converges to the global optimum and
    (ii) conditions enabling the computation of statistical and deterministic upper bounds for the optimal value.
    {The key contribution of these conditions is to dispense with a stagewise feasibility assumption that is standard in SDDP convergence theory but easily violated in NDPs.}
    \item Theoretical results on the self-financing principle for FR-NDPs are proved in the risk-neutral case of SAV systems {via the strong duality of multi-stage stochastic linear problems.}
    \item Our numerical experiments with a real-world case study demonstrate that
    {multi-stage planning and risk-averse evaluation can
    act complementarily in hedging severe scenarios while mitigating the loss of expected performance.}
\end{itemize}

The remainder of this paper is organized as follows.
Section \ref{sec:2} reviews related work on network design problems and stochastic dual dynamic programming.
Section \ref{sec:3} introduces the general formulation of the proposed FR-NDP framework, describes its theoretical properties, and provides sufficient conditions for convergence and optimality.
Section \ref{sec:4} presents two applications---to shared autonomous vehicle (SAV) capacity expansion and to sequential route design in integrated SAV-BRT systems---illustrating how to embed existing strategic decision models within the proposed framework.
Section \ref{sec:5} reports the results of numerical experiments on the Midtown Manhattan network, highlighting the impacts of flexibility and reliability on investment patterns, operational performance, and robustness under uncertainty.
Finally, Section 6 concludes the paper and discusses key directions.

%% file: ch2.tex
\section{Literature review}
\label{sec:2}

%% file: ch2-1.tex
\subsection{Network design problem}
\label{sec:review1}

NDPs aim to determine the optimal locations of new facilities or the capacity expansion of existing facilities within a transportation network \citep{friesz1985transportation}.
Over the years, NDPs have been classified along several dimensions (cf. \citeauthor{ukkusuri2009multi}, \citeyear{ukkusuri2009multi}; \citeauthor{farahani2013review}, \citeyear{farahani2013review}). 
Based on the nature of design decisions, NDPs can be classified into three groups: (1) discrete NDP which deals with discrete design decisions such as constructing new facilities, (2) continuous NDP is concerned with continuous design decisions such as capacity expansion of existing facilities, and (3) mixed NDP which contains a combination of discrete and continuous decisions.
Based on demand assumptions, NDPs are grouped into: (1) fixed demand NDP, (2) elastic demand NDP, (3) stochastic demand NDP, and (4) stochastic-elastic demand NDP. 
Based on the planning horizon, NDPs can be classified into: (1) single-stage NDP (one-shot investment), and (2) multi-stage NDP (multi-period investment planning).
The problem in this paper focuses on multi-stage NDPs under a broad class of uncertainties, including stochastic demand.

While numerous studies have addressed either multi-stage NDPs \citep{lo2004planning,szeto2006transportation,szeto2008time,lo2009time,hosseininasab2015integration,chen2016optimal,kumar2018simplified} or 
stochastic NDPs \citep{waller2001stochastic,chen2007alpha,lou2009robust,yin2009robust,chung2011robust,szeto2016reliable,chen2025sustainable}, few studies consider both aspects.
\citet{ahmed2003multi} developed a multi-stage investment model for capacity expansion under uncertainty.
A scenario-based stochastic integer programming problem is formulated to capture the evolution of uncertain demand and cost parameters.
\citet{ukkusuri2009multi} proposed the flexible network design problem that simultaneously handles multi-period investment sequencing and stochastic-elastic demand.
They express capacity expansion decisions in a scenario tree that allows planners to delay, modify, or abandon future investments in response to demand realizations, thereby capturing temporal flexibility.
However, the scenario-based approach leads to an exponential increase in problem size with respect to the length of the planning horizon.
Numerical experiments conducted in \citet{ukkusuri2009multi} evaluated a three-stage model with five types of branches at each stage.
Recently, scalable approximation methods such as reinforcement learning have been developed as alternatives to the scenario-based approach.
\citet{yoon2020contextual} proposed a reinforcement learning-based method to support sequential transit network design planning, where routes are deployed progressively in response to observed uncertain passenger demand.
A contextual multi-armed bandit algorithm is embedded within the route selection model to sequentially select the optimal route from a pre-enumerated set.
\citet{yoon2024sequential} addressed a similar problem by developing a knowledge gradient algorithm with correlated beliefs to capture the correlation between flows.
\citet{rath2024deep} proposed a machine learning-based framework for planning service area expansions in mobility-on-demand systems.
A case study in Brooklyn demonstrated that leveraging a recurrent neural network–based classifier to remove combinatorial complexity yielded a marked reduction in computation time.

{Distinct from the research stream aimed at computing optimal network designs, a recurring theoretical concept in the network design literature is the self-financing principle.
The principle characterizes the relationship between infrastructure capital costs and the revenue collected through congestion pricing.
Since the seminal work of \citet{mohring1962highway}, the principle has been extended to stochastic settings \citep{lindsey2009cost,lindsey2014cost} and to multi-stage network design problems \citep{lo2009time}.
More recently, the principle has been applied to emerging transportation services.
\citet{seo2024dynamic} proved that, in shared autonomous vehicle (SAV) systems, the revenue collected through optimal congestion tolls charged to SAVs coincides with the optimal infrastructure capital cost.
However, it remains unclear whether an analogous relation continues to hold when infrastructure investment and operational decisions are made sequentially under multi-stage uncertainty.}

%% file: ch2-4.tex
\subsection{Stochastic dual dynamic programming}
\label{sec:review2}

Stochastic Dual Dynamic Programming (SDDP) is a dynamic programming-inspired algorithm for multi-stage stochastic problems (MSSPs) introduced by \citet{pereira1991multi}.
The idea can be traced back to Benders' decomposition and its extension to the multi-stage case provided by \citet{birge1985decomposition}.
SDDP decomposes the whole MSSP into stagewise subproblems.
Each subproblem is an optimization problem that chooses the current decisions to minimize the current cost plus the expected future cost (also called the value function).
This value function is iteratively approximated with a set of piecewise linear functions called cuts, which are successively tightened.
By replacing explicit scenario enumeration with this cut-based representation, SDDP addresses the curse-of-horizon, the exponential increase in the number of scenarios with respect to the length of the planning horizon.
Under standard assumptions (e.g., linearity), SDDP yields the lower bounds of the value function with convergence to the global optimum in a finite number of iterations.
\citet{philpott2008convergence} proved that, given mild conditions, the SDDP procedure will converge with probability one to the optimal policy.
Subsequent analyses have extended the convergence proof to broader classes of programs: convex programs \citep{girardeau2015convergence}, and mixed-integer programs \citep{zou2019stochastic}.
For computational enhancement, recent research developed meta-learning approaches called Neural SDDP \citep{dai2021neural}, which can be integrated seamlessly with the existing SDDP-type algorithm.

Subsequent developments have extended SDDP along risk aversion \citep{shapiro2013risk}.
Dynamic formulations that propagate risk measures allow the planner to hedge tail events while preserving tractability.
Time-consistency, i.e., the absence of incentives to deviate ex post from an ex-ante optimal plan, can be ensured by carefully embedding these risk measures in the multi-stage recursion \citep{homem2016risk,dowson2025incorporating}.
\citet{guigues2016convergence} and \citet{dowson2025incorporating} proved the convergence properties of the risk-averse SDDP variants.

{
The convergence guarantees above are typically established under the assumption of relatively complete recourse (RCR).
The RCR assumption requires every stagewise subproblem to remain feasible for any state inherited from earlier decisions.
Because RCR is frequently violated in practice, particularly when strategic decisions can render operational subproblems infeasible, a line of research has sought to relax it. 
The relaxation approaches are divided into the following two remedies.
The first augments the standard optimality cuts with feasibility cuts that eliminate infeasible states during the iterative process.
\citet{guigues2016convergence} develops such cuts for risk-averse multi-stage stochastic linear problems (MSSLPs) and proves almost sure convergence without the RCR assumption.
Because these cuts are typically derived from Farkas certificates of linear infeasibility, extending this approach to mixed-integer or nonlinear subproblems is not straightforward.
The second relaxes feasibility through slack variables and penalization, replacing the problematic constraints by penalty terms.
Under exact-penalty conditions the penalized problem recovers the original optimum for a finite penalty (cf. \citealp{lefebvre2024exact}).
For risk-neutral MSSLPs, \citet{guigues2023duality} analyze a penalized Dual SDDP variant when the dual recursion violates the RCR assumption.
For risk-neutral multi-stage stochastic mixed-integer nonlinear problems, \citet{zhang2022stochastic} develop an SDDP-type regularization framework that can attain global optimality without the RCR assumption.
However, to the best of our knowledge, existing convergence analyses without the RCR assumption are mainly limited to risk-neutral stochastic programs and risk-averse MSSLPs.
Sufficient conditions guaranteeing lower-bound convergence by the SDDP algorithm in risk-averse and mixed-integer convex settings remain largely unexplored.

To assess solution quality and implement gap-based stopping criteria, an upper bound on the optimal value is needed in addition to the lower bound produced by SDDP. 
In the risk-neutral setting, the standard device is a statistical upper bound obtained by Monte Carlo evaluation of the policy induced by the current lower approximations of the value functions (cf. \citealp{shapiro2011analysis}).
This standard approach has two limitations.
First, although it is well established for risk-neutral objectives, its extension to risk-averse settings is more delicate because the risk-adjusted objective is no longer a simple expectation of simulated total costs.
It requires a construction tailored to the chosen risk measure and its dynamic representation. 
\citet{kozmik2015evaluating} studied policy evaluation for risk-averse MSSLPs with the Conditional Value-at-risk and \citet{guigues2023risk} constructed a statistical upper bound for risk-averse convex optimal control with convex and monotone risk measures.
Second, this upper bound is statistical rather than deterministic.
Because the statistical bound relies on asymptotic normal approximations, a reliable bound may require many Monte Carlo simulations.
Deterministic upper bounds therefore require different constructions.
\citet{guigues2023duality} developed Dual SDDP for dual problems of risk-neutral MSSLPs, and 
\citet{da2023dual} extended Dual SDDP to risk-averse MSSLPs with polyhedral risk measures.}

Although SDDP has seen extensive application in the energy sector (e.g., \citeauthor{pereira1991multi}, \citeyear{pereira1991multi}) and has been adopted in finance (e.g., \citeauthor{lee2023large}, \citeyear{lee2023large}), production management (e.g., \citeauthor{schmiedel2025strategic}, \citeyear{schmiedel2025strategic}), and agriculture (e.g., \citeauthor{dowson2019multi}, \citeyear{dowson2019multi}), the applications to strategic decisions on transportation network design remain limited.
\citet{hua2019joint} proposed an integrated optimization framework for joint infrastructure planning and fleet management in one-way electric car-sharing systems.
\citet{kawase2024multi} presented a multi-stage stochastic linear problem to jointly determine network capacity planning and traveler-vehicle assignment in SAV systems.
Their optimization problems consist of the first stage problem to determine infrastructure planning (e.g., capacity expansion and charging station location), followed by the subsequent stages of sequentially determining the vehicle dispatch plan.
\cite{yu2021value,yu2024value} addressed multi-stage facility location and capacity expansion planning with risk aversion.
A risk-averse multi-stage stochastic problem was formulated with expected conditional risk measures to capture risk in a time-consistent manner. 
Operational performance induced by strategic decisions was evaluated via a static network flow problem.

%% file: ch2-5.tex
\subsection{Originality of this study}

There has been little effort to address both multi-stage decision-making and uncertainties for transportation network design.
Conventional multi-stage stochastic NDPs rely on scenario trees, which suffer from exponential growth as the length of the planning horizon increases. 
Although learning-based surrogates mitigate this computational burden, their efficiency generally comes at the expense of optimality guarantees, failing to be time-consistent and meet the reliability desired by planners.
Furthermore, to our knowledge, a risk-averse, multi-stage NDP has not been developed, leaving a research gap at the intersection of flexible investment planning and reliable decision-making.

From a transportation modeling perspective, the originality of this study lies in filling this gap by formulating a general class of flexible and reliable network design problems (FR-NDPs) as risk-averse MSSPs solvable with SDDP.
{
By combining the network-design modeling reviewed in Section~\ref{sec:review1} with the convergence analysis established in the SDDP literature reviewed in Section~\ref{sec:review2}, the proposed framework simultaneously retains multi-stage investment flexibility, time-consistent reliability evaluation, and optimality guarantees.
In particular, the self-financing result derived under multi-stage uncertainty in Section~4.1 is a theoretical outcome obtained by applying the strong duality of stochastic linear programs.
This constitutes a distinctive theoretical contribution made possible by integrating insights from both stochastic programming and transportation theory, through a methodologically grounded framework with optimality guarantees.
}

Despite its success in other domains, SDDP has seen only limited application in transportation network design. Notably, existing SDDP applications to strategic decisions for emerging transportation services (e.g., \citeauthor{hua2019joint}, \citeyear{hua2019joint}; \citeauthor{kawase2024multi}, \citeyear{kawase2024multi}) address single-stage NDP, and thus do not engage with sequential investment planning. Risk-averse multi-stage models for facility location/capacity expansion, proposed by \citet{yu2021value,yu2024value}, can be viewed as special cases of our FR-NDP framework; however, they abstract away within-day operational dynamics, potentially underestimating the risk-mitigation effect due to operational flexibility. Our FR-NDP is designed so that existing risk-neutral single-stage NDPs can be embedded with only minor customization, thereby incorporating flexibility and reliability into the network design.

{
From a stochastic programming perspective, our main methodological contribution is to establish sufficient conditions, presented in Section~3.3.1.1, under which the lower bound produced by SDDP converges almost surely to the global optimum in risk-averse, mixed-integer convex settings without the relatively complete recourse (RCR) assumption.
In transportation network design, RCR is not merely a technical condition. 
A strategic decision, such as withholding capacity expansion or not installing a transit route, can render later operational subproblems infeasible under some demand realizations.
Furthermore discrete decisions of whether or not to build are typical in network design.
As discussed in Section~\ref{sec:review2}, existing convergence results that dispense with RCR are confined either to risk-averse MSSLPs or to risk-neutral MSSPs, leaving the risk-averse, mixed-integer convex case unexplored. 
This study fills this gap by restricting the class of risk measures to those satisfying conditional consistency, which admits a deterministic-equivalent reformulation with an exact-penalty argument for risk-averse, mixed-integer convex MSSPs.
Conditional consistency further yields a complementary benefit.
Section~3.3.1.2 provides sufficient conditions for computing a statistical upper bound and for obtaining a deterministic upper bound through the Dual SDDP algorithm.
This contribution is positioned as an extension of established results regarding upper bounds to a class of risk measures that has not previously been studied.}

%% file: ch3-1.tex
\section{Flexible and reliable network design problems}
\label{sec:3}

This section introduces a general formulation of FR-NDPs and explains sufficient conditions for their convergence with the SDDP algorithm.
We first illustrate a typical problem statement related to vehicle operations and strategic investments for emerging transportation services.
Our formulation is expressed in an abstract functional form to accommodate a broad class of FR-NDPs.
While such abstraction is instrumental in developing a general theory, it can hinder intuitive understanding and practical interpretation of the proposed framework.
Section \ref{sec:3.1} aims to present a specific illustration bridging to the subsequent abstract description.
It should be noted that the proposed framework is not confined to the specific setting illustrated here.

\subsection{Typical problem statement}
\label{sec:3.1}

We consider a class of transportation systems characterized by on-demand pickup and drop-off operations tailored to travelers' requests.
Travelers specify their demand profiles, including their origin, destination, and desired arrival time, and then receive transportation services accordingly.
Such demand-responsive systems, which include highly flexible services (e.g., SAVs) and high-capacity, low-flexibility services (e.g., fixed-route buses), are increasingly recognized as central components of emerging transportation services.
The infrastructure design, including roads and parking lots, for such systems has attracted significant attention due to their potential to enhance the responsiveness of emerging transportation services.

The system involves strategic and operational costs to satisfy travel demand at a desired service level.
The former includes infrastructure investment and maintenance costs, while the latter includes travelers' travel and schedule costs, vehicle mileage, and, in applications such as electric vehicles, energy costs.
Just as energy costs are typically categorized as operational costs, the combination of strategic and operational costs comprehensively represents the costs associated with emerging transportation services.

We consider a single decision-maker (planner) who aims to reduce the total social costs and makes strategic and operational decisions on emerging transportation services.
Strategic decisions involve infrastructure investments for facility construction and capacity expansion.
During the planning horizon, infrastructures could deteriorate, whereas mobility dependence on emerging transportation services could increase.
To address the incongruence due to such long-term changes in supply and demand, the planner has the flexibility to invest now or delay investments, from period to period (e.g., every year).
Operational decisions, including dynamic vehicle relocation, routing, and ride-sharing matching, are updated more frequently
to satisfy travel demand under capacity constraints resulting from past investments.
The planner makes optimal decisions to reduce the total social cost, while considering the essential interaction between strategic and operational levels.

Uncertainties inherent in the emerging transportation system complicate the planner's strategic and operational decision-making process.
The system involves strategic uncertainties, such as demand growth and infrastructure deterioration, and operational uncertainties, such as fluctuations in daily demand and energy costs.
Strategic uncertainties, such as demand growth, can often be inferred from travel surveys, whereas daily demand can be monitored through travelers' requests.

To mitigate the impacts of uncertainties, the planner's decision timeline is nested as follows: annually or every few years, the planner revisits long-term infrastructure investments in response to the realized strategic uncertainties.
Between two consecutive strategic reviews, the planner repeatedly observes the realized operational uncertainties and implements corresponding operational decisions.
After this sequence of operational decisions, the planner again observes the updated system state and revisits the strategic decisions.
The planner repeats this cycle during the specified planning horizon to minimize the total expected social cost.
Alternatively, the planner may adopt a risk measure that explicitly penalizes severe scenarios instead of the expectation-based criterion, if significant losses are of particular concern.

%% file: ch3-2.tex
\subsection{General problem statement and formulation}

This subsection presents a general formulation of FR-NDPs for emerging transportation services.
We consider a discrete planning horizon of length $\tau$ [period].
Each period comprises a strategic stage and an operational stage.
We denote the number of stages within the planning horizon as $S=2\tau$, 
set of all stages as $\mathcal{S}=\{1,...,S\}$, 
and sets of strategic and operational stages as $\mathcal{S}^{\rm F}=\{1,3,...,S-1\}$ and $\mathcal{S}^{\rm G}=\{2,4,..,S\}$, respectively.
Throughout this paper, to avoid notational complexity, the set of superscripts ${\rm F}$ and ${\rm G}$ is denoted as ${{\rm F}/{\rm G}}$.

At each stage, a single planner observes realizations of uncertainties and chooses decisions.
Strategic and operational uncertainties, $\bm{\xi}^{{\rm F}/{\rm G}}_s$,
are revealed at the beginning of the corresponding stage $s\in\mathcal{S}$.
Subsequently, strategic and operational decision variables, $\bm{u}^{{\rm F}/{\rm G}}_s$, are optimized to minimize current and future costs in response to the corresponding realized uncertainties.
Figure \ref{fig:process} illustrates the decision-making process in the proposed framework.

We assume that the system remains steady-state within each stage.
Specifically, for each stage $s\in\mathcal{S}$, the set of possible realizations of the uncertain parameter and the probability assigned to each realization are fixed.
They do not change over the course of the sub-time steps within the stage.
For example, daily demand at an operational stage $s$ follows a probability distribution, and any shift in this distribution, such as demand growth, is captured at the beginning of the next strategic stage $s+1$.
In addition, the infrastructure state also stays unchanged within each operational stage $s$.
Any changes in infrastructure state due to expansion or deterioration occur at the subsequent strategic stage $s+1$.

Under these assumptions, we define the system state at stage $s\in\mathcal{S}$ as $\bm{x}_s$.
According to \citet{powell2019unified} and \citet{dowson2020policy},
$\bm{x}_s$ captures all the information necessary to model the system after stage $s+1$.
In the context of our problem, typical components of $\bm{x}_s$ include network capacities and demand growth.
$\bm{x}_s$ evolves according to the decision variables $\bm{u}^{{\rm F}/{\rm G}}_{s}$ and the realized uncertainties $\bm{\xi}^{{\rm F}/{\rm G}}_s$.
{
Because the system state summarizes all the information needed to make optimal decisions in subsequent stages, we focus on decision rules that select $\bm{u}_s^{{\rm F}/{\rm G}}$ based on the incoming state $\bm{x}_{s-1}$ and the realized uncertainty $\bm{\xi}_s^{{\rm F}/{\rm G}}$ at stage $s$.
Following \citet{powell2019unified} and \citet{dowson2020policy}, we refer to the resulting sequence of state-contingent decision rules as a policy.
}

\begin{figure}[t]
	\centering
	\includegraphics[width=1.0\textwidth]{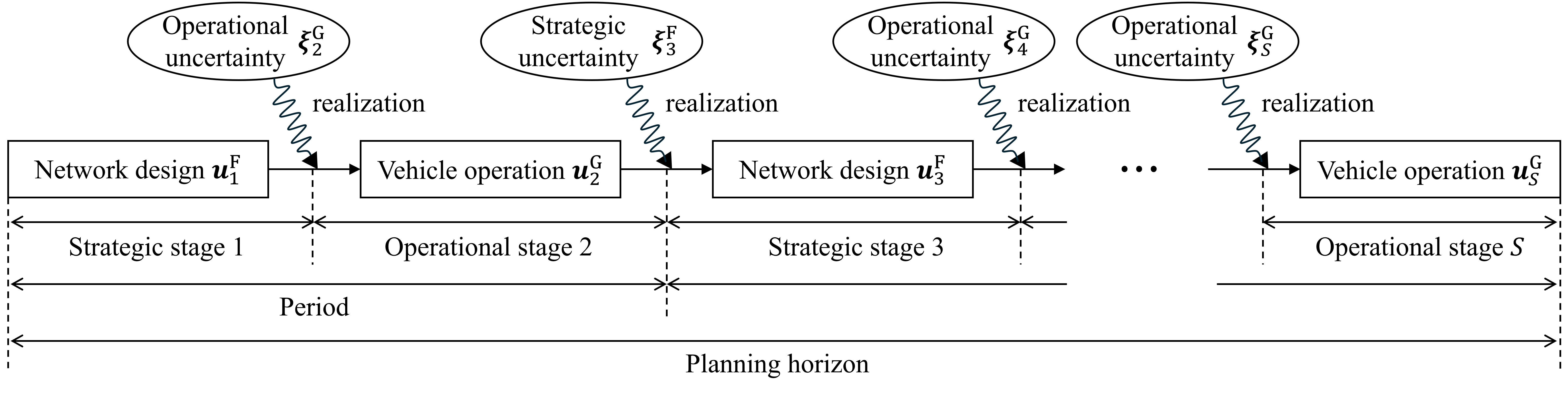}
	\caption{Schematic diagram of the decision-making process in the proposed framework.}
	\label{fig:process}  
\end{figure}

The general formulation of this FR-NDP is expressed as the following risk-averse MSSP:
\begin{flalign}
	\nonumber
	&\min_{\bm{x}_1,\bm{u}_1^{\rm F}}C^{\rm F}_1( \bm{x}_{1}, \bm{u}^{\rm F}_{1}, \bm{\xi}^{\rm F}_{1} )+\mathbb{F}^{}_{\bm{\xi}^{\rm F}_1}\Biggl[\min_{\bm{x}_2,\bm{u}_2^{\rm G}}C^{\rm G}_2( \bm{x}_{2}, \bm{u}^{\rm G}_{2}, \bm{\xi}^{\rm G}_{2} )+\mathbb{F}^{}_{\bm{\xi}^{\rm G}_2}\Biggl[\min_{\bm{x}_3,\bm{u}_3^{\rm F}}C^{\rm F}_3( \bm{x}_{3}, \bm{u}^{\rm F}_{3}, \bm{\xi}^{\rm F}_{3} )+\mathbb{F}^{}_{\bm{\xi}^{\rm F}_3}\Biggl[\cdots&\\
	\label{eq:Generic_Objective}
	&
	~~~~~~~~~~~~~~~~~~~~~~~~~~~~~~+\mathbb{F}^{}_{\bm{\xi}^{\rm G}_{S-2}}\Biggl[\min_{\bm{x}_{S-1},\bm{u}_{S-1}^{\rm F}}C^{\rm F}_{S-1}( \bm{x}_{S-1}, \bm{u}^{\rm F}_{S-1}, \bm{\xi}^{\rm F}_{S-1} )+\mathbb{F}^{}_{\bm{\xi}^{\rm F}_{S-1}}\left[\min_{\bm{x}_S,\bm{u}_S^{\rm G}}C^{\rm G}_S( \bm{x}_{S}, \bm{u}^{\rm G}_{S}, \bm{\xi}^{\rm G}_{S} )\right]\Biggr]\Biggr]\Biggr]\Biggr],&
\end{flalign}
\vspace{-2ex}
\begin{flalign}
        \label{eq:Generic_Transition}
	&{\rm s.t.~~} \bm{x}_s=
	\begin{cases}
		T_s^{\rm F}( \bm{x}_{s-1}, \bm{u}^{\rm F}_{s}, \bm{\xi}^{\rm F}_{s} ) & {\rm if~}s\in\mathcal{S}^{\rm F}\\
		T_s^{\rm G}( \bm{x}_{s-1}, \bm{u}^{\rm G}_{s}, \bm{\xi}^{\rm G}_{s} ) & {\rm if~}s\in\mathcal{S}^{\rm G}
	\end{cases}&\forall s\in\mathcal{S},\\
        \label{eq:Generic_Strategic_Action}
	& ~~~~~~~\bm{u}^{\rm F}_s \in U_s^{\rm F}(\bm{x}_{s}, \bm{\xi}^{\rm F}_{s}) & \forall s\in\mathcal{S}^{\rm F},\\
        \label{eq:Generic_Operational_Action}
	& ~~~~~~~\bm{u}^{\rm G}_s \in U_s^{\rm G}(\bm{x}_{s}, \bm{\xi}^{\rm G}_{s}) & \forall s\in\mathcal{S}^{\rm G},\\
	\label{eq:initial_condition}
	& ~~~~~~~\bm{x}_{0}=\hat{\bm{x}}_{0},
\end{flalign}
where $\mathbb{F}^{}_{\bm{\xi}}[\cdot]$ is defined as a conditional risk measure on $\bm{\xi}$; it can describe a planner's risk preference.
$C^{{\rm F}/{\rm G}}_s$ are cost functions incurred at stage $s$, mapping decisions $\bm{u}^{{\rm F}/{\rm G}}_{s}$, states $\bm{x}_{s}$, and realized uncertainties, $\bm{\xi}^{{\rm F}/{\rm G}}_s$, to scalars.
$T_s^{{\rm F}/{\rm G}}$ represent transition functions mapping the incoming state $\bm{x}_{s-1}$ at the beginning of stage $s$ to a random outgoing state $\bm{x}_{s}$ at the end of stage $s$.
$\hat{\bm{x}}_{0}$ is a given initial state.
$U_s^{{\rm F}/{\rm G}}$ represent action spaces for decision variables.
These functions can capture features in transportation systems.
Some examples are listed in Table \ref{tab:example}.

\begin{table}[t]
 	\centering
	\caption{Examples of features in transportation systems that can be expressed by $T$ and $U$.}
	\label{tab:example}  
        \begin{tabular}{lll}
        \hline
        stage             & function      & explainable feature                                                                                          \\ \hline
                          &               & network capacity expansion                                                                                   \\
                          & $T_s^{\rm F}$ & infrastructure deterioration                                                                   \\
        strategic stage   &               & travel demand growth                                                                                   \\ \cline{2-3} 
                          & $U_s^{\rm F}$ & \begin{tabular}[c]{@{}l@{}}budget constraint\\ connection constraint for public transit routes\end{tabular}   \\ \hline
        operational stage & $U_s^{\rm G}$ & \begin{tabular}[c]{@{}l@{}}network flow dynamics\\ capacity constraint\\ time window constraint\end{tabular} \\ \hline
        \end{tabular}
\end{table}

According to Bellman's principle, Eqs. (\ref{eq:Generic_Objective})--(\ref{eq:initial_condition}) can be decomposed into the following dynamic programming equations:
\begin{flalign}
	\nonumber
	&\underline{\rm[Strategic~subproblems]}&\\
	\label{eq:Strategic_Objective}
	&V_s(\bm{x}_{s-1}, \bm{\xi}^{\rm F}_{s}) = \min_{\bm{x}_s,\bm{u}_s^{\rm F}}C^{\rm F}_s( \bm{x}_{s}, \bm{u}^{\rm F}_{s}, \bm{\xi}^{\rm F}_{s} )+\mathbb{F}^{}_{\bm{\xi}^{\rm F}_s}[V_{s+1}(\bm{x}_{s}, \bm{\xi}^{\rm G}_{s+1})] & \forall s\in\mathcal{S}^{\rm F},\\
	\label{eq:Strategic_Transition}
	&{\rm s.t.~~} \bm{x}_s=T_s^{\rm F}( \bm{x}_{s-1}, \bm{u}^{\rm F}_{s}, \bm{\xi}^{\rm F}_{s} ), & \forall s\in\mathcal{S}^{\rm F},\\
	\label{eq:Strategic_Action}
	& ~~~~~~~\bm{u}^{\rm F}_s \in U_s^{\rm F}(\bm{x}_{s}, \bm{\xi}^{\rm F}_{s}) & \forall s\in\mathcal{S}^{\rm F},\\
	& ~~~~~~~\bm{x}_{0}=\hat{\bm{x}}_{0},\tag{\ref{eq:initial_condition}}&
\end{flalign}
\vspace{-4ex}
\begin{flalign}
	\nonumber
	&\underline{\rm[Operational~subproblems]}&\\
	\label{eq:Operational_Objective}
	&V_s(\bm{x}_{s-1}, \bm{\xi}^{\rm G}_{s}) = \min_{\bm{x}_s,\bm{u}_s^{\rm G}}C^{\rm G}_s( \bm{x}_{s}, \bm{u}^{\rm G}_{s}, \bm{\xi}^{\rm G}_{s} )+\mathbb{F}^{}_{\bm{\xi}^{\rm G}_s}[V_{s+1}(\bm{x}_{s}, \bm{\xi}^{\rm F}_{s+1})] & \forall s\in\mathcal{S}^{\rm G},\\
	\label{eq:Operational_Transition}
	&{\rm s.t.~~} \bm{x}_s=T_s^{\rm G}( \bm{x}_{s-1}, \bm{u}^{\rm G}_{s}, \bm{\xi}^{\rm G}_{s} ), & \forall s\in\mathcal{S}^{\rm G},\\
	\label{eq:Operational_Action}
	& ~~~~~~~\bm{u}^{\rm G}_s \in U_s^{\rm G}(\bm{x}_{s}, \bm{\xi}^{\rm G}_{s}) & \forall s\in\mathcal{S}^{\rm G},\\
	\label{eq:terminal_condition}
	& ~~~~~~~V_{S+1}(\bm{x}^{}_{S},\cdot)=0,&
\end{flalign}
where $V_s(\bm{x}_{s-1}, \bm{\xi}^{{\rm F}/{\rm G}}_{s})$ denotes the value function:
given the state $\bm{x}_{s-1}$ and the realized uncertainty $\bm{\xi}^{{\rm F}/{\rm G}}_{s}$,
$V_s(\bm{x}_{s-1}, \bm{\xi}^{{\rm F}/{\rm G}}_{s})$ is the minimal risk-adjusted cumulative cost from stage $s$ to the end of the horizon.
Obviously, in the special case of $S=2$ (i.e., one strategic subproblem only followed by one operational subproblem),
Eqs. (\ref{eq:initial_condition})--(\ref{eq:terminal_condition}) can be reduced to a single-stage NDP.
This observation indicates that the proposed framework can extend conventional NDPs
to incorporate flexibility and reliability simply by embedding single-stage NDP structures into
the functions $C^{{\rm F}/{\rm G}}_s$, $T^{{\rm F}/{\rm G}}_s$, and $U^{{\rm F}/{\rm G}}_s$.

%% file: ch3-3.tex
\subsection{Convergence property}
\label{sec: convergence}

A dynamic programming equation, as in Eqs. (\ref{eq:initial_condition})--(\ref{eq:terminal_condition}), is solved when the value functions $V_s(\cdot, \cdot)$ are identified.
{Given the value functions, the optimal policy is induced by selecting the optimal solution to the subproblem over each stage $s\in\mathcal{S}$.
}
A classical approach is to recursively compute the value function at a set of discretized points with respect to the state space.
However, this method is limited to low-dimensional problems, as the number of discretization points grows exponentially with the dimensionality of the state space.

Stochastic Dual Dynamic Programming (SDDP) \citep{pereira1991multi,shapiro2011analysis} provides a theoretically sound alternative for solving MSSPs with practical memory requirements.
Rather than evaluating the value functions at discretized points, SDDP forms a lower approximation of each value function using a set of piecewise linear functions, referred to as cuts.
These cuts are generated iteratively by solving dual
subproblems and refined over multiple iterations.
As the number of iterations increases, the set of cuts becomes richer, improving the approximation and tightening the lower bound for the value functions.
Furthermore, when the MSSP has a specific form (e.g., convexity), SDDP ensures that the lower bound almost surely converges to the global optimum \citep{guigues2016convergence}.
{{Complementing the lower bound, upper bounds are useful for assessing solution quality and for implementing stopping criteria.
A statistical upper bound can be computed by Monte Carlo evaluation of the policy induced by the current lower approximations of the value functions.
Under more restrictive assumptions (e.g., linearity), a deterministic upper bound with convergence guarantees can also be obtained via a variant of SDDP, referred to as Dual SDDP \citep{guigues2023duality}.
For completeness, Appendix \ref{app:sddp0} reviews established results on SDDP-type algorithms and their convergence properties.
}}

Building on the above discussion, this subsection establishes sufficient conditions under which SDDP guarantees convergence to the global optimum for the FR-NDPs (\ref{eq:initial_condition})--(\ref{eq:terminal_condition}).
{
Subsection \ref{subsec:convergence} first lists the technical notions used throughout this subsection, and then states the sufficient conditions relevant to the lower and upper bounds.} 
We further discuss the proposed sufficient conditions that cover a broad class of FR-NDPs towards emerging transportation services.

\subsubsection{Sufficient conditions for convergence}
\label{subsec:convergence}

{
This subsection provides sufficient conditions under which SDDP provides convergence guarantees for the FR-NDP (\ref{eq:initial_condition})--(\ref{eq:terminal_condition}).
We first present sufficient conditions guaranteeing almost sure convergence of the lower bounds generated by SDDP.
We then turn to upper bounds and state sufficient conditions under which
(i) statistical upper bounds can be estimated via Monte Carlo evaluation of the current policy and
(ii) deterministic upper bounds with convergence guarantees can be generated via Dual SDDP.
}

{
Before stating the sufficient conditions, we define the technical notions used throughout this subsection and briefly explain why they are needed.
The convergence analysis hinges on two structural ingredients.
First, SDDP constructs valid supporting cuts of the value functions from dual information, which relies on a convex structure in each subproblem and on strong duality guaranteed by an appropriate constraint qualification\footnote{Formally, we assume strict feasibility of the continuous relaxation; the precise statement is provided in Appendix \ref{app:prop1}}.
This motivates the requirements that the feasible sets be mixed integer convex and that the stage costs be convex.
Second, the dynamic programming equations incorporate risk aversion through stagewise conditional risk measures.
To ensure that the risk-adjusted value functions remain amenable to cut-based approximation, a convexity requirement for the risk measures is natural. 
Beyond convexity, we impose conditional consistency to prevent discrepancies between (i) the stagewise risk evaluation with conditional risk measures and (ii) the overall risk evaluation of the total accumulated cost over the planning horizon.
This property plays a central role both in establishing almost-sure convergence of deterministic bounds and in constructing statistical upper bounds via Monte Carlo evaluation. 
We introduce two representative risk specifications that satisfy these requirements: the entropic risk measure, a canonical conditionally consistent example, and expected conditional risk measures (ECRMs), which can restore conditional consistency for popular risk measures such as the conditional value-at-risk.
Note that we restrict attention to random variables for which the expectations and risk measures appearing below are well defined.
}

{
\begin{dfn}[Convex risk measure; \citealp{follmer2002convex}]
	\label{def:convex_risk}
	For random variables $Y$ and $Z$, a risk measure $\mathbb{F}$ is convex if it satisfies the following axioms:
	\begin{itemize}
		\setlength{\itemsep}{0pt} \setlength{\parskip}{0pt}
		\item Axiom 1. Monotonicity: if $Y\le Z$, then $\mathbb{F}[Y]\le\mathbb{F}[Z]$.
		\item Axiom 2. Translation invariance: $\mathbb{F}[Z+a]=\mathbb{F}[Z]+a$ for all $a\in\mathbb{R}$.
		\item Axiom 3. Convexity: $\mathbb{F}[aY+(1-a)Z]\le a\mathbb{F}[Y] + (1-a)\mathbb{F}[Z]$  for all $a\in[0,1]$.
	\end{itemize}
\end{dfn}
\begin{dfn}[Conditional consistency\footnote{Variations of conditional consistency have been defined and discussed in several papers (e.g., \citeauthor{kovacevic2009time}, \citeyear{kovacevic2009time}; \citeauthor{dowson2025incorporating}, \citeyear{dowson2025incorporating}).
For tractability, we choose the definition of conditional consistency following \citet{fullner2025stochastic}.}]
\label{def:consistency_risk}
	For a sequence of random variables $\{Z_s\}_{s=1}^S$, a risk measure $\mathbb{F}$ is conditionally consistent if it holds that
	\begin{flalign}
		\label{eq:CC}
		&\mathbb{F}[Z_1+Z_2+\cdots + Z_S]=\mathbb{F}[Z_1+\mathbb{F}_{Z_1}[Z_2+\cdots+\mathbb{F}_{Z_{S-1}}[Z_S]]].&
	\end{flalign}
	The left-hand side of Eq.~\eqref{eq:CC} is called the end-of-horizon risk, whereas the right-hand side of Eq. \eqref{eq:CC} is called the nested risk.
\end{dfn}
\begin{dfn}[Conditional value-at-risk; \citealp{rockafellar2002conditional}]
	Given $\alpha\in[0,1)$ and a random variable $Z$, the conditional value-at-risk ($\mathbb{CV@R}_{\alpha}$) is defined by 
	\begin{flalign}
		\label{eq:CVaR}
		&\mathbb{CV@R}_{\alpha}[Z] := \min_{\eta\in\mathbb{R}}\left\{\eta + \frac{1}{1-\alpha}\mathbb{E}[(Z-\eta)_{+}]\right\},&
	\end{flalign}
	where $(Z-\eta)_{+} = \max\{Z-\eta, 0\}$.
	In the limiting cases, $\mathbb{CV@R}_{0}[Z] = \mathbb{E}[Z]$ and $\lim_{\alpha\rightarrow1}\mathbb{CV@R}_{\alpha} = \operatorname{ess}\sup[Z]$, where $\operatorname{ess}\sup[Z]$ denotes the essential supremum of a random variable $Z$.
\end{dfn}
\begin{dfn}[Entropic risk measure; \citealp{kovacevic2009time}]
	\label{def:entropic_risk}
	Given $\gamma>0$ and a random variable $Z$, the entropic risk measure ($\mathbb{ENT}_{\gamma}$) is defined by 
	\begin{flalign}
		\label{eq:ENT}
		&\mathbb{ENT}_{\gamma}[Z] := \frac{1}{\gamma}\log\left(\mathbb{E}[\exp(\gamma Z)]\right),&
	\end{flalign}
	where $\lim_{\gamma\rightarrow 0}\mathbb{ENT}_{\gamma}[Z] = \mathbb{E}[Z]$ and $\lim_{\gamma\rightarrow\infty}\mathbb{ENT}_{\gamma}[Z] = \operatorname{ess}\sup[Z]$.
\end{dfn}
\begin{dfn}[Expected conditional risk measure; \citealp{homem2016risk}]
	\label{def:ECRM}
	Given a sequence of random variables $\{Z_s\}_{s=1}^S$ and stagewise conditional risk measures $\{\mathbb{F}_{Z_s}\}_{s=1}^{S-1}$, 
	the expected conditional risk measure ($\mathbb{ECRM}$) is a multi-period risk measure, mapping $\{Z_s\}_{s=1}^S$ to a real number, defined by 
	\begin{flalign}
		\label{eq:ECRM}
		&\mathbb{ECRM}[Z_1,...,Z_S]:=Z_1 + \mathbb{F}_{Z_1}\left[Z_2\right] + \mathbb{E}_{Z_1}\left[ \mathbb{F}_{Z_2}[Z_3] + \mathbb{E}_{Z_2}\left[ \mathbb{F}_{Z_3}[Z_4] + \cdots + \mathbb{E}_{Z_{S-2}}\left[  \mathbb{F}_{Z_{S-1}}\left[Z_S\right] \right]\right]\right].&
	\end{flalign}
	Equivalently, by repeated application of linearity and the tower property of conditional expectation,
	\begin{flalign}
		\label{eq:ECRM2}
		&\mathbb{ECRM}[Z_1,...,Z_S]=\mathbb{E}\left[Z_1 + \mathbb{F}_{Z_1}\left[Z_2\right] + \mathbb{F}_{Z_2}[Z_3] + \mathbb{F}_{Z_3}[Z_4] + \cdots + \mathbb{F}_{Z_{S-1}}\left[Z_S\right] \right].&
	\end{flalign}
\end{dfn}
In optimization settings, it is useful to consider convex combinations of the expectation and a risk measure, $(1-\beta)\mathbb{E}[Z] + \beta \mathbb{F}[Z]$, where $\beta\in[0,1]$.
For example, the convex combination of $\mathbb{CV@R}_{\alpha}[\cdot]$ and $\mathbb{E}[\cdot]$, referred to as mean-$\mathbb{CV@R}_{\alpha}$, is strictly monotone and can yield desirable optimization properties such as solution uniqueness in some settings.
}

{
With these definitions in place, we first state sufficient conditions for almost sure convergence of lower bounds.
We then provide sufficient conditions for computing statistical upper bounds via Monte Carlo evaluation and for obtaining convergent deterministic upper bounds.
Throughout this subsection, we assume that the constraint qualification stated in Appendix \ref{app:prop1} holds.
}

\paragraph{Sufficient conditions relevant to lower bounds}

Building on the literature on convergence proofs for risk-averse MSSPs (e.g., \citealp{guigues2016convergence}), sufficient conditions for solving the FR-NDP (\ref{eq:initial_condition})--(\ref{eq:terminal_condition}) with convergence guarantees using SDDP in a finite number of iterations are given by the following proposition:
\begin{prop}
    \label{prop:suff}
    The sufficient conditions for the lower bound of the value function {$V_1$} in Eqs. (\ref{eq:initial_condition})--(\ref{eq:terminal_condition}) to converge almost surely { and non-decreasingly} to the global optimum using the SDDP algorithm in a finite number of iterations are as follows:
    \begin{enumerate}
    	\setlength{\itemsep}{0pt} \setlength{\parskip}{0pt}
		\item Given fixed $\bm{\xi}_s$, {the cost function} $C_s$ is {lower semicontinuous, proper, and} convex, {the transition function} $T_s$ is {linear}, and {the action space} $U_s$ is a {non-empty compact mixed integer} convex set for all stages $s\in\mathcal{S}$,
    	\item {The conditional risk measure} $\mathbb{F}^{}_{\bm{\xi}_{s}}$ is conditionally consistent and convex  for all stages $s\in\mathcal{S}$,
        \item When integer variables are included, all the $\bm{x}_s$ are integer variables for all stages $s\in\mathcal{S}$,
        \item The length of the planning horizon, $\tau$, is finite and given,
    	\item The number of realizations of $\bm{\xi}_{s}$ is finite for all stages $s\in\mathcal{S}$, and
    	\item Realizations of $\bm{\xi}_{s}$ are independent of other stages for all stages $s\in\mathcal{S}$,
    \end{enumerate}
    where superscripts are dropped as the same conditions are required for strategic and operational subproblems.
\end{prop}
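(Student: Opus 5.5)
The plan is to reduce the risk-averse, possibly mixed-integer FR-NDP to a setting where existing SDDP convergence theory applies, and to remove the relatively complete recourse requirement by an exact-penalty argument.

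\textbf{Step 1: deterministic equivalent.} Conditions 4--6 give a finite horizon, finitely many realizations per stage, and stagewise independence. Hence the scenario tree is finite, although exponentially large, and the nested problem (\ref{eq:Generic_Objective})--(\ref{eq:initial_condition}) is a finite-dimensional optimization problem. Condition 2 (conditional consistency, Definition \ref{def:consistency_risk}) lets us replace the nested risk by the end-of-horizon risk $\mathbb{F}[\sum_s C_s]$ of the total cost. Convexity and monotonicity of $\mathbb{F}$ (Definition \ref{def:convex_risk}), combined with convex $C_s$ and linear $T_s$ (condition 1), make this a convex function of the decisions on the tree, restricted to a mixed-integer convex feasible set. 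Stagewise independence ensures that $V_{s+1}$ depends only on $\bm{x}_s$ and $\bm{\xi}_{s+1}$, so the recursion (\ref{eq:Strategic_Objective})--(\ref{eq:terminal_condition}) is well posed with a cut-sharing structure.

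\textbf{Step 2: drop the RCR assumption via exact penalties.} I would add slack variables to the coupling constraints $\bm{u}_s\in U_s(\bm{x}_s,\bm{\xi}_s)$ that depend on the incoming state, and penalize them by $M\|\cdot\|_1$. This makes every stagewise subproblem feasible for any inherited state, since $U_s$ is nonempty and compact. On the finite deterministic equivalent, the constraint qualification (strict feasibility of the continuous relaxation) bounds the Lagrange multipliers. For the continuous part, a standard exact-penalty theorem then gives a finite $M$ above which the penalized and original optima coincide. For the integer part, finiteness of the integer states together with compactness gives finitely many integer configurations, so a single $M$ works for all of them.

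The translation invariance and monotonicity of $\mathbb{F}$ are what let the penalty pass through the nested risk operators without changing the minimizers. The delicate point is that slack used at one stage would otherwise be discounted or amplified by $\mathbb{F}$ at later stages. Consistency is needed to state the exact-penalty claim on the aggregated problem and then decompose it again.

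\textbf{Step 3: validity and finite convergence of the cuts.} Within each penalized stage problem, convexity of the stage cost and of the risk-adjusted future cost in the state follows by induction backward from $V_{S+1}=0$. The induction uses convexity and monotonicity of $\mathbb{F}$ composed with convex functions, and linearity of $T_s$. Strong duality follows from the constraint qualification, so dual multipliers give valid supporting cuts of each $V_s$, and Lipschitz continuity on the compact state set follows from the bounded multipliers.

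When integer variables appear, condition 3 makes all states integer. I would then use Lagrangian or integer-optimality cuts in the style of \citet{zou2019stochastic}, which are tight at binary or integer states. The continuous case follows the argument of \citet{guigues2016convergence} or \citet{girardeau2015convergence}.

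Because the scenario tree is finite and the sampling picks each realization with positive probability, every node is visited infinitely often almost surely. In the integer case the state set is finite, and in the linear case the cuts are finitely many polyhedral pieces, so the lower bound stabilizes after finitely many iterations. Monotonicity in the iteration count is immediate, because the cut sets only grow and $\mathbb{F}$ is monotone.

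\textbf{Main obstacle.} I expect Step 2 to be the hardest: establishing a finite exact penalty that is uniform over the tree under a nonlinear convex $\mathbb{F}$ such as $\mathbb{ENT}_\gamma$. Its dual representation involves change-of-measure densities that may not be bounded away from zero. My first attempt would bound those densities using the finiteness of the support together with compactness, which bounds the costs.
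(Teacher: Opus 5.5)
Your outline has the same architecture as the paper's proof. You relax constraints with $\ell_1$-penalized slacks so the penalized problem has relatively complete recourse, and pass to the finite deterministic equivalent. You then use conditional consistency to collapse the nested risk of both the original and the penalized problem into end-of-horizon risk, show the penalty is exact there, and quote SDDP convergence (Girardeau/Guigues for continuous states, Zou-type Lagrangian cuts for integer states, Dowson et al.\ for convex risk measures) for the penalized problem.

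The difference is the exactness step. The paper first treats $\mathbb{F}=\mathbb{E}$ and then restricts to $\mathbb{ENT}_\gamma$, arguing via Kupper and Schachermayer that convexity plus conditional consistency essentially forces the entropic class. It slacks every transition equality and inequality, with slacks in a box $[0,\overline{M}]$ so Condition 1 survives. It then invokes Theorems 10 and 14 of Lefebvre et al.\ on the log-sum-exp deterministic equivalent, after checking compactness, convexity, and a Slater condition per integer configuration (strict feasibility, or an empty continuous feasible set).

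You prove exactness by hand instead, and the obstacle you flag is exactly what that citation glosses over. In the penalized problem the penalty sits \emph{inside} $\frac{1}{\gamma}\log\sum_j p^j\exp(\gamma\,\cdot)$, so an additive exact-penalty theorem does not apply verbatim. Translation invariance cannot move it out either, since the penalty is scenario-dependent. Your density idea closes the gap. The gradient inequality for log-sum-exp gives $\mathbb{ENT}_\gamma[Z+\lambda W]\ge\mathbb{ENT}_\gamma[Z]+\lambda\sum_j q_j(Z)W_j$ with $q_j(Z)\ge p^j e^{-\gamma R}$, where $R$ bounds the range of scenario costs by compactness. Hence scaling the additive-penalty threshold by $e^{\gamma R}$ suffices.

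Three points need tightening.

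\textbf{The density bound is where generality ends.} Condition 2 as stated also admits the worst-case operator (Remark 2). Its dual densities vanish off the worst scenario, and then no finite penalty is exact. Take $x\in[0,1]$ at stage 1 and two stage-2 scenarios with costs $1-x$ and $0$, the second requiring $x\le 0$. The optimum is $1$, yet the penalized value is $\min_x\max\{1-x,\lambda x\}=\lambda/(1+\lambda)<1$ for every finite $\lambda$. By contrast, $\mathbb{E}$ needs only $\lambda\ge 1$ and $\mathbb{ENT}_\gamma$ needs $\lambda\ge e^{\gamma}$. So restrict, as the paper does, to expectation and finite-$\gamma$ entropic measures, or assume densities bounded away from zero.

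\textbf{Infeasible integer configurations need a separate argument.} Configurations whose continuous feasible set is empty are precisely the RCR-violating case, and multiplier bounds do not cover them. You need that the minimal violation over the compact continuous set is strictly positive (by lower semicontinuity) while costs are bounded below. Then a finite $\lambda$ prices out each of the finitely many such configurations; this is the paper's alternative (B). Also keep the slacks in a compact box, or the penalized problem leaves Condition 1.

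\textbf{Finite termination.} Your claim that ``in the linear case the cuts are finitely many polyhedral pieces'' fails under $\mathbb{ENT}_\gamma$. The risk-adjusted weights vary continuously, so the value functions are not polyhedral. With continuous states you get only almost sure asymptotic convergence ($\varepsilon$-optimality after finitely many iterations), which is also all the cited results, and hence the paper, deliver. Genuine finite termination comes only from a finite state set.
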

Proposition \ref{prop:suff} is proved in Appendix \ref{app:prop1}.
Condition 6 is commonly referred to as stagewise independence in the SDDP literature.

{
{A key technical issue in solving risk-averse MSSPs with SDDP is how to handle violations of the relatively complete recourse (RCR); see Assumption \ref{app:assumption_RCR} in Appendix \ref{app:sddp}.
Intuitively, RCR requires that the subproblem at stage $s\in\mathcal{S}$ remain feasible for any obtainable state at the previous stage $s-1$. 

This requirement is not trivial in transportation network design because strategic decisions can directly destroy operational feasibility.
An illustrative case is when planners choose not to expand network capacity to reduce investment costs.
Importantly, this case is not merely an extreme.
Because, in early iterations of SDDP, the lower bound approximation of the operational value function is typically loose, the positive value of infrastructure investment is underestimated in the strategic subproblem.
In such early iterations, some demand realizations may yield no feasible operational decision satisfying travel demand constraints, which is a typical violation of RCR.}

Proposition \ref{prop:suff} departs from the RCR assumption by restricting the conditional risk measure $\mathbb{F}_{\bm{\xi}_s}$ to the class of conditionally consistent measures. 
This restriction allows the nested risk representation to be collapsed into an equivalent end-of-horizon representation (cf. Eq. \eqref{eq:CC}), thereby enabling a reformulation as a deterministic equivalent mixed integer convex program. 
Furthermore, employing established results on exact penalty methods for deterministic optimization \citep{lefebvre2024exact}, we prove that the penalized formulation is exact: for sufficiently large (but finite) penalty coefficients, the optimal value of the penalized problem coincides with that of the original problem whenever the original problem is feasible.
The technical details are relegated to Appendix \ref{app:prop1}.

\begin{rem}
	\label{rem:integer}
	Since MSSPs with integer variables (permitted under Condition 3) are nonconvex, the value functions of their dynamic programming equations are generally nonconvex with respect to state variables.
	Fortunately, this difficulty can be overcome because any real-valued function of binary variables can be represented exactly by a convex piecewise linear function.
	This observation provides a theoretical basis for establishing convergence of SDDP-type algorithms for MSSPs with integer variables \citep{zou2019stochastic}.
	Under boundedness (implied by compactness in Condition 1), any bounded integer variable can be represented exactly using a finite set of binary variables.
    Therefore, arguments stated for binary states extend to general bounded integer states.
\end{rem}

\begin{rem}
	\label{rem:ECRM}
	For conditional risk measures, the class of convex measures that satisfy conditional consistency is limited to the expectation and worst-case (essential supremum) operators, as well as the entropic risk measure \citep{kovacevic2009time}.
	While popular convex risk measures such as $\mathbb{CV@R}_{\alpha}$ do not satisfy conditional consistency, embedding $\mathbb{CV@R}_{\alpha}$ into the conditional risk measure $\mathbb{F}_{\bm{\xi}}$ associated with $\mathbb{ECRM}$ can capture both conditional consistency and $\mathbb{CV@R}_{\alpha}$ preferences.
	\citet{khatami2024risk} derive explicit dynamic programming equations for MSSPs with $\mathbb{ECRM}$ as the objective function, rather than the nested risk evaluation shown in Eq. \eqref{eq:Generic_Objective}.
	This dynamic programming representation, however, requires additional continuous state variables that carry risk information across stages (see Appendix \ref{app:prop3}).
\end{rem}
}

\paragraph{Sufficient conditions relevant to upper bounds}

{Upper bounds are needed to support gap-based stopping criteria and to assess solution quality.}
Although SDDP guarantees convergence as stated in Proposition \ref{prop:suff}, the worst-case number of iterations required for convergence remains largely uncharacterized, particularly for problems involving general convex functions \citep{lan2022complexity}.
In practice, commonly used termination criteria include bound stalling, where the SDDP algorithm halts when the lower bound ceases to improve beyond a tolerance over several iterations.
This criterion is valid because the lower bound produced by SDDP is nondecreasing, because the set of cuts used to approximate the value function becomes richer with each iteration.
Another widely adopted criterion in standard optimization techniques is the relative gap between upper and lower bounds.
{To enable such gap-based stopping criteria, we next discuss sufficient conditions for constructing upper bounds.}

{
The first option is a statistical upper bound.
SDDP can provide a candidate policy by solving subproblems forward using the current approximations of the value functions.
The value of this policy can be estimated via Monte Carlo simulation by computing the risk-adjusted total cost over the planning horizon.
An upper confidence bound for this estimate serves as a statistical upper bound.
When the risk measure $\mathbb{F}^{}_{\bm{\xi}_{s}}[\cdot]$ satisfies conditional consistency (Condition 2), this policy evaluation is consistent with the nested objective in Eq. \eqref{eq:Generic_Objective}.
Building on this property, sufficient conditions for constructing the statistical upper bound are given by the following proposition:
\begin{prop}
	\label{prop:suff2}
	Let $\Psi$ denote the random total cost incurred over the planning horizon
	when the policy induced by the current approximation of the value function is implemented.
	Suppose that Conditions 1--6 in Proposition \ref{prop:suff} hold and that 
	the end-of-horizon risk measure admits the representation $\mathbb{F}[\Psi]=h(\mathbb{E}[g(\Psi)])$, where $h(\cdot)$ is nondecreasing and $W:=g(\Psi)$ has finite variance.
	Let $\{\Psi^m\}_{m=1}^M$ be i.i.d. samples of $\Psi$ obtained from $M$ independent Monte Carlo simulations, and define $W^m:=g(\Psi^m)$.
	Let $\mu_W:=\frac{1}{M}\sum_{m=1}^M W^m$ and $\sigma^2_W:=\frac{1}{M-1}\sum_{m=1}^M\left(W^m-\mu_W\right)^2$ be the respective sample mean and sample variance of $W$.
	Then, an asymptotic $100(1-\alpha)\%$ upper confidence bound for the optimal value is given by $h\left( \mu_W + z_{\alpha}\frac{\sigma_W}{\sqrt{M}}\right)$, 
	where $z_{\alpha}$ denotes the $(1-\alpha)$-quantile of the standard normal distribution for $\alpha\in(0,1)$. 
\end{prop}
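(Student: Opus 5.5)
The argument has two links. First, the policy induced by the current cut approximations is feasible (or at worst penalized), so its true risk-adjusted value $\mathbb{F}[\Psi]$ is at least the optimal value $V^\ast$. Second, the Monte Carlo quantity $h(\mu_W+z_\alpha\sigma_W/\sqrt{M})$ is an asymptotic upper confidence bound for $\mathbb{F}[\Psi]=h(\mathbb{E}[W])$.

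\textbf{Step 1: the optimal value is the minimum over policies of the end-of-horizon risk.} Under Conditions 1--6, the nested objective \eqref{eq:Generic_Objective} for any fixed nonanticipative policy equals its nested risk. By conditional consistency (Condition 2, Eq.~\eqref{eq:CC}), this nested risk collapses to the end-of-horizon risk $\mathbb{F}[\Psi]$ of that policy's total cost. By Bellman's principle and the decomposition into Eqs.~\eqref{eq:Strategic_Objective}--\eqref{eq:terminal_condition}, $V_1$ is the minimum of this quantity over all feasible policies. This is where stagewise independence (Condition 6) and finiteness (Conditions 4--5) are used: they make the nested recursion well defined with finitely many scenarios.

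\textbf{Step 2: the candidate policy is admissible.} The candidate policy is obtained by solving each stage subproblem forward with the current lower approximations. It is a measurable, nonanticipative decision rule depending only on $(\bm{x}_{s-1},\bm{\xi}_s)$, because the subproblems have compact feasible sets and lower semicontinuous costs (Condition 1), so minimizers exist. Hence it is one of the policies in the minimization of Step 1, and $V^\ast\le \mathbb{F}[\Psi]$. The anticipated obstacle is that without relatively complete recourse the forward pass may hit an infeasible subproblem. I would handle this as in the proof of Proposition~\ref{prop:suff}: work with the exactly penalized reformulation, whose optimal value coincides with $V^\ast$ for a sufficiently large finite penalty. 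In that reformulation every forward pass is feasible, and $\Psi$ includes the penalty cost, so the inequality is preserved.

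\textbf{Step 3: a confidence bound for $\mathbb{E}[W]$, transported through $h$.} The $W^m=g(\Psi^m)$ are i.i.d. with finite variance. By the central limit theorem together with Slutsky's lemma, using the consistency of the sample variance $\sigma_W^2$,
\[
P\!\left(\mathbb{E}[W]\le \mu_W+z_\alpha\sigma_W/\sqrt{M}\right)\to 1-\alpha .
\]
Since $h$ is nondecreasing, on this event
\[
\mathbb{F}[\Psi]=h(\mathbb{E}[W])\le h\!\left(\mu_W+z_\alpha\sigma_W/\sqrt{M}\right).
\]
Combining with Step 2 gives
\[
\liminf_{M\to\infty}P\!\left(V^\ast\le h\!\left(\mu_W+z_\alpha\sigma_W/\sqrt{M}\right)\right)\ge 1-\alpha,
\]
which is the asymptotic upper confidence bound claimed.

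To close, I would note the two instances of the representation $\mathbb{F}[\Psi]=h(\mathbb{E}[g(\Psi)])$. For the expectation, take $h$ and $g$ to be the identity. For $\mathbb{ENT}_\gamma$, take $g(\cdot)=\exp(\gamma\,\cdot)$ and $h(\cdot)=\gamma^{-1}\log(\cdot)$; here $h$ is increasing, and finite variance of $W$ follows from the finitely many scenarios and bounded costs. Step 2 is the only delicate point, because of possible infeasibility.
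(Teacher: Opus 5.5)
Your proposal is correct and follows essentially the same route as the paper's proof. You pass to the exactly penalized problem so that $V^\ast=V^\ast_\lambda$ is at most the nested risk of the penalized candidate policy, and you collapse that nested risk to the end-of-horizon risk by conditional consistency. You then push a CLT-based one-sided bound for $\mathbb{E}[W]$ through the nondecreasing $h$; your explicit use of Slutsky's lemma and the one-sided $\liminf$ coverage statement is slightly more careful than the paper's informal two-sided interval.
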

}
Proposition \ref{prop:suff2} is proved in Appendix \ref{app:prop2}.

{
The entropic risk measure\footnote{Since the logarithm is concave, $\frac{1}{\gamma}\log( \mu_W )$ is generally an optimistically biased estimator of the entropic risk measure (cf. \citealp{sadana2024mitigating}), although $\mu_W$ and $\sigma_W$ are unbiased estimators of the parameters of $W$.} is a typical example that satisfies the representation $\mathbb{F}[Z]=h(\mathbb{E}[g(Z)])$ in Proposition \ref{prop:suff2}, with $g(Z)=\exp(\gamma Z)$ and $h(\cdot)=\frac{1}{\gamma}\log(\cdot)$.
Subsection 12.4 of \citet{fullner2025stochastic} also note that the entropic risk measure allows for upper bound computation via Monte Carlo simulation, however, it does not provide the explicit expression for such an upper bound.

It is important to note that the upper bound in Proposition \ref{prop:suff2} is statistical and asymptotic.
Since its validity relies on a normal approximation,
it may happen that the statistical upper bound becomes smaller than the valid lower bound stated in Proposition \ref{prop:suff}.
Moreover, the impact of $M$ is often more pronounced under risk measures than in risk-neutral (expectation) settings, because risk measures tend to emphasize severe outcomes and/or involve nonlinear transformations.
}

{A promising alternative is a deterministic upper bound.
One can obtain such a bound} by solving the dynamic programming equation of the dual of Eqs. (\ref{eq:Generic_Objective})--(\ref{eq:initial_condition}).
\citet{guigues2023duality} developed {the Dual} SDDP algorithm for dual problems of risk-neutral MSSPs and proved its convergence.
{
In the following proposition, we extend the convergence conditions in \citet{guigues2023duality} to include certain risk-averse MSSPs:}
\begin{prop}
    \label{prop:suff3}
    The sufficient conditions for the upper bound of the value function $V_1$ in Eqs. (\ref{eq:initial_condition})--(\ref{eq:terminal_condition}) to converge almost surely {and non-increasingly} to the global optimum using the Dual SDDP algorithm in a finite number of iterations are as follows:
    \begin{enumerate}
        \setcounter{enumi}{6}
    	\setlength{\itemsep}{0pt} \setlength{\parskip}{0pt}
		\item Given fixed $\bm{\xi}_s$, {the cost function} $C_s$ is linear, {the transition function} $T_s$ is linear, and {the action space} $U_s$ is a non-empty compact polyhedral set for all stages $s\in\mathcal{S}$,
    	\item {
    	The cost functions $\{C_s\}_{s=1}^S$ are mapped by $\mathbb{ECRM}$ with $\mathbb{CV@R}_{\alpha}$ serving as the conditional risk measures $\{\mathbb{F}^{}_{\bm{\xi}_{s}}\}_{s=1}^{S-1}$,
    	}
        \item All the $\bm{u}_s$ and $\bm{x}_s$ are continuous variables for all stages $s\in\mathcal{S}$, and 
    \end{enumerate}
    Conditions 4--6 in Proposition \ref{prop:suff}, where superscripts are dropped as the same conditions are required for strategic and operational subproblems.
\end{prop}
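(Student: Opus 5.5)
The plan is to reduce the risk-averse problem under Conditions 7--9 to a risk-neutral multi-stage stochastic linear problem (MSSLP). Once that is done, we invoke the Dual SDDP convergence theorem of \citet{guigues2023duality} (reviewed in Appendix \ref{app:sddp0}). The reduction follows \citet{khatami2024risk} (cf. Remark \ref{rem:ECRM}).

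\textbf{Step 1: reformulation.} Using the equivalent form \eqref{eq:ECRM2} and the variational formula \eqref{eq:CVaR}, each term $\mathbb{F}_{\bm{\xi}_{s}}[C_{s+1}]$ becomes
\begin{flalign*}
&\min_{\eta_s}\Bigl\{\eta_s+\tfrac{1}{1-\alpha}\mathbb{E}_{\bm{\xi}_s}[(C_{s+1}-\eta_s)_+]\Bigr\}.&
\end{flalign*}
The threshold $\eta_s$ is chosen at stage $s$ and carried as an extra continuous state variable. At stage $s+1$ we add an auxiliary variable $w_{s+1}\ge C_{s+1}-\eta_s$ with $w_{s+1}\ge 0$. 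The stage objective becomes
\begin{flalign*}
&\eta_s+\tfrac{1}{1-\alpha}w_{s+1}\quad\text{in place of }C_{s+1}.&
\end{flalign*}
Because the outer operator is an expectation, and the minimization over $\eta_s$ commutes with the conditional expectation under stagewise independence (Condition 6), the result is a risk-neutral MSSLP with the same optimal value.

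\textbf{Step 2: checking the hypotheses.} The reformulated problem must satisfy the assumptions of \citet{guigues2023duality}: linear costs, linear transitions, continuous variables, finitely many realizations, stagewise independence, and a finite horizon. All of these are inherited from Conditions 4--9, since $C_s$ and $T_s$ are linear and $U_s$ is polyhedral (Conditions 7 and 9). The new constraints in $\eta_s$ and $w_{s+1}$ are linear.

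The one delicate point is \textbf{compactness}, because $\eta_s$ is a priori unbounded. Since $U_s$ is compact and $C_{s+1}$ is linear, $C_{s+1}$ takes values in a bounded interval $[\underline c,\bar c]$. The CVaR minimizer is a quantile of $C_{s+1}$, so it lies in that interval. Adding the bound $\eta_s\in[\underline c,\bar c]$ therefore changes nothing, and it also bounds $w_{s+1}$. The augmented action sets are then nonempty compact polyhedra.

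\textbf{Step 3: applying Dual SDDP.} The linear-programming dual of the reformulated MSSLP is again a stagewise-decomposable linear problem. Strong duality holds for feasible bounded LPs, so its value equals that of the primal. Dual SDDP builds inner (upper) polyhedral approximations of the dual value functions. Because the cut sets only grow, the resulting upper bounds are non-increasing, and they converge almost surely in finitely many iterations, since there are finitely many bases or extreme points when realizations are finite. Translating back through Step 1 gives the same bound for $V_1$.

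\textbf{Main obstacle.} I expect the hardest part to be Step 2's verification that \citet{guigues2023duality} applies to the primal side without relatively complete recourse. Their Dual SDDP penalizes dual infeasibility and needs a bounded penalty to be exact. I would try first to use the compactness and nonemptiness in Condition 7 to show that the dual multipliers are bounded, for example via a Slater or Hoffman-type bound on the finitely many polyhedra. That bound would make a finite penalty exact, so the penalized dual recursion has the same value as the original problem.
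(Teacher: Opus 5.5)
Your proposal is correct and follows essentially the paper's route. You linearize each conditional CVaR term with a threshold $\eta_s$ carried as a state and an excess variable, which yields a risk-neutral multi-stage stochastic linear problem, and you then invoke the Dual SDDP convergence theorem of \citet{guigues2023duality}, with the dual's lack of relatively complete recourse handled by penalized slacks. Two differences are only in bookkeeping: the paper writes out the resulting dual recursion explicitly, with the bounded multipliers $\upsilon_s\in[0,\beta_s/(1-\alpha)]$ as extra dual states, instead of bounding $\eta_s$ in the primal; and for your fallback, exactness of a finite dual-slack penalty follows directly from boundedness of the primal variables (the penalized dual is the dual of the primal with box constraints), so no Hoffman-type bound is needed.
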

{
Proposition \ref{prop:suff3} is proved in Appendix \ref{app:prop3}.
}

{
Proposition \ref{prop:suff3} can be viewed as a risk-averse generalization of the convergence result for deterministic upper bounds with Dual SDDP established by \citet{guigues2023duality}. 
When $\alpha=0$, $\mathbb{ECRM}$ with $\mathbb{CV@R}_{\alpha}$ reduces to the conditional expectation operator.
In a complementary direction, \citet{da2023dual} pursue a different generalization by developing Dual SDDP for risk-averse MSSPs with specific conditional risk measures. 
Our focus differs in that we highlight \textit{conditionally consistent} risk measures and demonstrate that the deterministic upper bounds via Dual SDDP remain valid under this structure.

Beyond the above theoretical positioning, expressing risk  preferences through $\mathbb{ECRM}$ with $\mathbb{CV@R}_{\alpha}$ also offers concrete modeling benefits in transportation network design.
First, applying $\mathbb{CV@R}_{\alpha}$ to each stagewise cost enables different risk attitudes toward design expenditures and operational performance.
Second, conditional consistency implies that the end-of-horizon risk of total system costs can be evaluated when committing to the initial strategic decisions.
This contrasts with conditional risk formulations (such as \citealp{da2023dual}) that apply the risk mapping to the value functions.
}

{
\begin{rem}
    The convergence guarantees of this study are qualitative. 
    Propositions \ref{prop:suff}--\ref{prop:suff3}
    hold independently of the problem scale given sufficiently many iterations.
    None of them, however, characterizes the quantitative property of how many iterations convergence requires.
    This computational scalability is beyond the scope of our propositions.
    In Section 6, we discuss this limitation and a promising direction to address it.
\end{rem}
}

%% file: ch3-4.tex
\subsubsection{Discussion on application to emerging transportation services}

We will make some comments regarding sufficient conditions in Propositions \ref{prop:suff}--\ref{prop:suff3} {from a modeling perspective. 
This subsection delineates a tractable class of FR-NDP formulations in which flexibility and reliability can be incorporated while preserving the SDDP-type convergence guarantees established above.
We discuss
(i)~strategic settings in which the conditions are naturally satisfied,
(ii)~operational settings in which they hold only after approximation, together with the standard devices used to recover them, and
(iii)~strategic settings in which the conditions should be regarded as approximations or require a different theoretical treatment. 

Many strategic network design decisions are represented naturally by linear transition functions and mixed-integer polyhedral action sets.
Since capacity expansion across stages and discrete decisions such as constructing new facilities are linear in the state, the corresponding feasible regions are mixed-integer polyhedral and Condition~1 holds without modification. 
At the operational level, network design problems built on system optimal dynamic traffic assignment (SO-DTA) with a linear programming (LP) formulation \citep{ziliaskopoulos2000linear, waller2001stochastic} satisfy Condition~1 and the conditions of Proposition~\ref{prop:suff3}.
Both applications studied in this paper fall within this class.
The first application is capacity expansion planning in shared autonomous vehicle (SAV) systems,
while the second application is  a route design problem for bus rapid transit (BRT) systems integrated with SAV operation.
Both FR-NDPs are built on SO-DTA models to describe dynamic traffic flow of SAVs, BRTs and travelers.
The flow of SAVs and travelers as well as capacity expansion decisions are expressed by continuous decision variables with continuous capacity states.
The flow of BRTs and their lane design are expressed by binary decision variables with binary design states indicating whether the lanes have already been constructed.
The tractable class therefore covers network design planning for the representative emerging transportation services that motivate this study, which indicates that the restrictions are not so severe as to exclude the problems of primary interest.

The principal limitation arises at the operational level.
Firstly, nonlinear, microscopic, or equilibrium-based traffic models are not handled exactly by the convergence theory.
For example, closely adhering to kinematic wave theory, such as non-vehicle holding, often makes the operational model nonconvex \citep{long2018dynamic}.
The LP formulation of SO-DTA serves as a representative linear approximation for nonconvex traffic dynamics.
In practice, McCormick envelopes are used to relax bilinear terms in electric vehicle operations (e.g., \citeauthor{lv2019optimal}, \citeyear{lv2019optimal}).
These relaxation techniques restore the convexity required by Condition~1.
Secondly, the operational decisions can restrict the strategic decisions that convergence theory can exactly address.
Condition~3 requires that, when integer variables are present, the state variables be integer.
Discrete operational decisions are natural in operational problems where integer decisions dominate, such as vehicle routing with pickup and delivery.
Continuous NDPs built on such operational problems  fall outside Condition~3.
One of the standard devices bringing such models into scope is binary approximation.
Under boundedness (implied by compactness in Condition~1), any bounded integer variable admits an exact binary representation, which yields convex piecewise linear value functions and underpins SDDP convergence for integer programs (Remark~1).
Binary approximation of continuous variables extends this to certain nonconvex problems to a prescribed precision (Theorem~4 of \citealp{zou2019stochastic}).

These devices are not without cost; hence, we state the trade-off explicitly.
Since convex relaxation enlarges the feasible region, the convergence guarantees apply to the relaxed model rather than to the original nonconvex one.
The relaxation gap must be assessed in applications where the non-convexity is material.
Binary approximation buys precision at the price of additional binary variables and a correspondingly larger state space,
giving a trade-off between accuracy and tractability depending on the resolution of the approximation.

Limitations also arise at the strategic level.
Several conditions may fail and call for a separate theoretical treatment.}
\begin{itemize}
    \setlength{\itemsep}{0pt} \setlength{\parskip}{0pt}
    \item Since the performance of long-term planning could depend on the planning horizon $\tau$, it makes sense to evaluate performance through discounted infinite-horizon MSSPs, which violate Condition 4.
    While no theoretical convergence guarantee exists for infinite-horizon MSSPs, \citet{shapiro2020periodical} and \citet{hole2024capacity} have demonstrated numerical convergence in practice with SDDP-type algorithms.
    \item When uncertain parameters follow continuous distributions, Condition 5 does not hold. In the risk-neutral setting,  \citet{shapiro2011analysis} provides theoretical properties of the sufficient sample size for forming a Sample Average Approximation (SAA) problem to solve the true problem with desired precision.
    \item In our FR-NDP setting, the operational uncertainty (e.g., daily demand) may depend on the strategic uncertainties (e.g., demand growth), which potentially violates stagewise independence, Condition 6.
    In the following section, we demonstrate how to address this issue in FR-NDP applications to SAV and BRT systems.
\end{itemize}

%% file: ch4-1.tex
\section{Applications}
\label{sec:4}

This section provides two applications of the proposed FR-NDP framework to representative emerging transportation services.
The first application addresses a capacity expansion problem for shared autonomous vehicle (SAV) systems.
We apply the proposed framework to a deterministic linear programming model proposed by \citet{seo2022multi}, which jointly optimizes SAV operations and infrastructure design.
The second application considers a route design problem for bus rapid transit (BRT) systems integrated with SAV operations (hereafter referred to as SAV-BRT systems).
We modify the SAV-BRT model proposed by \citet{maruyama2023integrated}, which integrates BRT route design and vehicle assignment into \citet{seo2022multi}.
The SAV and SAV-BRT models are presented in Appendix \ref{app:SAV} and \ref{app:SAV_BRT}.
For each application, we present the corresponding formulation and specify sufficient conditions for our theoretical properties to hold.
For the capacity expansion problems for SAV systems, we leverage the linearity of the problem to derive analytical properties of multi-stage investment planning under uncertainty.
The optimal strategic and operational decisions obtained from these FR-NDPs will be illustrated through numerical experiments in the next section.

\subsection{Application to network design for SAV systems}

\subsubsection{Problem setting}

This subsection describes the key strategic and operational problem setting of the FR-NDP for capacity expansion planning in SAV systems.
The following problem setting is based on \citet{seo2022multi}, with modifications to accommodate demand uncertainties and multi-stage infrastructure investment decisions.

The operational assumptions employed by the proposed model are as follows.
We consider a system in which only SAVs provide transportation services to travelers.
An SAV is defined as a driverless vehicle that is shared by a society and may be used by any traveler in the society.
Each SAV has a passenger capacity (e.g., they can transport two travelers simultaneously).
SAVs' travels are constrained by link and node capacities.
Node capacity can be interpreted as queueing capacity or parking capacity of SAVs.
SAVs are deployed at the beginning of each operational stage and run or park in the network throughout the corresponding stage.
A traveler is defined as a person who has a specific origin, destination, and desired arrival time; to use the service he/she submits such demand profiles to a centralized planner.
Such a demand profile is aggregated as an origin-destination (OD) matrix by a desired arrival time.
A traveler can move only when they ride an SAV; otherwise he/she must wait at a node.
At the beginning of each operational stage, the OD matrix is realized, and all such demands must be served during the corresponding operational stage.
The number of possible realizations of daily demand at each operational stage is finite.
The planner makes operational decisions to minimize operational costs incurred in satisfying travelers' demand.
The operational decisions include SAV routing (including waiting at nodes), vehicle-traveler assignment, and fleet size of SAVs\footnote{Although fleet sizing may be considered as strategic, we regard it as part of operational decisions as our primary focus lies on the multi-stage network design.}, based on the realized daily demand.
The planner makes dynamic operational decisions at multiple time steps within each operational stage.
The operational costs comprise travel and schedule costs of travelers, travel costs of SAVs, and maintenance costs of SAVs.

The strategic assumptions employed by the proposed model are as follows.
The planner formulates capacity expansion planning of links and nodes to minimize the sum of the current strategic cost and future costs, which are assessed using a convex and conditionally consistent risk measure.
Added link and node capacities are available throughout the planning horizon.
The length of the planning horizon is finite and given.
At the beginning of each strategic stage, a random demand growth is realized in the form of the expectation of daily demand for the subsequent operational stage.
The set of possible realizations of the demand growth at each strategic stage is finite.
Strategic cost is the sum of capacity expansion and maintenance costs.
Note that under the problem setting, Conditions 2, 4, and 5 in Proposition \ref{prop:suff} are satisfied.

\begin{table}[t]
        \centering
	\caption{List of variable notation for the SAV system}
	\label{tab:variable}       
	\scalebox{0.75}{
		\begin{tabular}{llll}
			\hline\noalign{\smallskip}
			type& notation & definition \\
			\noalign{\smallskip}\hline\noalign{\smallskip}
            state variable $\bm{x}_s$ & $\kappa_{s,ij}$ & traffic capacity of link $ij$ at stage $s$\\ 
            & $\kappa_{s,ii}$ & storage capacity of node $i$ at stage $s$\\ 
             &  $Q_{s,od}^{l}$ & expected value of travel demand with origin $o$, destination $d$, \\
             & & and desired arrival time step $l$, at stage $s$\\ \hline
             strategic variable $\bm{u}_s^{\rm F}$ &  $\hat{\kappa}_{s,ij}$ & expansion of traffic capacity of link $ij$ at stage $s$\\
              &  $\hat{\kappa}_{s,ii}$ & expansion of storage capacity of node $i$ at stage $s$\\\hline
			operational variable $\bm{u}_s^{\rm G}$ & $z_{s,ij}^{t}$ & flow of SAVs that start traveling link $ij$ on time step $t$ at stage $s$\\
            & $\hat{z}_{s,i}$ & the number of SAVs deployed on node $i$ at stage $s$\\
			& $y_{s,d,ij}^{l,t}$ & flow of travelers who start traveling link $ij$ on time step $t$, with destination $d$\\
			 & &and desired arrival time step $l$, at stage $s$ \\
			& $q_{s,d}^{l,t}$& the number of traveler arrivals on time step $t$, with destination $d$\\
			& &and desired arrival time step $l$, at stage $s$\\
			\noalign{\smallskip}\hline
	\end{tabular}}
\end{table}

\begin{table}[t]
	\caption{List of parameter notation for the SAV system}
	\centering
	\label{tab:parameter}       
	\scalebox{0.75}{
		\begin{tabular}{ll}
			\hline\noalign{\smallskip}
			notation & definition \\
			\noalign{\smallskip}\hline\noalign{\smallskip}
			$t_{ij}$ & free-flow travel time of link $ij$ if $i\neq{j}$\\
			$t_{ii}$ & waiting time at node $i$\\
            $f_{s,ij}$ & unit travel cost of travelers on link $ij$ at stage $s$\\
			$d_{s,ij}$ & unit travel cost of SAVs on link $ij$ at stage $s$\\
            $d_s$ & unit maintenance cost of SAVs at stage $s$\\
            $\epsilon_s$ & unit early schedule cost of travelers at stage $s$\\
            $\lambda_s$ & unit late schedule cost of travelers at stage $s$\\
			$c_{s,ij}$ & unit cost of maintaining infrastructure of link $ij$ during a single period at stage $s$\\
            $c_{s,ii}$ & unit cost of maintaining infrastructure of node $i$ during a single period at stage $s$\\
            $\hat{c}_{s,ij}$ & unit cost of expanding traffic capacity of link $ij$ at stage $s$\\
            $\hat{c}_{s,ii}$ & unit cost of expanding storage capacity of node $i$ at stage $s$\\
			$\rho$ & passenger capacity of an SAV\\
            $\mu_{s,od}^{l}$ & the long-term mean of travel demand with origin $o$, destination $d$, and desired arrival time step $l$\\
            $\phi_{s,od}^{l}$ & the reverting speed to the long-term mean $\mu_{s,od}^{l}$\\
            $\delta_{i,d}$ & Kronecker’s delta (i.e., 1 if $i=d$, zero otherwise)\\
			$\xi_{s,od}^{l, \rm F}$ & uncertain growth of travel demand with origin $o$, destination $d$, and desired arrival time step $l$, at stage $s$\\
            $\xi_{s,od}^{l, \rm G}$ & fluctuation of daily travel demand with origin $o$, destination $d$, and desired arrival time step $l$, at stage $s$\\
			\noalign{\smallskip}\hline
	\end{tabular}}
\end{table}

\subsubsection{Formulation}

The FR-NDP for the SAV system based on the above problem setting is fully specified, by instantiating the subproblem functions $C^{{\rm F}/{\rm G}}_s$, $T^{{\rm F}/{\rm G}}_s$, and $U^{{\rm F}/{\rm G}}_s$ as follows:
\begin{subequations}
\label{eq:SAV-NR-NDP}
    \begin{flalign}
        \label{eq:SAV_strategic_cost}
    	&C^{\rm F}_s:=\hat{\bm{c}}_s^{\top}\hat{\bm{\kappa}}_{s}+\bm{c}_s^{\top}\bm{\kappa}_s&\forall s\in\mathcal{S}^{\rm F},\\
        \label{eq:SAV_strategic_transition}
    	&T_s^{\rm F}:=\{\bm{\kappa}_{s-1}+\hat{\bm{\kappa}}_{s},-\bm{\phi}^{\top}(\bm{Q}_{s-1} - \bm{\mu}) + \bm{\xi}_s^{\rm F}\}&\forall s\in\mathcal{S}^{\rm F},\\
            \label{eq:SAV_operational_cost}
    	&C^{\rm G}_s:=\sum_t\bm{f}_s^{\top}\bm{y}_{s}^t+\epsilon_s\sum_{t< l}\bm{1}^{\top}\bm{q}_s^t+\lambda_s\sum_{t>l}\bm{1}^{\top}\bm{q}_s^t+\sum_t\bm{d}_s^{\top}\bm{z}_{s}^t+d_s\bm{1}^{\top}\hat{\bm{z}}_{s}&\forall s\in\mathcal{S}^{\rm G},\\
            \label{eq:SAV_operational_transition}
            &T_s^{\rm G}:=\{\bm{\kappa}_{s-1},\bm{Q}_{s-1}\}&\forall s\in\mathcal{S}^{\rm G},\\
            \label{eq:SAV_vehicle_flow_dynamics}
    	&U_s^{\rm G}:=\Bigl\{\sum_j z_{s,ji}^{t-t_{ji}}=\sum_j z_{s,ij}^{t} - \delta_{t1} \hat{z}_{s,i}& \forall t,i,s\in\mathcal{S}^{\rm G}, \\
            \label{eq:SAV_passenger_flow_dynamics}
    	&~~~~~~~~~~~~~\sum_j y_{s,d,ji}^{l,t-t_{ji}}=\sum_j y_{s,d,ij}^{l,t} + \delta_{id} q_{s,d}^{l,t}& \forall t,d,l,i,s\in\mathcal{S}^{\rm G}, \\
            \label{eq:SAV_passenger_arivals}
    	&~~~~~~~~~~~~~\sum_t q_{s,d}^{l,t} = \sum_o (Q_{s,od}^{l} +\xi_{s,od}^{l,\rm G})&\forall d,l,s\in\mathcal{S}^{\rm G},\\
            \label{eq:SAV_vehicle_capacity}
    	&~~~~~~~~~~~~~\sum_{d,l}{y}_{s,d,ij}^{l,t}\le \rho{z}_{s,ij}^t &\forall t,ij,s\in\mathcal{S}^{\rm G},\\
            \label{eq:SAV_network_capacity}
    	&~~~~~~~~~~~~~{z}_{s,ij}^t\le {\kappa}_{s,ij} &\forall t,ij,s\in\mathcal{S}^{\rm G}\\
    	\nonumber
    	&\Bigr\},&
    \end{flalign}
\end{subequations}
where all the variables are nonnegative and continuous.
The notations in the scalar form are listed in Tables \ref{tab:variable} and \ref{tab:parameter}. 

The primary modifications from \citet{seo2022multi} to capture demand uncertainty and multi-stage network design decisions can be seen in Eq. (\ref{eq:SAV_strategic_transition}) and the right-hand side of Eq. (\ref{eq:SAV_passenger_arivals}).
Eq. (\ref{eq:SAV_strategic_transition}) represents capacity expansion and demand growth.
The demand growth is modeled by a mean‐reverting process with the long-term mean $\bm{\mu}$; note, however, that this serves only as a representative example of stagewise dependent demand growth.
The term $Q_{s,od}^l+\xi_{s,od}^{l,{\rm G}}$ on the right-hand side of Eq. (\ref{eq:SAV_passenger_arivals}) denotes the total daily demand in an operational stage: the first term is the expected daily demand realized at the preceding strategic stage, and the second term captures the random fluctuations in daily demand.

The meanings of the remaining functions are as follows.
Eqs. (\ref{eq:SAV_strategic_cost}) and (\ref{eq:SAV_operational_cost}) are strategic and operational costs.
The first and second terms of Eq. (\ref{eq:SAV_strategic_cost}) are capacity expansion and
maintenance costs.
Each term in Eq. (\ref{eq:SAV_operational_cost}) corresponds, in order, to 
the total travel cost of travelers, 
the total early schedule cost of travelers, 
the total late schedule cost of travelers, 
the total travel cost of SAVs, and
the total maintenance cost of SAVs.
Operational action space $U^{\rm G}_s$ is constructed by
Eqs. (\ref{eq:SAV_vehicle_flow_dynamics})--(\ref{eq:SAV_network_capacity}).
Eq. (\ref{eq:SAV_vehicle_flow_dynamics}) is the flow conservation of SAVs.
Eq. (\ref{eq:SAV_passenger_flow_dynamics}) is the
flow conservation of travelers.
Eq. (\ref{eq:SAV_passenger_arivals}) is the flow conservation of demand.
Eqs. (\ref{eq:SAV_vehicle_capacity}) and (\ref{eq:SAV_network_capacity}) are passenger capacity and traffic capacity constraints, respectively.
For detailed descriptions of the constraints, see \citet{seo2022multi}.

Eq. (\ref{eq:SAV-NR-NDP}) is linear and all the variables are continuous; thus, Conditions 1, 3, 7, and 9 in Propositions \ref{prop:suff} and \ref{prop:suff3} are satisfied.
Furthermore, because the aforementioned problem setting meets Conditions 2, 4, and 5, we obtain the following corollary:
\begin{coro}
\label{coro:SAV1}
If realizations of $\bm{\xi}^{\rm F}$ and $\bm{\xi}^{\rm G}$ are mutually independent (i.e., stagewise independent), 
the proposed FR-NDP for the SAV system in Eq. (\ref{eq:SAV-NR-NDP}) satisfies sufficient conditions stated in Proposition \ref{prop:suff}.
\end{coro}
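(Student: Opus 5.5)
The plan is a condition-by-condition check of Proposition \ref{prop:suff} against the instantiation in Eq.~(\ref{eq:SAV-NR-NDP}). Condition 6 is supplied directly by the hypothesis of the corollary. Conditions 2, 4 and 5 were already observed to hold in the problem setting: the risk measure is assumed convex and conditionally consistent, the horizon $\tau$ is finite and given, and both demand growth and daily demand have finitely many realizations. So the work reduces to Conditions 1 and 3, and Condition 3 is vacuous because every variable in Eq.~(\ref{eq:SAV-NR-NDP}) is continuous.

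For Condition 1, I would go through each function in turn, with $\bm{\xi}_s$ fixed.

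\emph{Costs.} $C^{\rm F}_s$ in Eq.~(\ref{eq:SAV_strategic_cost}) and $C^{\rm G}_s$ in Eq.~(\ref{eq:SAV_operational_cost}) are linear in $(\bm{x}_s,\bm{u}_s)$. They are therefore convex, finite-valued (hence proper) and continuous, so in particular lower semicontinuous.

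\emph{Transitions.}
\begin{itemize}
\item $T^{\rm F}_s$ in Eq.~(\ref{eq:SAV_strategic_transition}) consists of $\bm{\kappa}_{s-1}+\hat{\bm{\kappa}}_s$ and $-\bm{\phi}^{\top}(\bm{Q}_{s-1}-\bm{\mu})+\bm{\xi}^{\rm F}_s$. Both are affine in $(\bm{x}_{s-1},\bm{u}_s)$, with the constant parts entering only through the parameters $\bm{\mu}$ and $\bm{\xi}^{\rm F}_s$.
\item $T^{\rm G}_s$ in Eq.~(\ref{eq:SAV_operational_transition}) is the identity.
\item I read ``linear'' in Condition 1 as affine in the state and decision for fixed $\bm{\xi}_s$. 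This is the reading under which SDDP cuts remain valid.
\end{itemize}

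\emph{Action sets.} $U^{\rm G}_s$ is defined by the linear equalities and inequalities (\ref{eq:SAV_vehicle_flow_dynamics})--(\ref{eq:SAV_network_capacity}) together with nonnegativity, so it is polyhedral and hence convex. No separate set $U^{\rm F}_s$ is specified, so I would take it to be nonnegativity of $\hat{\bm{\kappa}}_s$ plus any budget bound.

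The main obstacle is the nonempty-and-compact requirement on the action sets; polyhedrality alone does not give it.

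\emph{Boundedness of $U^{\rm G}_s$.}
\begin{itemize}
\item Vehicle flows $z^t_{s,ij}$ are bounded by the capacities $\kappa_{s,ij}$ through Eq.~(\ref{eq:SAV_network_capacity}).
\item Traveler flows $y$ are then bounded via Eq.~(\ref{eq:SAV_vehicle_capacity}).
\item Arrivals $q$ are bounded by the finite total demand in Eq.~(\ref{eq:SAV_passenger_arivals}).
\item The deployed fleet $\hat{z}_{s,i}$ is bounded through the flow conservation in Eq.~(\ref{eq:SAV_vehicle_flow_dynamics}) over the finite time horizon: deployed vehicles must occupy links or nodes, whose capacities are finite.
\end{itemize}
Boundedness of the capacities themselves follows if the strategic expansion $\hat{\bm{\kappa}}_s$ is bounded, for instance by a budget or physical upper limit. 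I would impose this explicitly in $U^{\rm F}_s$. This is harmless because positive expansion costs make arbitrarily large expansions suboptimal.

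\emph{Nonemptiness.} Nonemptiness of $U^{\rm F}_s$ is immediate, since $\hat{\bm{\kappa}}_s=\bm{0}$ is feasible. Nonemptiness of $U^{\rm G}_s$ for every inherited state is exactly the RCR issue, and it can fail when capacities are too small for the realized demand.

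I would handle this by appealing to the remark following Proposition \ref{prop:suff}: its proof does not require RCR, because feasibility violations are absorbed through the exact-penalty reformulation in Appendix~\ref{app:prop1}. What must then be checked is only that the original problem is feasible. That holds if some expansion level is large enough to serve the maximal realized demand, which exists because demand has finitely many realizations. Collecting Conditions 1--6 then yields the corollary.
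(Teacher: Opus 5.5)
Your proposal is correct and follows the same route as the paper. The paper simply observes that Eq.~(\ref{eq:SAV-NR-NDP}) is linear with only continuous variables (giving Conditions 1 and 3), that the problem setting supplies Conditions 2, 4 and 5, and that the stagewise-independence hypothesis supplies Condition 6. Your additional points fill in details the paper leaves implicit without changing the argument: compactness via an explicit upper bound on $\hat{\bm{\kappa}}_s$ (chosen large enough to keep the problem feasible; the paper imposes such bounds only in its numerical settings), and nonemptiness reduced through the exact-penalty argument of Appendix~\ref{app:prop1} to feasibility of the original problem rather than RCR.
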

Furthermore, we immediately obtain the following corollary:
\begin{coro}
\label{coro:SAV2}
If realizations of $\bm{\xi}^{\rm F}$ and $\bm{\xi}^{\rm G}$ are mutually independent (i.e., stagewise independent) and the objective function is given in the form of $\mathbb{ECRM}$ with $\mathbb{CV@R}$ as in Eq. (\ref{eq:ECRM}), the proposed FR-NDP for the SAV system in Eq. (\ref{eq:SAV-NR-NDP}) satisfies sufficient conditions stated in Proposition \ref{prop:suff3}.
\end{coro}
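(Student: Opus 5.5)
The plan is to check Conditions 7, 8, 9 of Proposition~\ref{prop:suff3} directly, together with Conditions 4--6 of Proposition~\ref{prop:suff}, for the instantiation in Eq.~(\ref{eq:SAV-NR-NDP}). After that, the corollary follows by invoking Proposition~\ref{prop:suff3}.

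\textbf{Condition 7.} Fix a realization $\bm{\xi}_s$.
The strategic cost (\ref{eq:SAV_strategic_cost}) is a linear form in $(\hat{\bm{\kappa}}_s,\bm{\kappa}_s)$. The operational cost (\ref{eq:SAV_operational_cost}) is a linear form in $(\bm{y}_s^t,\bm{q}_s^t,\bm{z}_s^t,\hat{\bm{z}}_s)$.
The transitions (\ref{eq:SAV_strategic_transition}) and (\ref{eq:SAV_operational_transition}) are affine in $(\bm{x}_{s-1},\bm{u}_s)$; the term $\bm{\xi}^{\rm F}_s$ enters only as an additive shift for fixed $\bm{\xi}$.
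The operational action set $U_s^{\rm G}$ is cut out by the finitely many linear equalities and inequalities (\ref{eq:SAV_vehicle_flow_dynamics})--(\ref{eq:SAV_network_capacity}), plus nonnegativity, so it is polyhedral.

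\textbf{Condition 9.} All variables are declared nonnegative and continuous, so this holds immediately.

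\textbf{Condition 8.} This is exactly the hypothesis that the objective is $\mathbb{ECRM}$ with $\mathbb{CV@R}$ applied to the stage costs as in Eq.~(\ref{eq:ECRM}).

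\textbf{Conditions 4--6.} The horizon is finite and the realization sets are finite by the problem setting, as already noted before Corollary~\ref{coro:SAV1}. Stagewise independence is assumed in the corollary.

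The main obstacle is the \emph{compactness and non-emptiness} requirement in Condition 7, since the polyhedra above need not be bounded or feasible as written.

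\textbf{Boundedness.} The strategic expansions $\hat{\bm{\kappa}}_s$ carry no explicit upper bound. I would argue that one can add a sufficiently large cap without changing the optimum: positive unit costs $\hat{c}$ and $c$ make unboundedly large expansions suboptimal. This cap should be treated as an implicit modeling bound, in the same spirit in which the paper states boundedness elsewhere.
On the operational side, the flows are bounded by standard estimates. Link flows satisfy $z\le\kappa$, passenger flows satisfy $y\le\rho z$, and arrivals $q$ are bounded by the total demand. The fleet $\hat{z}$ is bounded through flow conservation (\ref{eq:SAV_vehicle_flow_dynamics}) together with the finite time horizon and the capacity bounds, including the node storage capacity.

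\textbf{Non-emptiness.} I would note that the departure from RCR established in Proposition~\ref{prop:suff}'s exact-penalty argument is not available here. Instead, feasibility of $U_s^{\rm G}$ must hold for each state, or be ensured by a penalized slack in (\ref{eq:SAV_passenger_arivals}). I would state that the model either assumes feasibility or adopts this penalty-based remedy. This is also where the nonnegativity of $\bm{Q}_{s-1}+\bm{\xi}^{\rm G}_s$ would be checked.

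With these verified, Proposition~\ref{prop:suff3} applies and the corollary follows.
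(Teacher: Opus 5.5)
Your proposal is correct and follows the paper's route: the paper's entire argument is the one-line remark before Corollary~\ref{coro:SAV1} that Eq.~(\ref{eq:SAV-NR-NDP}) is linear with continuous variables, which gives Conditions 7 and 9. The problem setting then gives Conditions 4 and 5, and the hypotheses supply Conditions 6 and 8.

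Your extra scrutiny of compactness and non-emptiness goes beyond the paper, which treats linearity as sufficient for Condition 7. You are right that linearity yields only polyhedrality. What actually secures Condition 7 is an explicit cap on $\hat{\bm{\kappa}}_s$ (cf.\ the bounds on $\bm{\kappa}_s$ imposed in Section~\ref{sec:5}) together with a stated feasibility assumption or penalized slack. Your version is therefore the more careful one, with these as added modeling hypotheses rather than consequences of Eq.~(\ref{eq:SAV-NR-NDP}) as written.
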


Leveraging the linearity of the proposed problem, we can derive theoretical results on the cost-recovery principles for infrastructure investments.
The proof is deferred to Appendix \ref{app:prop4}; letting $\bm{p}^*_s$ denote the optimal dual variables corresponding to traffic capacity constraints (\ref{eq:SAV_network_capacity}), we obtain the following proposition:
\begin{prop}
    \label{prop:cost_principle}
    If realizations of $\bm{\xi}^{\rm F}$ and $\bm{\xi}^{\rm G}$ are mutually independent (i.e., stagewise independent) and the conditional risk measure $\mathbb{F}_{\bm{\xi}}$ is $\mathbb{E}_{\bm{\xi}}$, we obtain
    \begin{flalign}
    	\nonumber
        &c^{}_{1,ij}\kappa^*_{1,ij}+\hat{c}^{}_{1,ij}\hat{\kappa}^*_{1,ij}+\mathbb{E}_{\xi_2^{\rm G}}\left[c^{}_{3,ij}\kappa^*_{3,ij}+\hat{c}_{3,ij}\hat{\kappa}^*_{3,ij}+\mathbb{E}_{\xi_4^{\rm G}}\left[\cdots+\mathbb{E}_{\xi_{S-2}^{\rm G}}\left[c^{}_{S-1,ij}\kappa^*_{S-1,ij}+\hat{c}^{}_{S-1,ij}\hat{\kappa}^*_{S-1,ij}\right]\right]\right]&
    \end{flalign}
    \vspace{-3.2ex}
    \begin{flalign}
    	\label{eq:self_financing}
    	&=\mathbb{E}_{\xi_1^{\rm F}}\left[\sum_t p^{t,*}_{2,ij}z^{t,*}_{2,ij}+\mathbb{E}_{\xi_3^{\rm F}}\left[\cdots+\mathbb{E}_{\xi_{S-1}^{\rm F}}\left[\sum_t p^{t,*}_{S,ij}z^{t,*}_{S,ij}\right]\right]\right]~~~~~~~~~~~~~~~~~~~~~~~~~~~~~~~~~~~~~~~~~~~~~~~~~~~~~~~~~~~\forall ij.&	
    \end{flalign}
\end{prop}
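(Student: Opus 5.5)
The plan is to pass to the deterministic-equivalent multi-stage stochastic linear program on the scenario tree, use strong LP duality and its KKT conditions, and sum stationarity and complementary slackness along the capacity chain of a single link $ij$. With $\mathbb{F}_{\bm{\xi}}=\mathbb{E}_{\bm{\xi}}$, the nested objective (\ref{eq:Generic_Objective}) of (\ref{eq:SAV-NR-NDP}) is a single expectation of the total cost over the finitely many scenario-tree nodes (Conditions 4--6). This is a finite, feasible, bounded LP, so strong duality holds and an optimal primal--dual pair exists. I then rescale the multipliers of each node's constraints by that node's probability, so the dual variables read as conditional prices. Stagewise independence lets the nested conditional expectations on both sides of (\ref{eq:self_financing}) be rewritten as probability-weighted sums over nodes, via the tower property.

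The multipliers I would introduce for fixed $ij$ are as follows. Let $\nu_{s,ij}$ be the multiplier of the strategic transition $\kappa_{s,ij}=\kappa_{s-2,ij}+\hat\kappa_{s,ij}$ for $s\in\mathcal{S}^{\rm F}$, where $\kappa_{s-2,ij}$ is the capacity carried through the preceding operational stage by the copy in (\ref{eq:SAV_operational_transition}). Let $p^{t}_{s+1,ij}$ be the multiplier of $z^t_{s+1,ij}\le\kappa_{s,ij}$ in the following operational stage. The KKT conditions then give two stationarity relations with complementary slackness. For $\hat\kappa_{s,ij}$:
\begin{equation*}
(\hat c_{s,ij}-\nu_{s,ij})\,\hat\kappa^*_{s,ij}=0.
\end{equation*}
For $\kappa_{s,ij}$:
\begin{equation*}
\Bigl(c_{s,ij}+\nu_{s,ij}-\mathbb{E}[\nu_{s+2,ij}]-\textstyle\sum_t p^{t}_{s+1,ij}\Bigr)\kappa^*_{s,ij}=0,
\end{equation*}
where the expectation is over the successor nodes and $\nu_{S+1,ij}:=0$ by (\ref{eq:terminal_condition}). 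Complementary slackness for the capacity constraint gives
\begin{equation*}
\textstyle\sum_t p^{t,*}_{s+1,ij}\kappa^*_{s,ij}=\sum_t p^{t,*}_{s+1,ij}z^{t,*}_{s+1,ij}.
\end{equation*}
This is the step that converts the shadow value of capacity into toll revenue.

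Multiplying by the node probabilities and summing over all strategic nodes gives
\begin{equation*}
\textstyle\sum\bigl(c\kappa^*+\hat c\hat\kappa^*\bigr)=\sum\sum_t p^*z^*+\sum\bigl(\nu_s\hat\kappa^*_s-\nu_s\kappa^*_s+\mathbb{E}[\nu_{s+2}]\kappa^*_s\bigr).
\end{equation*}
Substituting $\hat\kappa^*_s=\kappa^*_s-\kappa^*_{s-2}$ makes the last sum telescope along every scenario path. Only the boundary terms $-\nu_{1,ij}\kappa_{0,ij}$ and $\nu_{S+1,ij}\kappa^*_{S-1,ij}=0$ survive. Re-nesting the remaining sums by stagewise independence yields exactly the two sides of (\ref{eq:self_financing}); the identity holds separately for each $ij$ because $\kappa_{ij}$ is coupled only to $z_{ij}$.

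I expect the main obstacle to be this boundary term from the initial capacity $\hat{\bm{x}}_0$. I would first check whether the model takes initial link capacity $\kappa_{0,ij}=0$ (all capacity built within the horizon), in which case the term vanishes. Otherwise I would treat $\nu_{1,ij}\kappa_{0,ij}$ as the value of the inherited capacity, and make sure the paper's intended statement either assumes it away or absorbs it. A secondary care point is bookkeeping: the operational copy constraint in (\ref{eq:SAV_operational_transition}) must not introduce extra cost terms, which holds because its multiplier simply passes $\nu$ and $p$ between stages without cost.
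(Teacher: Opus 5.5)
Your argument is correct and takes the same route as the paper's proof. Both write the KKT stationarity and complementary-slackness conditions of the risk-neutral LP for $\hat\kappa$, $\kappa$ and the capacity constraint $z^t_{s,ij}\le\kappa_{s,ij}$, sum them, and collapse the result by substituting the transitions $\kappa_{s,ij}=\kappa_{s-1,ij}+\hat\kappa_{s,ij}$ and $\kappa_{s,ij}=\kappa_{s-1,ij}$.

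Your boundary-term concern is well founded. The paper makes this substitution without tracking the initial capacity, so its argument, like yours, actually yields the stated identity with $\nu_{1,ij}\kappa_{0,ij}$ added to the left-hand side. Here $\nu_{1,ij}=\hat c_{1,ij}$ whenever $\hat\kappa^*_{1,ij}>0$. The proposition as written therefore tacitly requires $\kappa_{0,ij}=0$ (or $\nu_{1,ij}=0$), and nothing in the paper absorbs the term.
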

Given the interpretation that the optimal dual variables $\bm{p}^*_s$ can be interpreted as optimal congestion tolls, Proposition \ref{prop:cost_principle} represents the following cost-recovery principle: the expected total infrastructure expansion and maintenance costs (on the left-hand side of Eq. (\ref{eq:self_financing})) equal the expected total congestion tolls paid by SAVs (on the right-hand side of Eq. (\ref{eq:self_financing})).

While a proof of this interpretation is left for future work, it has been widely discussed in the literature on the duality of SO-DTA problems (e.g., \citeauthor{akamatsu2017tradable}, \citeyear{akamatsu2017tradable}).
The operational problem in our proposed framework builds upon the SO-DTA formulation introduced by \citet{seo2022multi}.
Our preliminary work \citep{seo2024dynamic} proved the aforementioned interpretation of $\bm{p}^*_s$ corresponding to traffic capacity constraints (\ref{eq:SAV_network_capacity}).
This result is derived from the duality of the SO-DTA for SAV systems.
Our analytical approach to the infrastructure cost-recovery principle, which leverages the mathematical tractability of the MSSP framework, could be extended beyond SAV systems to a broad class of network flow problems.

%% file: ch4-3.tex
\subsection{Application to sequential route design for SAV-BRT systems}

\begin{table}[t]
	\centering
	\caption{List of variable notation for the SAV-BRT system}
	\label{tab:variable2}       
	\scalebox{0.85}{
		\begin{tabular}{llll}
			\hline\noalign{\smallskip}
			type& notation & definition \\
			\noalign{\smallskip}\hline\noalign{\smallskip}
			state variable $\bm{x}_s$ & $g^m_{s,ij}$ & the value 1 if the link $ij$ is designated as a part of $m$-th BRT route;\\ 
			&  & 0 otherwise, at stage $s$\\ 
			& $a^m_{s,i}$ & the value 1 if the node $i$ is designated as a starting point of $m$-th BRT route;\\ 
			&  & 0 otherwise, at stage $s$\\ 
			& $b^m_{s,i}$ & the value 1 if the node $i$ is designated as an end point of $m$-th BRT route;\\ 
			&  &  0 otherwise, at stage $s$\\ 
			&  $Q_{s,od}^{l}$ & expected value of travel demand with origin $o$, destination $d$, \\
			& & and desired arrival time step $l$, at stage $s$\\ \hline
			strategic variable $\bm{u}_s^{\rm F}$ &  $\hat{g}^m_{s,ij}$ & the value 1 if a $m$-th BRT lane is installed at the link $ij$;\\
			&   & 0 otherwise, at stage $s$\\
			&  $\hat{a}^m_{s,i}$ & the value 1 if a starting point of $m$-th BRT route is installed at the node $i$; \\
			&   & 0 otherwise, at stage $s$\\
			&  $\hat{b}^m_{s,i}$ & the value 1 if an end point of $m$-th BRT route is installed at the node $i$;\\
			&   & 0 otherwise, at stage $s$\\\hline
			operational variable $\bm{u}_s^{\rm G}$ & 
			$w_{s,ij}^{m,t}$ & flow of BRTs that start traveling link $ij$ on $m$-th BRT route on time step $t$ at stage $s$\\
			& $\hat{w}_{s,0i}^{m,t}$ & flow of BRTs that depart at $m$-th BRT route on time step $t$ at stage $s$\\
			& $\hat{w}_{s,i0}^{m,t}$ & flow of BRTs that arrive at $m$-th BRT route on time step $t$ at stage $s$\\
			& $z_{s,ij}^{t}$ & flow of SAVs that start traveling link $ij$ on time step $t$ at stage $s$\\
			& $\hat{z}_{s,i}$ & the number of SAVs deployed on node $i$ at stage $s$\\
			& $y_{s,d,ij}^{l,t}$ & flow of travelers who start traveling link $ij$ by SAV on time step $t$, with destination $d$\\
			& &and desired arrival time step $l$, at stage $s$ \\
			& $\tilde{y}_{s,d,ij}^{l,t}$ & flow of travelers who start traveling link $ij$ by BRT on time step $t$, with destination $d$\\
			& &and desired arrival time step $l$, at stage $s$ \\
			& $v_{s,d,i}^{l,t}$ & flow of travelers who start transferring from BRT to SAV at node $i$
			on time step $t$,\\
			& &with destination $d$ and desired arrival time step $l$, at stage $s$ \\
			& $\hat{v}_{s,d,i}^{l,t}$ & flow of travelers who start transferring from SAV to BRT at node $i$
			on time step $t$, \\
			& &with destination $d$ and desired arrival time step $l$, at stage $s$ \\
			& $q_{s,d}^{l,t}$& the number of traveler arrivals by SAV on time step $t$, with destination $d$\\
			& &and desired arrival time step $l$, at stage $s$\\
			& $\tilde{q}_{s,d}^{l,t}$& the number of traveler arrivals by BRT on time step $t$, with destination $d$\\
			& &and desired arrival time step $l$, at stage $s$\\
			\noalign{\smallskip}\hline
	\end{tabular}}
\end{table}

\begin{table}[tbhp]
	\caption{List of parameter notation for the SAV-BRT system}
	\centering
	\label{tab:parameter2}  
	\scalebox{0.85}{
		\begin{tabular}{ll}
			\hline\noalign{\smallskip}
			notation & definition \\
			\noalign{\smallskip}\hline\noalign{\smallskip}
			$\tilde{t}_{ij}$ & BRT's free-flow travel time of link $ij$ if $i\neq{j}$\\
			$\tilde{t}_{ii}$ & transferring time at node $i$\\
			$\tilde{f}_{s,ij,d}^{l}$ & unit travel cost of travelers on BRT, with destination $d$ and desired arrival time step $l$, on link $ij$ at stage $s$\\
			$\tilde{d}_{s,ij}$ & unit cost of operating BRTs on link $ij$ at stage $s$\\
			$\tilde{d}_s$ & unit maintenance cost of BRTs at stage $s$\\
			$c^{\rm BRT}_{s,ij}$ & unit cost of maintaining infrastructure of link $ij$ equipped with a BRT lane during a single period at stage $s$\\
			$\hat{c}^{\rm BRT}_{s,ij}$ & unit cost of installing a BRT lane to link $ij$ at stage $s$\\
			$\tilde{\rho}$ & passenger capacity of a BRT\\
			$\tilde{\kappa}_{ij}$ & number of SAVs which will be occupied by installing a BRT lane at the link $ij$\\
			\noalign{\smallskip}\hline
	\end{tabular}}
\end{table}

\subsubsection{Problem statement}

This subsection describes the key strategic and operational problem settings for the sequential route design problem for SAV-BRT systems.
As in the first application, we modify the original setting stated in \citet{maruyama2023integrated} to accommodate demand uncertainty and multi-stage network design.
Unless noted otherwise, all definitions, notation, and assumptions carry over from the first SAV application and are not repeated here.

The operational assumptions employed by the proposed model are as follows.
BRTs travel only on links equipped with designated BRT lanes.
BRTs are free from traffic congestion, while their average link travel time is slower than that of SAVs because BRTs stop at nodes on designated routes for travelers' transfers.
The passenger capacity of BRTs is generally larger than that of SAVs.
A traveler can move to their destination only by SAV or BRT.
Transfers between SAV and BRT are allowed at nodes on designated BRT routes with a given transfer time.
The operational decisions newly include the BRTs' schedule (such as frequency and running interval) and routing.
The operational cost components are extended to include travel costs and maintenance costs of BRTs.

The strategic assumptions employed by the proposed model are as follows.
The strategic decision is the BRT route design.
The maximum number of BRT routes on the network is given.
Unlike in the first application, link and node capacities are considered as exogenous parameters.
All links assigned to BRT routes are equipped with dedicated BRT lanes.
BRT lanes occupy part of the link, thereby reducing the link capacity available to SAVs.
Once a BRT lane is equipped on a link, it remains in service throughout the planning horizon.
The strategic costs consist of the investment costs for constructing BRT lanes.

\subsubsection{Formulation}

The FR-NDP for the SAV-BRT system based on the above problem setting is fully specified, by instantiating the subproblem functions $C^{{\rm F}/{\rm G}}_s$, $T^{{\rm F}/{\rm G}}_s$, and $U^{{\rm F}/{\rm G}}_s$ as follows:
\begin{subequations}
    \label{eq:BRT-FR-NDP}
    \begin{flalign}
        \label{eq:BRT_strategic_cost}
    	&C^{\rm F}_s:=(\bm{c}_s^{\rm BRT})^{\top}\sum_m\bm{g}^m_s+(\hat{\bm{c}}_s^{\rm BRT})^{\top}\sum_m\hat{\bm{g}}^m_{s}&\forall s\in\mathcal{S}^{\rm F},\\
        \label{eq:BRT_strategic_transition}
    	&T_s^{\rm F}:=\Bigl\{\hat{\bm{a}}_{s},\hat{\bm{b}}_{s},\bm{g}_{s-1}+\hat{\bm{g}}_{s},\bm{Q}_{s-1}+\bm{\xi}_s^{\rm F}|&\forall s\in\mathcal{S}^{\rm F},\\
        \label{eq:route}
    	&~~~~~~~~~~~~~{a}_{s,i}^m+\sum_j {g}_{s,ji}^{m}={b}_{s,i}^m+\sum_j {g}_{s,ij}^{m}&\forall m,i,s\in\mathcal{S}^{\rm F},\\
        \label{eq:BRT_binary1}
    	&~~~~~~~~~~~~~\bm{a}_s\in\{\bm{0},\bm{1}\},\bm{b}_s\in\{\bm{0},\bm{1}\},\bm{g}_s\in\{\bm{0},\bm{1}\}&\forall s\in\mathcal{S}^{\rm F},\\
    	\nonumber
    	&\Bigr\},\\
    	&U_s^{\rm F}:=\Bigl\{ \sum_i\hat{a}_{s,i}^{m}= 1& \forall m,s\in\mathcal{S}^{\rm F}, \\
    	&~~~~~~~~~~~~~\sum_i\hat{b}_{s,i}^{m}= 1& \forall m,s\in\mathcal{S}^{\rm F}, \\
        \label{eq:BRT_binary2}
    	&~~~~~~~~~~~~~\bm{\hat{a}}_s\in\{\bm{0},\bm{1}\},\bm{\hat{b}}_s\in\{\bm{0},\bm{1}\},\bm{\hat{g}}_s\in\{\bm{0},\bm{1}\} &\forall s\in\mathcal{S}^{\rm F}, &\\
    	\nonumber
    	&\Bigr\},&
    \end{flalign}
    \vspace{-4ex}
    \begin{flalign}
    	\nonumber
    	&C^{\rm G}_s:=\left(\sum_t\bm{f}_s^{\top}\bm{y}_{s}^t+\sum_t\tilde{\bm{f}}_s^{\top}\tilde{\bm{y}}_{s}^t\right)
    	+\left(\epsilon_s\sum_{t< l}\bm{1}^{\top}\bm{q}_s^t+\lambda_s\sum_{t>l}\bm{1}^{\top}\bm{q}_s^t+\epsilon_s\sum_{t< l}\bm{1}^{\top}\tilde{\bm{q}}_s^t+\lambda_s\sum_{t>l}\bm{1}^{\top}\tilde{\bm{q}}_s^t\right)&
    \end{flalign}
    \vspace{-4ex}
    \begin{flalign}
    	\label{eq:BRT_operational_cost}
    	&~~~~~~~~~~+\left(\sum_t\bm{d}_s^{\top}\bm{z}_{s}^t+\sum_t\tilde{\bm{d}}^{\top}\sum_m\bm{w}_{s}^{m,t}\right)
    	+\left({d}_s\bm{1}^{\top}\hat{\bm{z}}_{s}+\tilde{d}_s\bm{1}^{\top}\sum_{t,m}\hat{\bm{w}}_{s}^{m,t}\right)&\forall s\in\mathcal{S}^{\rm G},\\
    	\label{eq:BRT_operational_transition}
    	&T_s^{\rm G}:=\{{\bm{a}}_{s-1},{\bm{b}}_{s-1},\bm{g}_{s-1},\bm{Q}_{s-1}\}&\forall s\in\mathcal{S}^{\rm G},\\
    	\label{eq:BRT_SAV_vehicle_flow_dynamics}
    	&U_s^{\rm G}:=\Bigl\{\sum_j z_{s,ji}^{t-t_{ji}}=\sum_j z_{s,ij}^{t} - \delta_{t1} \hat{z}_{s,i}& \forall t,i,s\in\mathcal{S}^{\rm G}, \\
    	\label{eq:BRT_BRT_vehicle_flow_dynamics}
    	&~~~~~~~~~~~~~\sum_j w_{s,ji}^{m,t-\tilde{t}_{ji}}=\sum_j w_{s,ij}^{m,t} - \hat{w}_{s,0i}^{m,t} + \hat{w}_{s,i0}^{m,t}& \forall m,t,i,s\in\mathcal{S}^{\rm G}, \\
    	\label{eq:BRT_SAV_passenger_flow_dynamics}
    	&~~~~~~~~~~~~~v_{s,d,i}^{l,t-\tilde{t}_{ii}} + \sum_j y_{s,d,ji}^{l,t-t_{ji}}=\hat{v}_{s,d,i}^{l,t} + \sum_j y_{s,d,ij}^{l,t} + \delta_{id} q_{s,d}^{l,t}& \forall t,d,l,i,s\in\mathcal{S}^{\rm G}, \\
    	\label{eq:BRT_BRT_passenger_flow_dynamics}
    	&~~~~~~~~~~~~~\hat{v}_{s,d,i}^{l,t-\tilde{t}_{ii}} + \sum_j \tilde{y}_{s,d,ji}^{l,t-\tilde{t}_{ji}}={v}_{s,d,i}^{l,t} + \sum_j \tilde{y}_{s,d,ij}^{l,t} + \delta_{id} \tilde{q}_{s,d}^{l,t}& \forall t,d,l,i,s\in\mathcal{S}^{\rm G}, \\
    	\label{eq:BRT_SAV_passenger_arivals}
    	&~~~~~~~~~~~~~\sum_t q_{s,d}^{l,t} + \sum_t \tilde{q}_{s,d}^{l,t} = \sum_o (Q_{s,od}^{l} +\xi_{s,od}^{l,\rm G})&\forall d,l,s\in\mathcal{S}^{\rm G},\\
    	\label{eq:BRT_SAV_vehicle_capacity}
    	&~~~~~~~~~~~~~\sum_{d,l}{y}_{s,d,ij}^{l,t}\le \rho{z}_{s,ij}^t &\forall t,ij,s\in\mathcal{S}^{\rm G},\\
    	\label{eq:BRT_BRT_vehicle_capacity}
    	&~~~~~~~~~~~~~\sum_{d,l}{\tilde{y}}_{s,d,ij}^{l,t}\le \sum_m\tilde{\rho}{w}_{s,ij}^{t,m} &\forall t,ij,s\in\mathcal{S}^{\rm G},\\
    	\label{eq:BRT_SAV_network_capacity}
    	&~~~~~~~~~~~~~{z}_{s,ij}^t\le {\kappa}_{s,ij} - \tilde{\kappa}_{ij}g_{s,ij}^m &\forall m,t,ij,s\in\mathcal{S}^{\rm G},
 	    \end{flalign}
		\vspace{-4ex}
		\begin{flalign}
    	\label{eq:BRT_BRT_route}
    	&~~~~~~~~~~~~~0\le{w}_{s,ij}^{m,t}\le {g}_{s,ij}^m &\forall m,t,ij,s\in\mathcal{S}^{\rm G},\\
    	\label{eq:BRT_BRT_start}
    	&~~~~~~~~~~~~~0\le{w}_{s,0i}^{m,t}\le {a}_{s,i}^m &\forall m,t,i,s\in\mathcal{S}^{\rm G},\\
    	\label{eq:BRT_BRT_end}
    	&~~~~~~~~~~~~~0\le{w}_{s,i0}^{m,t}\le {b}_{s,i}^m &\forall m,t,i,s\in\mathcal{S}^{\rm G},\\
    	\label{eq:BRT_subtour}
    	&~~~~~~~~~~~~~{w}_{s,ij}^{m,1}=0 &\forall m,ij,s\in\mathcal{S}^{\rm G},\\
    	\label{eq:BRT_binary3}
    	&~~~~~~~~~~~~~\bm{w}_s\in\{\bm{0},\bm{1}\},\bm{\hat{w}}_s\in\{\bm{0},\bm{1}\} &\forall s\in\mathcal{S}^{\rm G}\\
    	\nonumber
    	&\Bigr\},&
    \end{flalign}
\end{subequations}
where, except for the binary variables defined in Eqs. (\ref{eq:BRT_binary1}), (\ref{eq:BRT_binary2}), and (\ref{eq:BRT_binary3}), all the variables are nonnegative and continuous.
The notations in scalar form are listed in Tables \ref{tab:variable2} and \ref{tab:parameter2}.
For the latter, parameter notations common to the first application are omitted.

The multi-stage BRT route design at stage $s$ is represented by $g_{s-1}$ and $\hat{g}_{s}$. $g_{s-1}$ denotes the BRT routes already installed up to the previous stage $s-1$;  $\hat{g}_{s}$ indicates the newly installed BRT lanes.
Since the start and end points of BRT routes change as the routes are extended, the route design for the start and end points is expressed solely by $\hat{a}_{s}$ and $\hat{b}_{s}$, without using state variables ${a}_{s-1}$ and ${b}_{s-1}$ at the previous stage $s-1$.

Eqs. (\ref{eq:BRT_strategic_cost}) and (\ref{eq:BRT_operational_cost}) are strategic and operational costs.
The first and second terms of Eq. (\ref{eq:BRT_strategic_cost}) are maintenance and installation costs for BRT lanes.
Each term in Eq. (\ref{eq:BRT_operational_cost}) corresponds, in order, to 
the total travel cost of travelers, 
the total schedule cost of travelers, 
the total travel cost of SAVs and BRTs, and
the total maintenance of SAVs and BRTs.
Operational action space $U^{\rm G}_s$ is constructed by Eqs. (\ref{eq:BRT_SAV_vehicle_flow_dynamics})--(\ref{eq:BRT_binary3}).
Eqs. (\ref{eq:BRT_SAV_vehicle_flow_dynamics}) and (\ref{eq:BRT_BRT_vehicle_flow_dynamics}) are the flow conservation of SAVs and BRTs.
Eqs. (\ref{eq:BRT_SAV_passenger_flow_dynamics}) and (\ref{eq:BRT_BRT_passenger_flow_dynamics}) are the flow conservation of travelers riding on SAVs and BRTs.
Eqs. (\ref{eq:BRT_SAV_vehicle_capacity}) and (\ref{eq:BRT_BRT_vehicle_capacity}) are passenger capacity constraints of SAVs and BRTs.
Eq. (\ref{eq:BRT_SAV_network_capacity}) represents traffic capacity constraints of SAVs.
Eq. (\ref{eq:BRT_BRT_route}) means that BRTs can travel on the links equipped with a BRT lane.
Eqs. (\ref{eq:BRT_BRT_start}) and (\ref{eq:BRT_BRT_end}) mean that BRTs depart from the starting point and terminate at the end point.
Eq. (\ref{eq:BRT_subtour}) is needed for the removal of subtour.
For detailed descriptions of the constraints, see \citet{maruyama2023integrated}.

Eq. (\ref{eq:BRT-FR-NDP}) is linear; thus, Condition 1 in Proposition \ref{prop:suff} is satisfied.
Furthermore, because the aforementioned problem setting meets Conditions 2, 4, and 5, we obtain the following corollary:
\begin{coro}
    \label{coro:BRT}
    Under the two following assumptions, the proposed FR-NDP for the SAV-BRT system in Eq. (\ref{eq:BRT-FR-NDP}) satisfies sufficient conditions stated in Proposition \ref{prop:suff}:
    \begin{itemize}
    \setlength{\itemsep}{0pt} \setlength{\parskip}{0pt}
        \item [$\bullet$] Realizations of $\bm{\xi}^{\rm F}$ and $\bm{\xi}^{\rm G}$ are mutually independent (i.e., stagewise independent), and
        \item [$\bullet$] $\bm{Q}_{s}$ and $\bm{\xi}_s^{\rm F}$ are integer.
    \end{itemize}
\end{coro}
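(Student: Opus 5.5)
The plan is to verify Conditions 1--6 of Proposition \ref{prop:suff} one at a time for the SAV-BRT model in Eq. (\ref{eq:BRT-FR-NDP}). Conditions 2, 4 and 5 have already been noted to follow from the problem setting: the risk measure is assumed convex and conditionally consistent, the horizon is finite, and demand growth and daily fluctuations have finitely many realizations. Condition 6 is exactly the first bullet of the corollary. So the substance lies in Conditions 1 and 3.

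For Condition 1, the approach is as follows.
\begin{itemize}
\item \textbf{Cost functions.} Both $C^{\rm F}_s$ in (\ref{eq:BRT_strategic_cost}) and $C^{\rm G}_s$ in (\ref{eq:BRT_operational_cost}) are linear in the decision and state variables. Hence they are convex, proper and lower semicontinuous (continuous) on any compact domain.
\item \textbf{Transition functions.} $T^{\rm F}_s$ in (\ref{eq:BRT_strategic_transition}) and $T^{\rm G}_s$ in (\ref{eq:BRT_operational_transition}) are linear maps ($\bm{g}_{s-1}+\hat{\bm{g}}_s$, $\bm{Q}_{s-1}+\bm{\xi}^{\rm F}_s$, and copies of states). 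The side constraints (\ref{eq:route})--(\ref{eq:BRT_binary1}) are linear equalities plus binariness, so I would move them into the stage feasible set rather than treat them as part of $T$.
\item \textbf{Action spaces.} $U^{\rm F}_s$ and $U^{\rm G}_s$ are intersections of linear (in)equalities with binary restrictions. They are therefore mixed-integer polyhedral, hence mixed-integer convex.
\item \textbf{Compactness.} The binary variables are trivially bounded. For the continuous flows, conservation (\ref{eq:BRT_SAV_vehicle_flow_dynamics})--(\ref{eq:BRT_SAV_passenger_arivals}) together with nonnegativity and the capacity bounds (\ref{eq:BRT_SAV_network_capacity})--(\ref{eq:BRT_BRT_end}) bound the vehicle flows. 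Traveler flows are then bounded through (\ref{eq:BRT_SAV_vehicle_capacity})--(\ref{eq:BRT_BRT_vehicle_capacity}). The fleet $\hat{z}$ is bounded by the node capacity $\kappa_{ii}$, inherited from the SAV model in Appendix \ref{app:SAV_BRT}.
\item \textbf{Nonemptiness.} Nonemptiness for every state is not required pointwise, since the proof of Proposition \ref{prop:suff} handles infeasibility by exact penalization. I would note that the feasible set with no BRT route and SAV-only service is nonempty, by analogy with the SAV model. This is the step where I expect the most care, so I would state explicitly the finite-horizon boundedness argument and the existence of some feasible policy.
\end{itemize}

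For Condition 3, integer variables are present ($\bm{w}$, $\hat{\bm{w}}$, $\hat{\bm{a}}$, $\hat{\bm{b}}$, $\hat{\bm{g}}$). So every state component in $\bm{x}_s=(\bm{a}_s,\bm{b}_s,\bm{g}_s,\bm{Q}_s)$ must be integer.
\begin{itemize}
\item The route states $\bm{a}_s$, $\bm{b}_s$, $\bm{g}_s$ are binary by (\ref{eq:BRT_binary1}).
\item The demand state $\bm{Q}_s$ evolves as $\bm{Q}_s=\bm{Q}_{s-1}+\bm{\xi}^{\rm F}_s$ at strategic stages and is copied at operational stages. By the second bullet ($\bm{Q}_s$ and $\bm{\xi}^{\rm F}_s$ integer, including the initial $\hat{\bm{Q}}_0$), induction over $s$ keeps $\bm{Q}_s$ integer for all $s$. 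This is precisely why the integrality assumption is imposed; note it would fail for the mean-reverting transition (\ref{eq:SAV_strategic_transition}).
\item Boundedness of $\bm{Q}_s$ over the finite horizon with finitely many realizations lets Remark \ref{rem:integer} apply: bounded integer states have exact binary expansions.
\end{itemize}

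With Conditions 1--6 all in place, Proposition \ref{prop:suff} applies and yields the corollary.
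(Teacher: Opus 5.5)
Your proposal is correct and follows the same route as the paper, whose justification is only the remark preceding the corollary. That remark says Eq.~(\ref{eq:BRT-FR-NDP}) is linear (Condition~1), the carried-over problem setting gives Conditions~2, 4 and~5, and the two bullets supply stagewise independence (Condition~6) and integrality of the state $(\bm{a}_s,\bm{b}_s,\bm{g}_s,\bm{Q}_s)$ (Condition~3), which is consistent with the additive update $\bm{Q}_s=\bm{Q}_{s-1}+\bm{\xi}^{\rm F}_s$ exactly as you argue. Your extra compactness and nonemptiness checks go beyond what the paper writes; if you keep them, note that the transfer flows $v$ and $\hat{v}$ are not bounded by the vehicle-capacity constraints but by flow conservation on the finite, forward-in-time network with positive transfer times.
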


\subsubsection{Optional extensions of the SAV-BRT formulation}

{The SAV-BRT formulation in Eq. \eqref{eq:BRT-FR-NDP} is presented in a minimal form to demonstrate how the proposed FR-NDP framework embeds an existing integrated SAV-BRT model \citep{maruyama2023integrated}.
The following examples briefly illustrate three extensions a planner might wish to impose: decommissioning BRT lanes, precluding overlap between routes, and confining SAVs to a first/last-mile feeder role.

Firstly, to let the design contract as demand declines or shifts, we introduce a decommissioning decision $\check{g}^{m}_{s,ij}\in\{0,1\}$, equal to one if the $m$-th BRT lane on link $ij$ is removed at stage $s$.
The route state transition then becomes
\begin{subequations}
    \label{eq:decom}
    \begin{flalign}
    &g^{m}_{s} = g^{m}_{s-1} + \hat{g}^{m}_{s} - \check{g}^{m}_{s}
       & \forall m,s\in\mathcal{S}^{F}, \label{eq:decom-trans}\\
    &\check{g}^{m}_{s,ij} \le g^{m}_{s-1,ij}
       & \forall m,ij,s\in\mathcal{S}^{F}, \label{eq:decom-exist}\\
    &\hat{g}^{m}_{s,ij} + \check{g}^{m}_{s,ij}\le 1
       & \forall m,ij,s\in\mathcal{S}^{F}, \label{eq:decom-excl}\\
    &\check{g}_{s}\in \{0,1\}
       & \forall s\in\mathcal{S}^{F}. \label{eq:decom-bin}
    \end{flalign}
\end{subequations}
Eq.~(\ref{eq:decom-exist}) permits removal only of lanes already in service, 
and Eq.~(\ref{eq:decom-excl}) forbids installing and removing the same lane within a stage.
Eq.~(\ref{eq:route}), which is imposed on $g^{m}_{s}$ at every stage, ensures that the surviving lanes still form admissible routes after a removal. 

Secondly, 
if a planner instead wishes to enforce spatial route diversity, the following linear constraint can be added:
\begin{flalign}
    \label{eq:nonoverlap_brt}
    &\sum_{m} g^m_{s,ij} \leq 1& \forall ij,s\in\mathcal{S}^F .
\end{flalign}
This constraint requires each link to belong to at most one BRT route at each strategic stage. 

Thirdly, 
to describe a canonical multimodal system where SAVs serve as first/last-mile feeders and BRT handles line-haul services,
we replace the single SAV traveler flow $y^{l,t}_{s,d,ij}$ by two types of flows,
an access flow $y^{\mathrm{ac},l,t}_{s,d,ij}$ and an egress flow $y^{\mathrm{eg},l,t}_{s,d,ij}$.
The access flow enters at the origin as the SAV traveler flow, and its only outflow is a transfer to BRT.
This movement can be expressed by removing the arrival term $\delta_{id}q^{l,t}_{s,d}$ as follows:
\begin{flalign}
    &\sum_{j} y^{\mathrm{ac},\,l,\,t-t_{ji}}_{s,d,ji}
    \;=\;
    \hat{v}^{\,l,t}_{s,d,i} + \sum_{j} y^{\mathrm{ac},\,l,t}_{s,d,ij}
    \qquad &\forall t,d,l,i,\, s\in\mathcal{S}^{G}.
    \label{eq:ext-feeder-access}
\end{flalign}
The egress flow enters via a transfer from BRT and terminates only at the destination.
This movement can be expressed by removing the transfer term $\hat{v}^{\,l,t}_{s,d,i}$ as follows:
\begin{flalign}
    &v^{\,l,\,t-\tilde{t}_{ii}}_{s,d,i} + \sum_{j} y^{\mathrm{eg},\,l,\,t-t_{ji}}_{s,d,ji}
    \;=\;
    \sum_{j} y^{\mathrm{eg},\,l,t}_{s,d,ij} + \delta_{id}\, q^{\,l,t}_{s,d}
    \qquad &\forall t,d,l,i,\, s\in\mathcal{S}^{G}.
    \label{eq:ext-feeder-egress}
\end{flalign}
Since the transfer flow from SAV to BRT, $\hat{v}$, appears only in the flow conservation of access travelers and
the transfer flow from BRT to SAV, ${v}$, appears only in the flow conservation of egress travelers, every traveler must traverse a BRT lane.
A pure SAV door-to-door trip is therefore infeasible by construction.

The above extensions only append linear constraints or binary variables to the original SAV-BRT formulation.
The transition functions $T^{F/G}_{s}$ remain linear and the action spaces $U^{F/G}_{s}$ remain non-empty compact mixed-integer polyhedral sets.
Consequently, Conditions~1--6 of Proposition~1 continue to hold, and Corollary~\ref{coro:BRT} guarantees the same almost-sure convergence as in our numerical experiments.}

%% file: ch5.tex
\section{Numerical experiments}
\label{sec:5}

This section presents numerical experiments to demonstrate the effectiveness of our framework.
{
Our numerical experiments have two objectives.
{First, we illustrate the convergence behavior of the lower and upper bounds predicted by Propositions 1–3.}
Second, we examine how flexibility (multi-stage strategic decisions) and reliability (risk-averse evaluation) affect strategic and operational decisions and the resulting system performance.

We emphasize that the following numerical experiments were not intended to evaluate computational scalability.
The primary contribution of the proposed framework is to obtain the policies accompanied by bounds that provably almost surely converge to the optimal value, without explicitly enumerating the full scenario tree.
Although computational scalability {with respect to the network size and the horizon length} is sacrificed, 
our numerical results can provide valuable insights into the necessity of jointly incorporating flexibility and reliability in transportation network design under optimality guarantees.
{We should note that the quantitative aspects of convergence such as the number of iterations or the computation time required to converge are left to future work. }
}

\subsection{Numerical settings}

We considered the Midtown Manhattan case shown in Figure \ref{fig:network}.
The initial value of travel demand was generated from the New York City (NYC) taxi data
between 8:00 and 9:00 on 2019-04-01 (Monday) in the area.
The total number of travelers was 5,357.
The desired arrival time for all travelers was assumed as 8:30.
Figure \ref{fig:midtown} shows the spatial distribution of passenger demand.
The network was constructed according to \citet{seo2022multi}.
{Figure \ref{fig:m_net} shows the network considered in our numerical experiments.
The number of nodes and links was 19 and 64, respectively.}
The free-flow link travel time of SAVs and BRTs was given by 5 and 10 min, respectively.
$\bm{c}_s$ and $\hat{\bm{c}}_s$ were constant during the planning horizon and were determined as proportional to land values (see Figure \ref{fig:land}).
For computational tractability, the feasible regions of capacity expansion were given by $\bm{\kappa}_{s,ii}\in[4, 120]$ and $\bm{\kappa}_{s,ij}\in[4, 60]$.
The capacity of the nodes and links in the SAV-BRT problem was given by the optimal solution of capacity expansion in the SAV system.
{The number of BRT routes was set as $m=2$.}
The other parameters were listed in Table \ref{tab:parameter_settings}.

The FR-NDPs for the SAV and SAV-BRT systems were replaced by SAA problems, with 30 samples generated from nominal distributions at each stage.
The length of the planning horizon was given by $\tau=5$.
Since the number of stages was $2\tau=10$, the total number of scenarios during the planning horizon was $30^9\simeq2\times10^{13}$.
{Although the exact number of variables differs slightly across stages, the strategic and operational subproblems for the SAV system contain approximately \(2.5\times 10^{2}\) and \(1.8\times 10^{4}\) decision variables, respectively, with approximately \(8.0\times 10^{1}\) state variables. 
For the SAV-BRT system, the corresponding numbers are approximately \(6.0\times 10^{2}\), \(4.0\times 10^{4}\), and \(2.0\times 10^{2}\), respectively. 
Thus, the largest stagewise subproblem solved by SDDP is on the order of \(10^{4}\) variables, and the total size of the overall problem with 10 stages is on the order of \(10^{5}\) variables}\footnote{
    Another approach to provide optimality guarantees is a classical scenario-based formulation.
    The typical procedure for exactly solving problems using the classical approach is to replicate stagewise decisions over the scenario tree.
    Since the total number of scenarios was approximately \(2\times 10^{13}\), the scenario-based formulation would contain on the order of \(10^{13}\times 10^{4}=10^{17}\) variable copies.
    This scale is far beyond what can be assembled or stored in memory.
    Future improvements in hardware do not remove the fundamental exponential growth of scenario-based formulations with respect to the number of stages. 
    In contrast, for the SDDP approach, the order of the number of variables grows only linearly in the number of stages.
}. 

\begin{figure}[t]
	\centering
	\begin{subfigure}{0.35\textwidth}
		\centering
		\includegraphics[width=1.0\textwidth]{m_network.pdf}
		\caption{Network.}
		\label{fig:m_net}  
	\end{subfigure} 
	\begin{subfigure}{0.3\textwidth}
		\centering
		\includegraphics[width=0.965\textwidth]{midtown_land.pdf}
		\caption{Land value distribution.}
		\label{fig:land}  
	\end{subfigure}  
	\\ \par \bigskip
	\begin{subfigure}{0.3\textwidth}
		\centering
		\includegraphics[width=1.0\textwidth]{midtown.pdf}
		\caption{Travel demand distribution.}
		\label{fig:midtown}  
	\end{subfigure} 
	\begin{subfigure}{0.4\textwidth}
		\centering
		\includegraphics[width=0.85\textwidth]{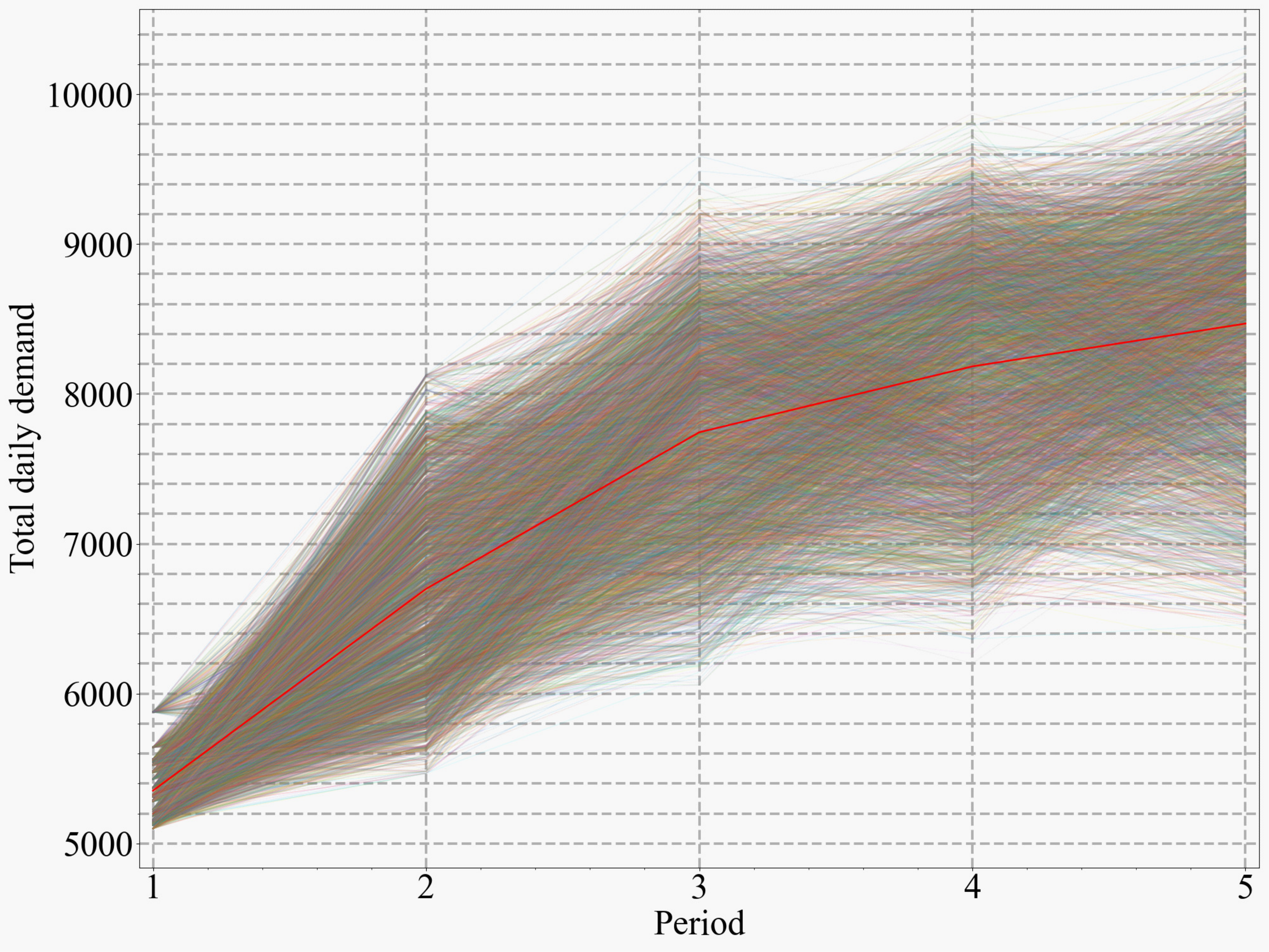}
		\caption{5,000 sample paths of total daily demand.}
		\label{fig:sample_path}  
	\end{subfigure} 
	\caption{Network and parameters in the Midtown Manhattan case.}
	\label{fig:network}  
\end{figure}

\begin{table}[t]
	\centering
	\caption{Parameter setting}
	\label{tab:parameter_settings}
	\scalebox{0.9}{
		\begin{tabular}{cccccccccccccccccccc}
			\cline{1-17}
			Parameter & $f_{s,ij}$ & $\tilde{f}_{s,ij}$ & $\epsilon_s$ & $\lambda_s$ & $d_{s,ij}$ & $\tilde{d}_{s,ij}$ & $d_{s}$ & $\tilde{d}_{s}$ & $c^{\rm BRT}_{s,ij}$ & $\hat{c}^{\rm BRT}_{s,ij}$ &  & $\rho$ & $\tilde{\rho}$ & $\tilde{\kappa}_{ij}$ & $\mu_{s,od}^{l}$             & $\phi_{s,od}^l$ &  &  &  \\ \cline{1-11} \cline{13-17}
			Value     & 30         & 30                 & 20           & 40          & 8          & 8                  & 15      & 150             & 3                    & 3                          &  & 4      & 60             & 10                    & $1.6\times\overline{Q}_{od}$ & 0.5             &  &  & \\\cline{1-17}
	\end{tabular}}
\end{table}

In the SAV case, $\xi_{s,od}^{\rm F}$ and $\xi_{s,od}^{\rm G}$ were generated from the nominal distributions given by uniform distributions $U[-\theta\overline{Q}_{od},\theta\overline{Q}_{od}]$ and $U[-\sigma \overline{Q}_{od},\sigma \overline{Q}_{od}]$, respectively.
The sample path of daily demand is shown in Figure \ref{fig:sample_path}.
In the SAV-BRT case, based on Corollary \ref{coro:BRT}, they were generated from a normal distribution and subsequently rounded to integers.
$\overline{Q}_{od}$ was given by travel demand generated from the NYC taxi data.
$\theta$ and $\sigma$ were set as the scale of fluctuations of demand growth and daily demand, respectively.
{To investigate the impact of uncertainties, the following multiple cases of $\theta$ and $\sigma$ were given: $(\theta, \sigma)\in\{{(0.2, 0.2), (0.2, 0.5), (0.2, 1.0), (0.5, 0.5), (0.7, 0.2), (0.8, 0.0)}\}$, where the base case was given by $(\theta, \sigma) = (0.2, 0.2)$.}

{
Following the propositions stated in Subsection \ref{sec: convergence}, the conditional risk measures in FR-NDPs for both the SAV and SAV-BRT systems were given by $\mathbb{ENT}_{\gamma}$.
For the SAV system, we also examined FR-NDPs employing $\mathbb{ECRM}$ with mean-$\mathbb{CV@R}_{0.75}$ as the conditional risk measures.
We set different levels of risk aversion by varying the value of $\gamma\in\{0.0, 10^{-6}, 10^{-5}, 10^{-4}\}$ and $\beta\in\{0.0, 0.4, 0.8, 1.0\}$.
Larger values of $\gamma$ and $\beta$ represent more risk-averse preferences, and both $\gamma=0$ and $\beta=0$ represent risk-neutral preferences.
While there is no clear general rule for choosing $\gamma$, there is an observation that choosing the inverse of the order of the total cost over the planning horizon could yield a policy to hedge tail risks (cf. \citealp{dowson2025incorporating}).
Based on this observation, we first computed the risk-neutral optimal policy and then set the values of $\gamma$.
The numerical results of strategic and operational decisions and the resulting system performance are reported in Subsections \ref{sec:strategic}--\ref{sec:performance} for the SAV and SAV-BRT models employing $\mathbb{ENT}_{\gamma}$ with $\gamma=10^{-5}$ as the base case.
}

{For the stopping criterion, }
we employed a stringent stopping rule to ensure that the obtained solutions are sufficiently close to the global optimum. 
{Since statistical upper bounds may fall below valid lower bounds even with a large number of Monte Carlo simulations,}
the SDDP algorithm was implemented to stop if the lower bound improves by at most 0.1 for 30 consecutive iterations.
After stopping the SDDP algorithm, we ran $5,000$ independent Monte Carlo simulations to evaluate the resulting solutions for the SAV {and SAV-BRT applications}.

The SDDP and Dual SDDP algorithms were implemented in Julia 1.9.2 using JuMP \citep{dunning2017jump} for modeling and Gurobi 11.0 as the optimizer.
For the SDDP algorithm, we employ {the version 1.13.1 of} SDDP.jl \citep{dowson2021sddp}, an open-source multi-stage stochastic programming solver.
As for the Dual SDDP algorithm, we use our own implementation developed in Julia.

%% file: ch5-1.tex
\subsection{Numerical results}

\subsubsection{Convergence results}
\label{sec:convergence}

\begin{figure}[t]
	\centering
	\begin{subfigure}{1.0\textwidth}
		\centering
		\includegraphics[width=0.85\textwidth]{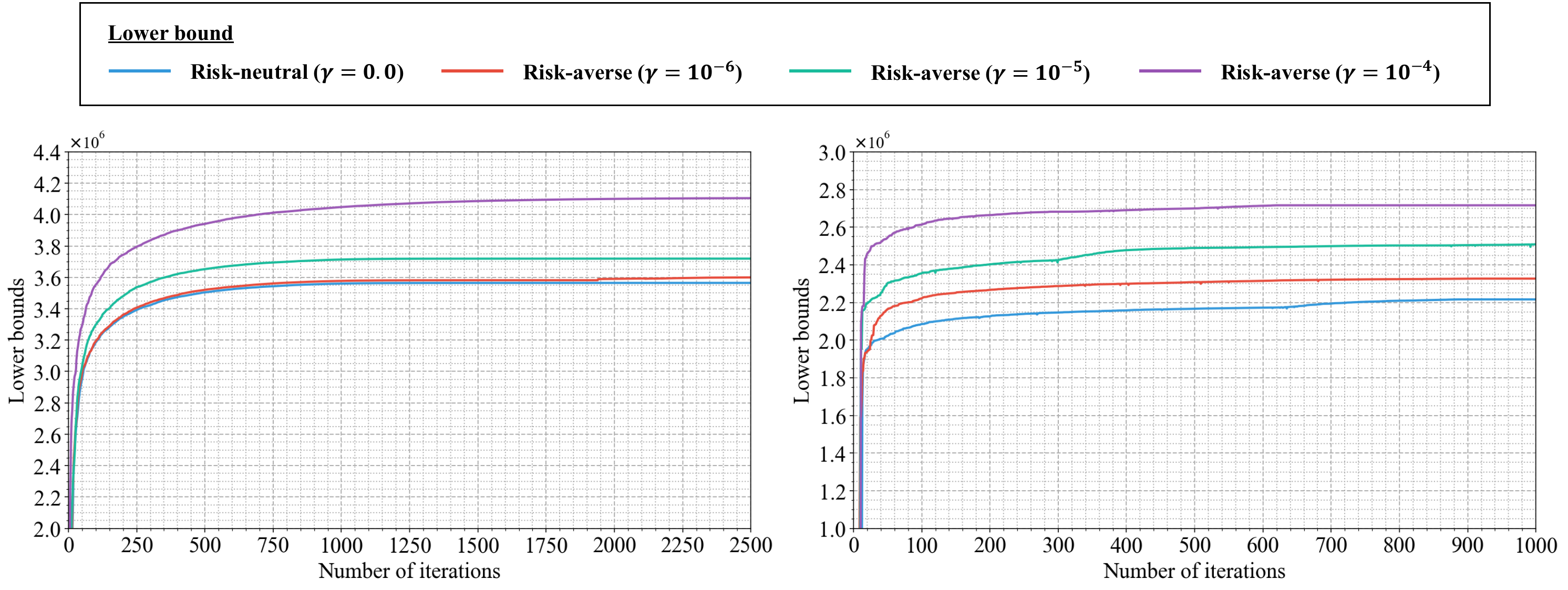}
		\caption{Convergence processes using SDDP where the risk measure is entropic (left: SAV, right: SAV-BRT).}
		\label{fig:convergence}  
	\end{subfigure}
	\\ \par \bigskip
	\begin{subfigure}{1.0\textwidth}
		\centering
		\includegraphics[width=0.85\textwidth]{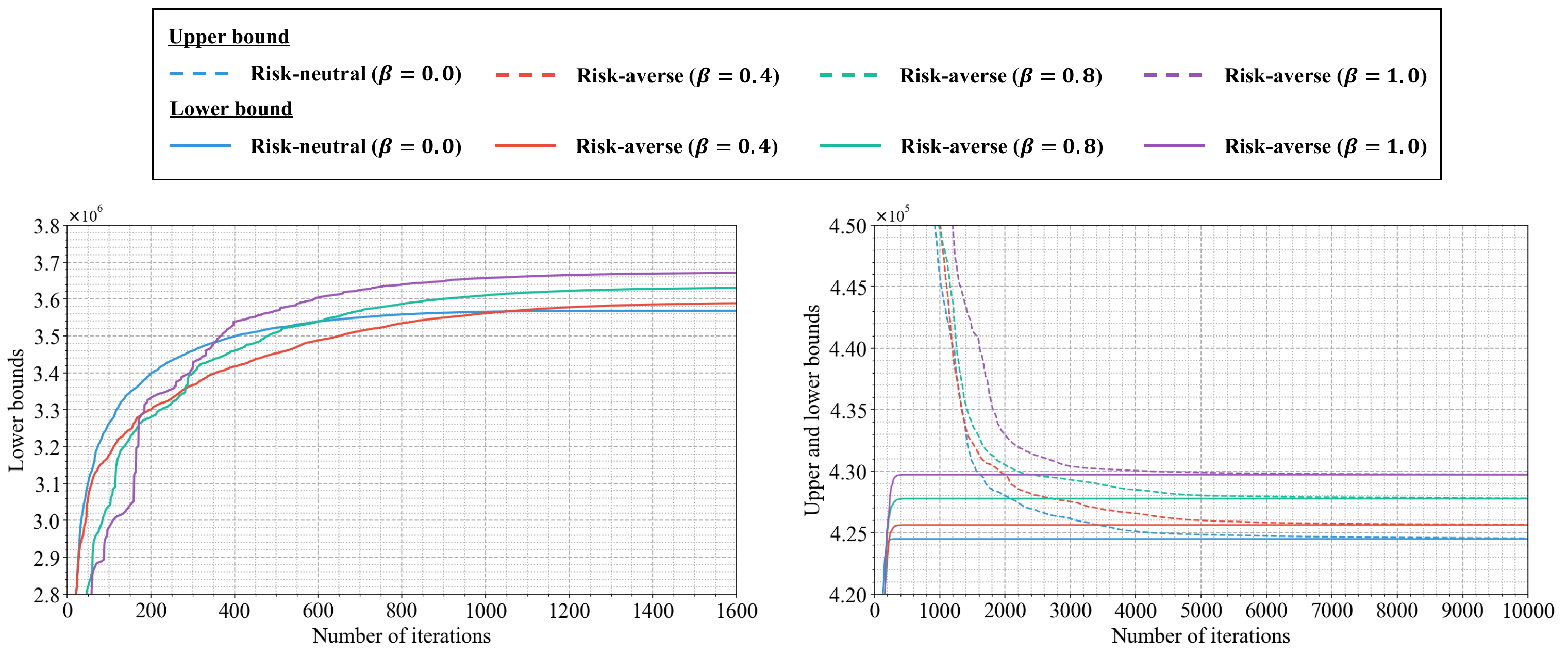}
		\caption{Convergence processes for the SAV systems where the risk measure is $\mathbb{ECRM}$ with mean-$\mathbb{CV@R}$ (left: SDDP, right: Dual SDDP).}
		\label{fig:convergence_ECRM}  
	\end{subfigure}
	\caption{Convergence processes of the deterministic lower and upper bounds.}
	\label{fig:convergence0}  
\end{figure}

\begin{table}[t]
	{
	\centering
	\caption{Performance of SDDP algorithm.}
	\label{tab:gap}
        \begin{subtable}[t]{1.0\textwidth}
        \centering
            \caption{SAV system}
            \label{tab:gap-sav}
            \centering
    		\begin{tabular}{clccclcc}
                \hline
    			&  & & \multicolumn{2}{c}{$M=100$}                           &  & \multicolumn{2}{c}{$M=5000$}                                          \\ \cline{4-5} \cline{7-8} 
    			$\mathbb{F}$                    &  & LB                  & UB                  & Gap          &  & UB                     & Gap                     \\ \cline{1-1} \cline{3-5} \cline{7-8} 
    			$\mathbb{E}$                    & & $3.564\times10^{6}$  & $3.571\times10^{6}$  & $~~~0.166\%$ &  & $3.568\times10^{6}$  & $0.082\%$            \\
    			$\mathbb{ENT}~(\gamma=10^{-6})$ & & $3.599\times10^{6}$  & $3.584\times10^{6}$  & $-0.366\%$ &  & $3.619\times10^{6}$  & $0.474\%$             \\
    			$\mathbb{ENT}~(\gamma=10^{-5})$ & & $3.719\times10^{6}$  & $3.697\times10^{6}$  & $-0.542\%$ &  & $3.728\times10^{6}$  & $0.210\%$             \\
    			$\mathbb{ENT}~(\gamma=10^{-4})$ & & $4.105\times10^{6}$  & $3.852\times10^{6}$  & $-6.147\%$ &  & $4.107\times10^{6}$  & $0.048\%$              \\
    			$\mathbb{ECRM}~(\beta=0.4)$     & & $3.590\times10^{6}$  & $3.624\times10^{6}$  & $~~~0.930\%$ &  & $3.610\times10^{6}$  & $0.538\%$ \\
    			$\mathbb{ECRM}~(\beta=0.8)$     & & $3.631\times10^{6}$  & $3.673\times10^{6}$  & $~~~1.145\%$ &  & $3.647\times10^{6}$  & $0.426\%$ \\
    			$\mathbb{ECRM}~(\beta=1.0)$     & & $3.671\times10^{6}$  & $3.715\times10^{6}$  & $~~~1.202\%$ &  & $3.703\times10^{6}$  & $0.880\%$ \\ \hline
    	   \end{tabular}
        \end{subtable} \\
        \vspace{2ex}
        \begin{subtable}[t]{1.0\textwidth}
        \centering
            \caption{SAV-BRT system}
            \label{tab:gap-sav-brt}
            \centering
    		\begin{tabular}{clccclcc}
                \hline
    			&  & & \multicolumn{2}{c}{$M=100$}                           &  & \multicolumn{2}{c}{$M=5000$}                                          \\ \cline{4-5} \cline{7-8} 
    			$\mathbb{F}$                    &  & LB                  & UB                  & Gap          &  & UB                     & Gap                     \\ \cline{1-1} \cline{3-5} \cline{7-8} 
    			$\mathbb{E}$                    & & $2.216\times10^{6}$  & $2.246\times10^{6}$  & $~~~1.349\%$ &  & {$2.252\times10^{6}$}     & {$1.622\%$}            \\
    			$\mathbb{ENT}~(\gamma=10^{-6})$ & & $2.327\times10^{6}$  & $2.321\times10^{6}$  & $-0.254\%$ &  & {$2.351\times10^{6}$}     & {$1.066\%$}              \\
    			$\mathbb{ENT}~(\gamma=10^{-5})$ & & $2.509\times10^{6}$  & $2.450\times10^{6}$  & $-2.338\%$ &  & {$2.529\times10^{6}$}     & {$0.800\%$}              \\
    			$\mathbb{ENT}~(\gamma=10^{-4})$ & & $2.716\times10^{6}$  & $2.634\times10^{6}$  & $-3.028\%$ &  & {$2.720\times10^{6}$}     & {$0.134\%$}              \\ \hline
    	   \end{tabular}
        \end{subtable}
    }
\end{table}

Figures \ref{fig:convergence} and \ref{fig:convergence_ECRM} illustrate the convergence process with the SDDP and Dual SDDP algorithms.
{
	Figure \ref{fig:convergence} plots the deterministic lower bounds obtained by SDDP for the FR-NDP with entropic risk measures (left: SAV; right: SAV-BRT).
}
Figure \ref{fig:convergence_ECRM} reports the results for the FR-NDP with ECRMs.
{
The left panel of Figure \ref{fig:convergence_ECRM} shows the lower bounds obtained by SDDP, 
whereas the right panel of Figure \ref{fig:convergence_ECRM} shows the deterministic lower and upper bounds obtained by Dual SDDP.
}
Because Dual SDDP is computationally intensive, it was tested on a toy network with 10 links and 5 nodes.
{
The horizontal axis indicates the number of iterations, and the vertical axis shows the upper and lower bounds at each iteration.
Solid lines indicate lower bounds, and dashed lines indicate upper bounds (when shown).
The colors of these lines correspond to different risk-aversion parameter settings.
}

From Figures \ref{fig:convergence} and \ref{fig:convergence_ECRM}, we observe that the lower bounds generated by SDDP monotonically increase,
{while the deterministic upper bounds generated by Dual SDDP monotonically decrease, as the number of iterations increases.
The right panel of Figure \ref{fig:convergence_ECRM} also illustrates that the deterministic lower and upper bounds converge to the same value.
These behaviors are consistent with Propositions \ref{prop:suff} and \ref{prop:suff3}.}
The number of iterations required for the deterministic upper bound to stabilize is substantially higher under Dual SDDP than under the standard counterpart, consistent with previous findings (cf. \citealp{guigues2023duality} and \citealp{da2023dual}).
{
Finally, the converged bounds increase with increasing risk-aversion parameters $\gamma$ and $\beta$, reflecting the more conservative objective induced by stronger risk aversion.}

{
Table \ref{tab:gap} reports the upper and lower bounds for the optimal value obtained by the SDDP algorithm.
{Tables \ref{tab:gap-sav} and \ref{tab:gap-sav-brt} show the results for the SAV and SAV-BRT systems, respectively.}
The first column lists the type of risk measure $\mathbb{F}$ used in the objective.
UB and LB denote the 95\% confidence statistical upper bound and deterministic lower bound, respectively, when the stopping criterion is satisfied.
The statistical upper bound is reported as the 95\% confidence upper bound, constructed using samples of total costs obtained from independent Monte Carlo simulations.
Gap is the relative gap between the bounds, ${\rm (UB - LB)/LB\times100}$.
{Columns 3–4 and Columns 5–6 correspond to the results with the number of Monte Carlo replications $M=100$ and $M=5,000$, respectively.}
{Distributional information on total costs obtained from Monte Carlo simulations of the SAV and SAV-BRT systems is provided in Appendix \ref{app:TotalCost}.}
Note that Table \ref{tab:gap} is not intended to compare the performance of the two systems, as they have different objective functions.
}

{
From Table~\ref{tab:gap}, we first focus on the results obtained with $M=5{,}000$.
For the SAV and SAV-BRT systems, the relative gap between the statistical UB and the deterministic LB remains below 1.7\% in all experiments with different choices of the risk-aversion parameters $\gamma$ and $\beta$.
These results indicate that, when the policies are evaluated with a sufficiently large number of Monte Carlo replications,
the lower and statistical upper bounds are close at termination for both applications.
}

{Comparing $M=100$ with $M=5{,}000$ further shows that the reliability of the statistical UB depends on both the risk measure and the number of Monte Carlo simulations. 
For the case with $M=100$, the UB falls below the deterministic LB in the cases with entropic risk measures for both systems, yielding negative apparent gaps. 
Moreover, the inversion deepens as $\gamma$ increases, reaching $-6.147\%$ for the SAV system and $-3.028\%$ for the SAV-BRT system at $\gamma=10^{-4}$. 
These observations are consistent with the asymptotic nature of Proposition~\ref{prop:suff2}: the statistical upper bound is guaranteed to be reliable only in the large samples, particularly in the case of strong risk aversion.}

\begin{figure}[p]
	\centering
	\begin{subfigure}{0.95\textwidth}
			\centering
			\includegraphics[width=1.0\textwidth]{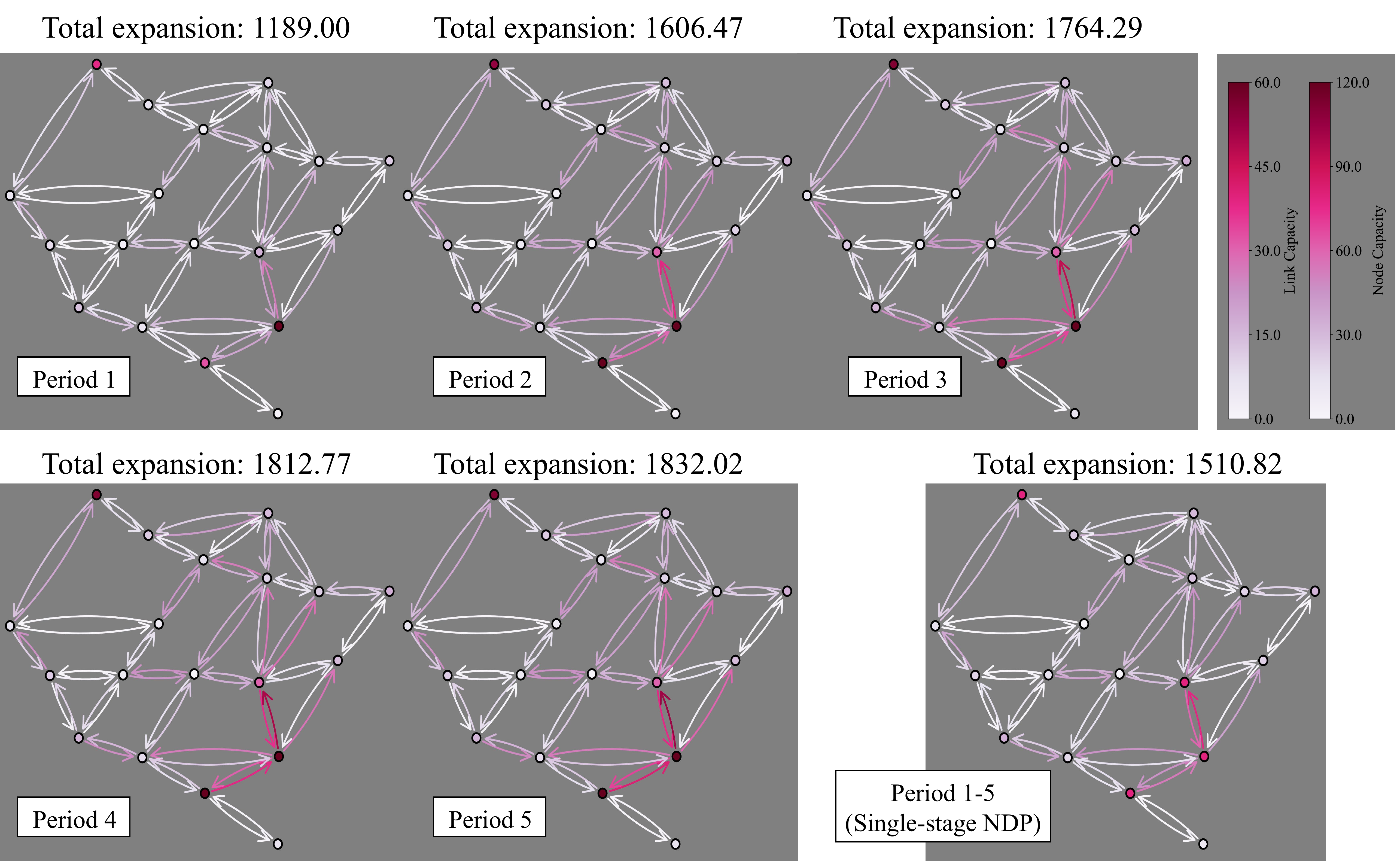}
			\caption{Risk-neutral case ($\gamma = 0.0$)}
			\label{fig:SAVinfra_neutral}  
		\end{subfigure}
	\\ \par \bigskip
	\begin{subfigure}{0.95\textwidth}
			\centering
			\includegraphics[width=1.0\textwidth]{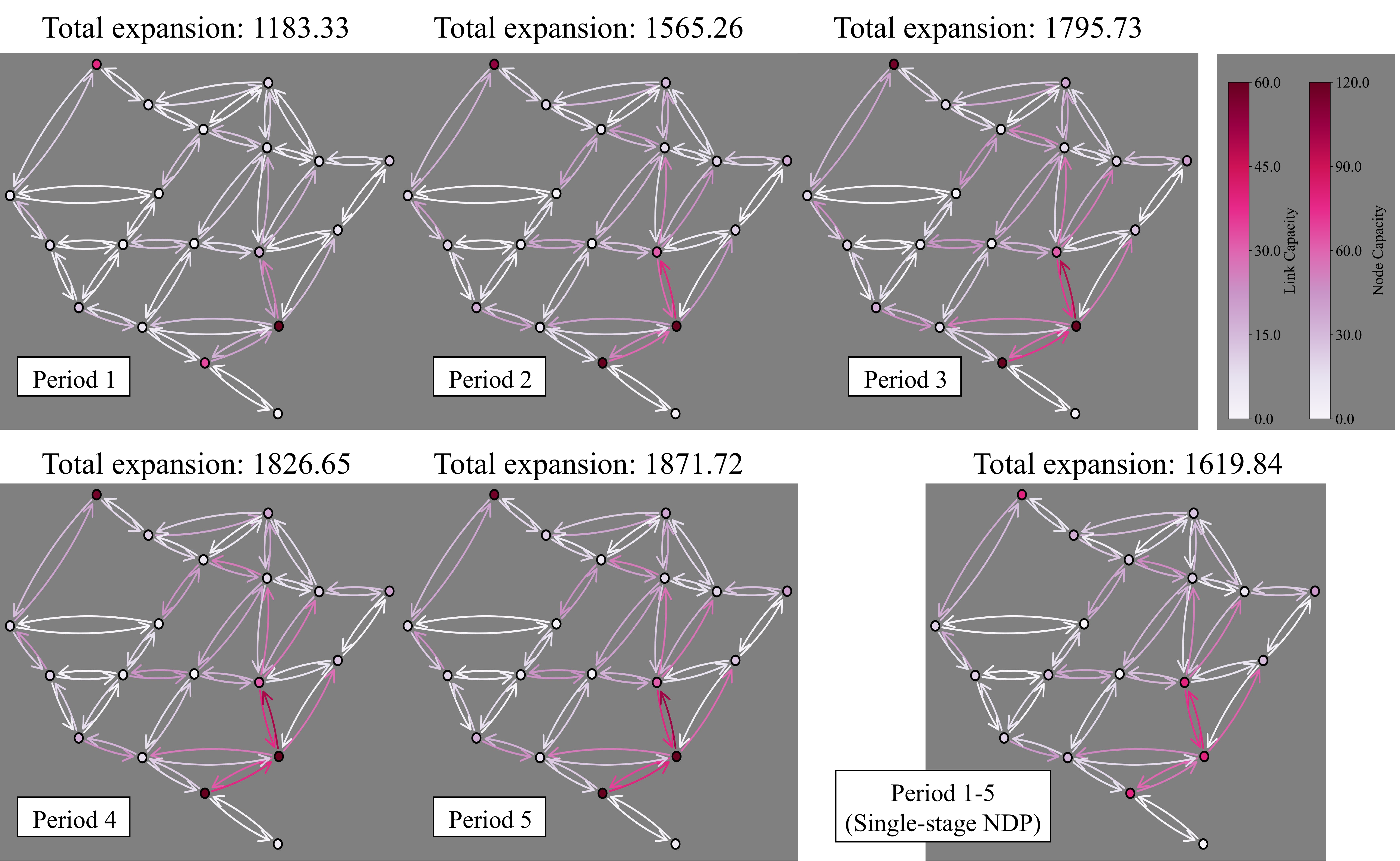}
			\caption{Risk-averse case ($\gamma = 10^{-5}$)}
			\label{fig:SAVinfra_averse}  
		\end{subfigure}
	\caption{Capacity expansion pattern and total expansion in the SAV system.}
	\label{fig:SAVinfra}  
\end{figure}

%% file: ch5-2.tex
\subsubsection{Strategic decisions}
\label{sec:strategic}

Figure \ref{fig:SAVinfra} describes the strategic decisions obtained from the FR-NDP for the SAV system.
Figures \ref{fig:SAVinfra_neutral} and \ref{fig:SAVinfra_averse} correspond to the risk-neutral and risk-averse cases, respectively.
In each figure, the colors of the links and nodes represent the optimal state variables of capacities in the scenario with the largest daily demand among the 5,000 scenarios; darker shades represent larger capacities.
The bottom-right panel shows the single-stage network design, whereas the remaining panels present the multi-stage counterparts.
The numbers above each panel are the sum of the optimal capacity state variables, indicating the total network capacity in the corresponding period.

\begin{figure}[t]
	\centering
	\begin{subfigure}{0.32\textwidth}
		\centering
		\includegraphics[width=1.0\textwidth]{TotalCapacityExpansionPeriod1.pdf}
		\caption{Period 1}
		\label{fig:TotalCapacityExpansion1}  
	\end{subfigure}
	\begin{subfigure}{0.32\textwidth}
		\centering
		\includegraphics[width=1.0\textwidth]{TotalCapacityExpansionPeriod2.pdf}
		\caption{Period 2}
		\label{fig:TotalCapacityExpansion2}  
	\end{subfigure}
	\begin{subfigure}{0.32\textwidth}
		\centering
		\includegraphics[width=1.0\textwidth]{TotalCapacityExpansionPeriod3.pdf}
		\caption{Period 3}
		\label{fig:TotalCapacityExpansion3}  
	\end{subfigure} \\ \par \bigskip
	\begin{subfigure}{0.32\textwidth}
		\centering
		\includegraphics[width=1.0\textwidth]{TotalCapacityExpansionPeriod4.pdf}
		\caption{Period 4}
		\label{fig:TotalCapacityExpansion4}  
	\end{subfigure} 
	\begin{subfigure}{0.32\textwidth}
		\centering
		\includegraphics[width=1.0\textwidth]{TotalCapacityExpansionPeriod5.pdf}
		\caption{Period 5}
		\label{fig:TotalCapacityExpansion5}  
	\end{subfigure}
	\begin{subfigure}{0.32\textwidth}
		\centering
		\includegraphics[width=1.0\textwidth]{TotalCapacityExpansion.pdf}
		\caption{During the planning horizon}
		\label{fig:TotalCapacityExpansion0}  
	\end{subfigure}
	\caption{Sensitivity analysis of total capacity expansion with respect to $\gamma$ in the SAV system.}
	\label{fig:TotalCapacityExpansion}  
\end{figure}

\begin{figure}[p]
	\centering
	\begin{subfigure}{0.95\textwidth}
		\centering
		\includegraphics[width=1.0\textwidth]{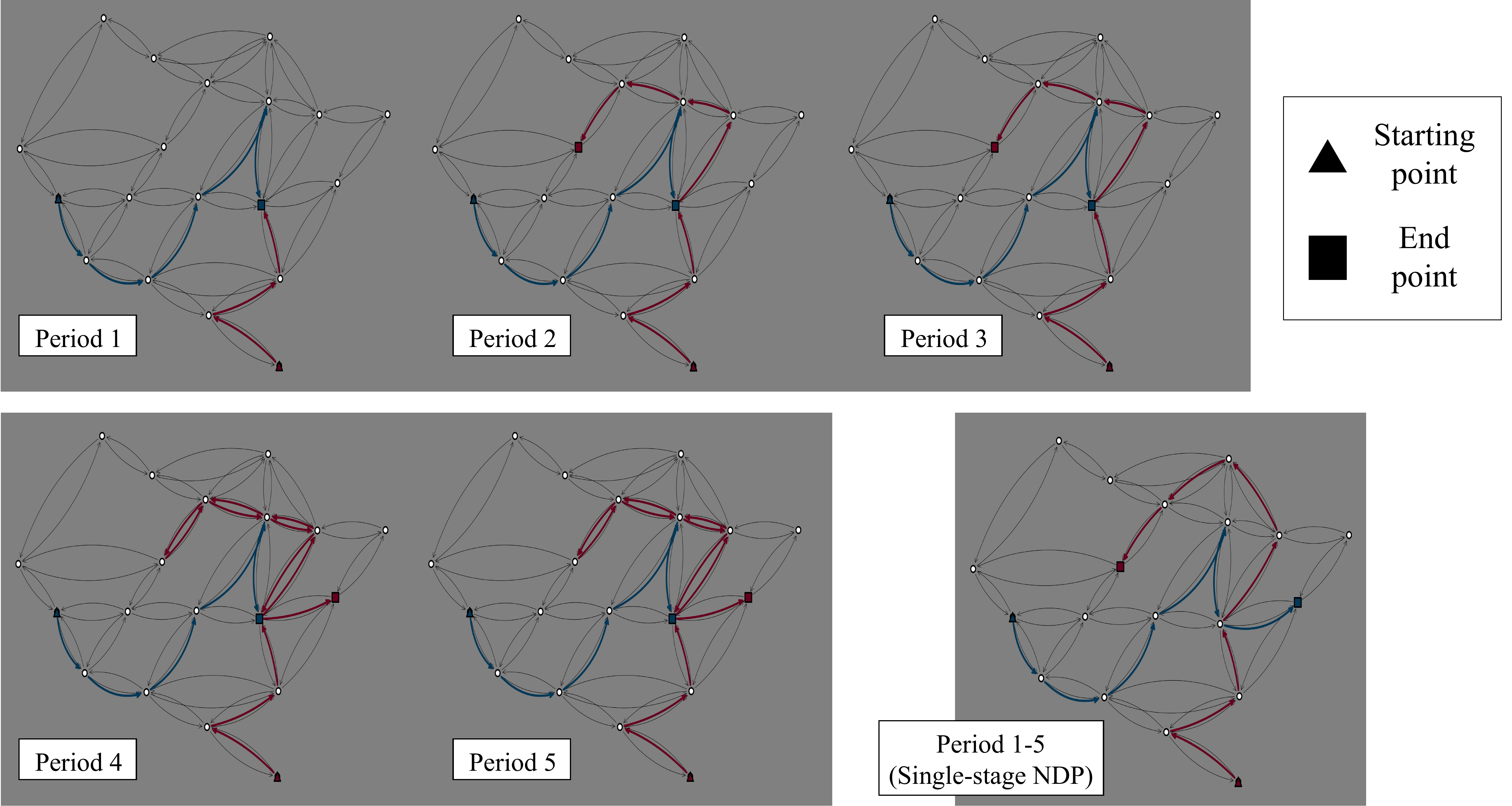}
		\caption{Risk-neutral case ($\gamma = 0.0$)}
		\label{fig:BRTinfra_neutral}  
	\end{subfigure}
	\\ \par \bigskip
	\begin{subfigure}{0.95\textwidth}
		\centering
		\includegraphics[width=1.0\textwidth]{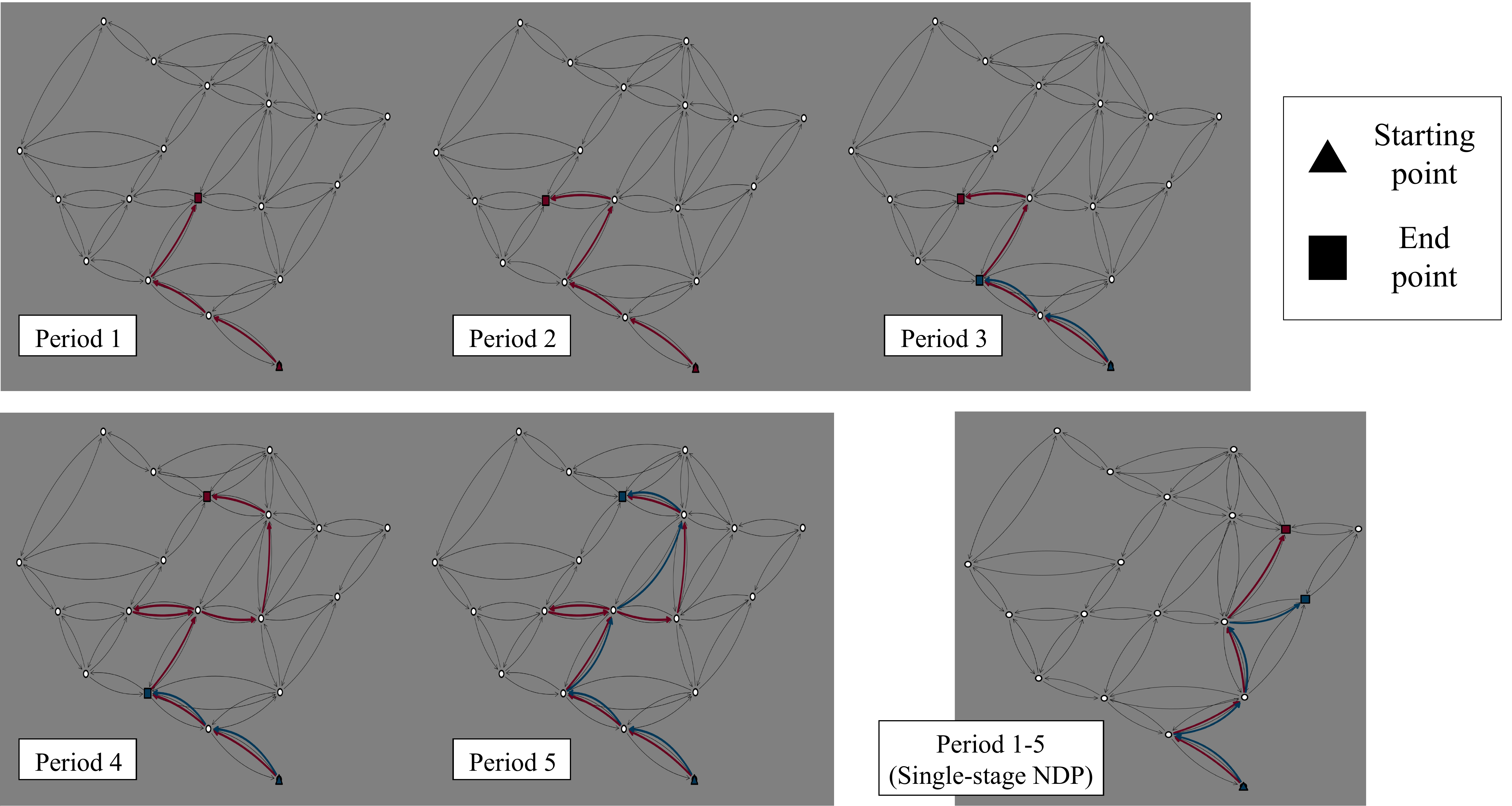}
		\caption{Risk-averse case ($\gamma = 10^{-5}$)}
		\label{fig:BRTinfra_averse}  
	\end{subfigure}
	\caption{BRT route deployment pattern.}
	\label{fig:BRTinfra}  
\end{figure}

Figure \ref{fig:SAVinfra} demonstrates that introducing multi-stage strategic decisions increases the end-of-horizon network capacity regardless of risk preference.
This increase can be attributed to the ability of multi-stage investments to reduce intermediate infrastructure maintenance costs through more flexible investment timing.
Comparing the single-stage case with its multi-stage counterpart reveals distinct spatial and temporal investment patterns.
In the single-stage case, investment concentrates at nodes 107, 137, and 170 in the bottom right area and at node 50 in the top left.
In the multi-stage case, node 137 receives substantial investment from period 1, whereas the other nodes expand gradually over subsequent periods.
Furthermore, comparing Figures \ref{fig:SAVinfra_neutral} and \ref{fig:SAVinfra_averse} reveals that, in both the single- and multi-stage cases, infrastructure investments are generally larger under the risk-averse setting.
This reflects a risk-averse strategy that aims to hedge against high operational costs in severe scenarios encountered during the subsequent operational stages.
Notably, the increase in infrastructure investment due to risk-averse preferences is more pronounced in the single-stage model than in its multi-stage counterpart.
This observation suggests that single-stage network designs bear greater risk, and that incorporating flexibility into strategic decisions can alleviate such risks.
This suggestion is further supported by examining the investment sequence under risk-neutral and risk-averse settings.
From a comparison between Figures \ref{fig:SAVinfra_neutral} and \ref{fig:SAVinfra_averse}, infrastructure investments in periods 1 and 2 are slightly higher under the risk-neutral case, which contrasts with the investment pattern observed in single-stage designs.
This result implies that the risk-averse planner avoids the risk of over-supplying infrastructure early in the planning horizon, while preserving room to adjust later.

{
Figure \ref{fig:TotalCapacityExpansion} presents a sensitivity analysis of total capacity expansion with respect to the risk-aversion parameter $\gamma$.
Figures \ref{fig:TotalCapacityExpansion1}--\ref{fig:TotalCapacityExpansion5} show the sum of traffic and storage capacity added at each stage,
while Figure \ref{fig:TotalCapacityExpansion0} shows the cumulative capacity level at the terminal stage.
The decisions of capacity expansion, dependent on uncertain demand, are visualized in Figures \ref{fig:TotalCapacityExpansion2}--\ref{fig:TotalCapacityExpansion0} in the form of probability distributions derived from the kernel density estimators, with overlaid box plots.
In Figure \ref{fig:TotalCapacityExpansion0}, the blue areas and red lines correspond to the multi- and single-stage network designs, respectively.}
The vertical axis denotes the optimal capacity variables,
{and the horizontal axis denotes the risk-aversion parameter $\gamma$.}

{
Figure \ref{fig:TotalCapacityExpansion0} shows that the total infrastructure investment increases as risk-averse preferences become stronger and as multi-stage strategic decisions are introduced, similar to the observations in Figure \ref{fig:SAVinfra}. 
Also,}
as shown in Figure \ref{fig:TotalCapacityExpansion1}--\ref{fig:TotalCapacityExpansion5}, the overall distributions reveal that under the risk-averse setting, infrastructure investments in periods 1 and 2 are suppressed to allow for greater flexibility in later periods.
This pattern highlights the planner's risk-hedging strategy to preserve adjustment capacity under uncertainty.
Interestingly, the optimal investment strategy does not always exhibit a simple increasing trend over time, even as risk is gradually resolved in later periods.
This non-monotonic behavior underscores the necessity of the proposed framework for modeling long-term infrastructure planning with flexibility and reliability.

\begin{figure}[t]
	\centering
	\includegraphics[width=0.95\textwidth]{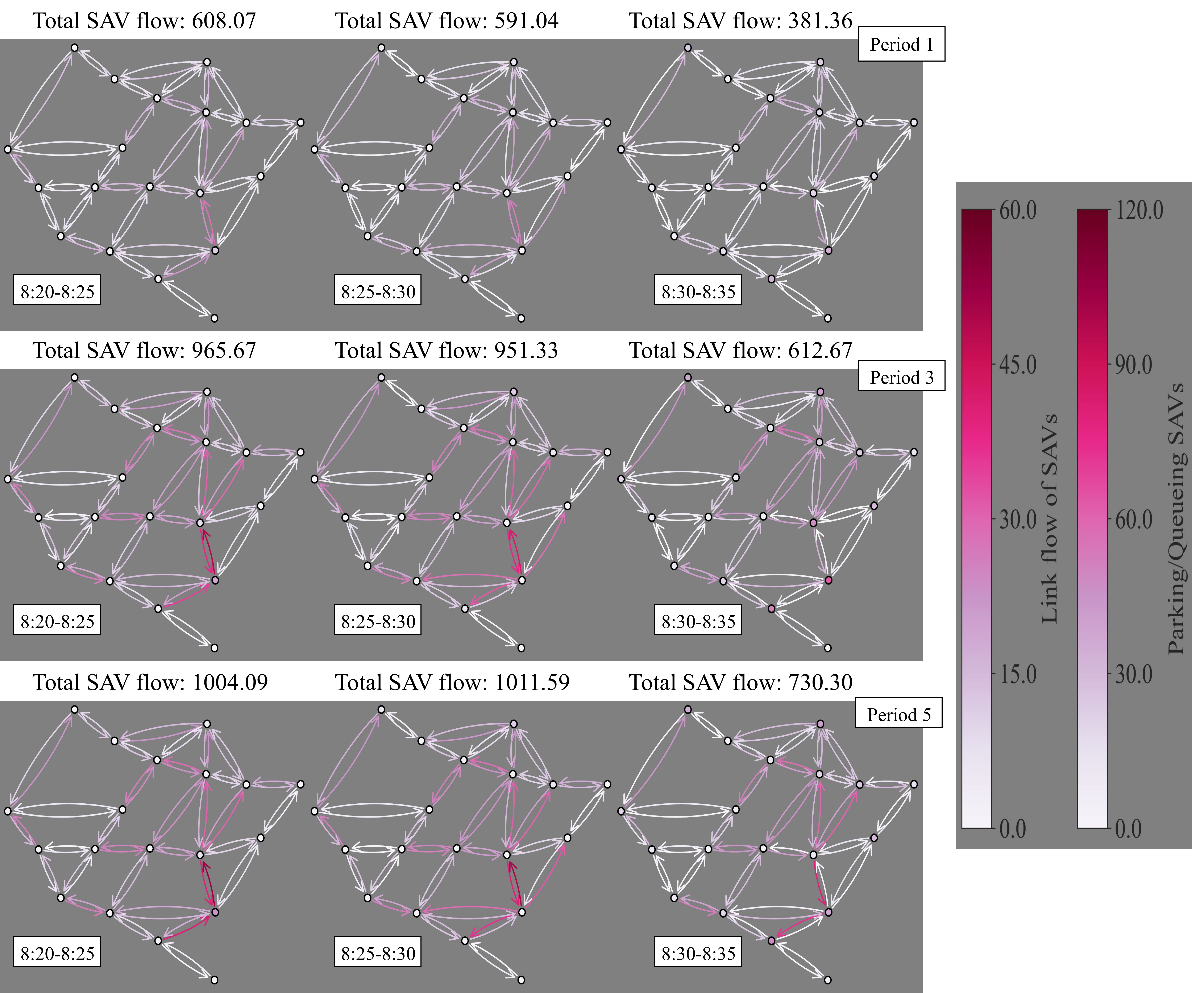}
	\caption{Dynamic flow of SAVs in the scenario with the largest travel demand under the multi-stage network design {with $\gamma=10^{-5}$} (left: 8:20-8:25, center: 8:25-8:30, right: 8:30-8:35).}
	\label{fig:SAVflow_multi}  
\end{figure}

Figure \ref{fig:BRTinfra} illustrates the optimal BRT routes obtained from the FR-NDP in the SAV-BRT system.
Each route begins at the triangle node and ends at the square node.
As in Figure \ref{fig:SAVinfra}, two colored links depict the optimal BRT routes for the scenario with the largest daily demand.
Figures \ref{fig:BRTinfra_neutral} and \ref{fig:BRTinfra_averse} correspond to the risk-neutral and risk-averse cases, respectively.
The bottom-right panel of each subfigure corresponds to the single-stage network design, while the remaining panels represent its multi-stage counterparts.
Figure \ref{fig:BRTinfra} shows that, as with capacity expansion for SAV systems, multi-stage planning yields longer BRT routes at the end of the horizon, regardless of risk preference.
This difference between single- and multi-stage designs is more pronounced for the BRT route planning, which may be attributed to the binary nature of the route decision variables.
Meanwhile, comparing Figures \ref{fig:BRTinfra_neutral} and \ref{fig:BRTinfra_averse} demonstrates that risk aversion leads to shorter BRT routes, unlike the capacity expansion shown in Figure \ref{fig:SAVinfra}.
{Furthermore, from the comparison, we observe links where two routes overlap in the risk-averse scenario.
These results reflect} the relatively limited spatial flexibility of BRT operations.
Once a fixed route is built, it offers limited ability to hedge against operational risks; thus, a risk-averse planner tends to invest less in BRT, {focusing instead on core route segments with higher demand certainty}, while relying more {heavily and extensively} on SAV capacity.
This observation will be further supported by the analysis of operational decisions presented in the following subsection.

\begin{figure}[t]
	\centering
	\includegraphics[width=0.95\textwidth]{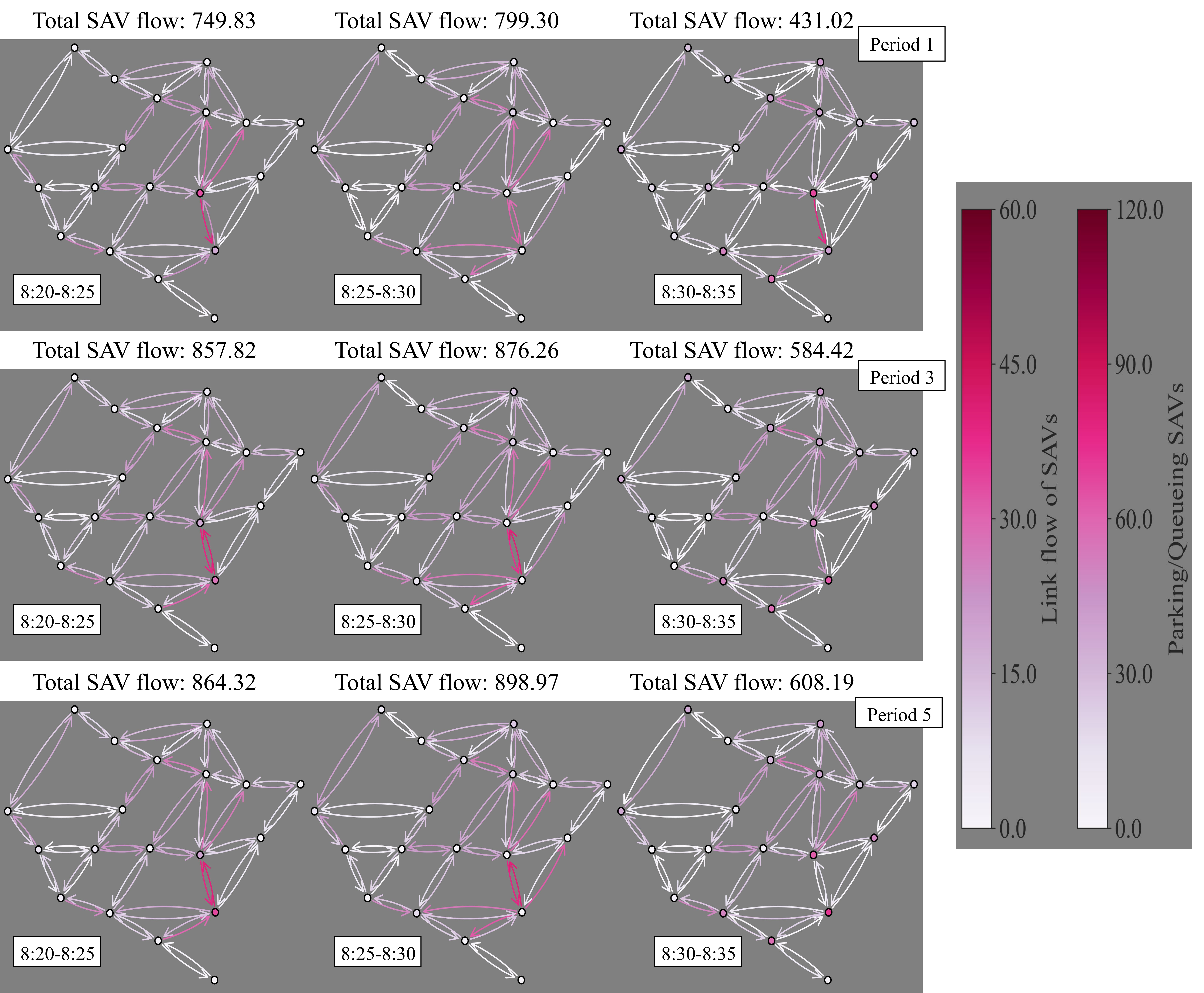}
	\caption{Dynamic flow of SAVs in the scenario with the largest travel demand under the single-stage network design {with $\gamma=10^{-5}$} (left: 8:20-8:25, center: 8:25-8:30, right: 8:30-8:35).}
	\label{fig:SAVflow_single}  
\end{figure}

\subsubsection{Operational decisions}

Figures \ref{fig:SAVflow_multi} and \ref{fig:SAVflow_single} present the operational decisions obtained from the FR-NDP for the SAV system.
Figures \ref{fig:SAVflow_multi} and \ref{fig:SAVflow_single} show the optimal SAV flows around the desired arrival time of travelers (8:30) under the multi- and single-stage network design with $\gamma=10^{-5}$.
Each subfigure presents the operational decisions at three different time steps: the top corresponds to period 1, the middle to period 3, and the bottom to period 5.
The colors of the links and nodes depict, respectively, link traffic volume and the number of parking or queueing vehicles at the node, under the same scenario used in {Figure} \ref{fig:SAVinfra}.
The number above each subfigure is the total SAV flow in the corresponding {time step}.

From Figures \ref{fig:SAVflow_multi} and \ref{fig:SAVflow_single} we observe that SAV link flows decrease after the desired travelers' arrival time (8:30), demonstrating reasonable responsiveness of the operational decisions to temporal demand patterns.
Comparing Figures \ref{fig:SAVflow_multi} and \ref{fig:SAVflow_single}, flows in both cases rise with demand growth over the planning horizon; however, the change is muted in the single-stage design and much more pronounced in the multi-stage design.
This implies that in the single-stage setting, SAV operations are constrained by the static network capacities determined at the beginning of the planning horizon, which limits the ability of vehicle operations to accommodate demand fluctuations and provide smooth travel services to travelers.
In contrast, the multi-stage setting allows for capacity expansions over time, as illustrated in Figure \ref{fig:TotalCapacityExpansion}, enabling the operations manager to provide more responsive transportation services with increasing demand.

\begin{figure}[t]
	\centering
	\begin{subfigure}{0.32\textwidth}
		\centering
		\includegraphics[width=1.0\textwidth]{OperationalCostPeriod1.pdf}
		\caption{Period 1}
		\label{fig:OperationalCostPeriod1}  
	\end{subfigure}
	\begin{subfigure}{0.32\textwidth}
		\centering
		\includegraphics[width=1.0\textwidth]{OperationalCostPeriod2.pdf}
		\caption{Period 2}
		\label{fig:OperationalCostPeriod2}  
	\end{subfigure} 
	\begin{subfigure}{0.32\textwidth}
		\centering
		\includegraphics[width=1.0\textwidth]{OperationalCostPeriod3.pdf}
		\caption{Period 3}
		\label{fig:OperationalCostPeriod3}  
	\end{subfigure} \\ \par \bigskip
	\begin{subfigure}{0.32\textwidth}
		\centering
		\includegraphics[width=1.0\textwidth]{OperationalCostPeriod4.pdf}
		\caption{Period 4}
		\label{fig:OperationalCostPeriod4}  
	\end{subfigure} 
	\begin{subfigure}{0.32\textwidth}
		\centering
		\includegraphics[width=1.0\textwidth]{OperationalCostPeriod5.pdf}
		\caption{Period 5}
		\label{fig:OperationalCostPeriod5}  
	\end{subfigure}
	\begin{subfigure}{0.32\textwidth}
		\centering
		\includegraphics[width=1.02\textwidth]{OperationalCost.pdf}
		\caption{During the planning horizon}
		\label{fig:OperationalCost0}  
	\end{subfigure} 
	\caption{Sensitivity analysis of operational cost at each period with respect to $\gamma$ in the SAV system.}
	\label{fig:OperationalCost}  
\end{figure}

\begin{figure}[t]
	\centering
	\begin{subfigure}{0.95\textwidth}
		\centering
		\includegraphics[width=0.95\textwidth]{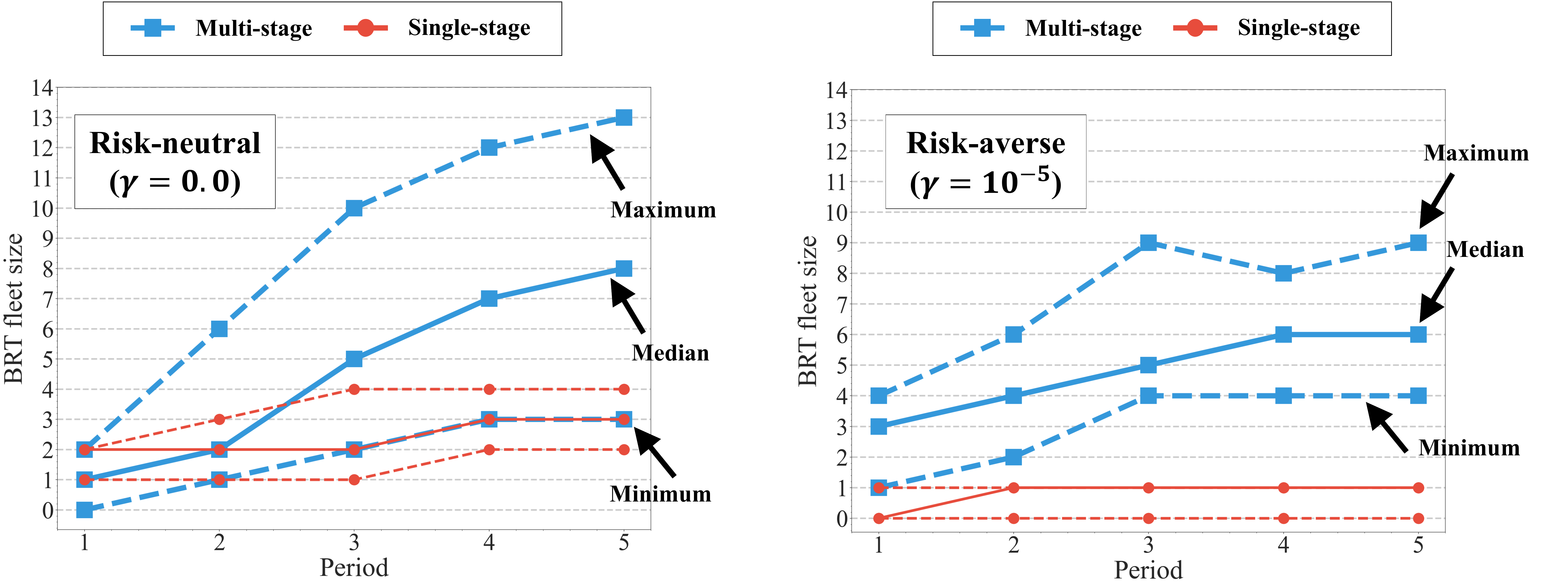}
		\caption{BRT fleet size (left: risk-neutral ($\gamma=0.0$), right: risk-averse ($\gamma=10^{-5}$))}
		\label{fig:BRT_BRT}  
	\end{subfigure} \\ \par \bigskip
	\begin{subfigure}{0.95\textwidth}
		\centering
		\includegraphics[width=0.95\textwidth]{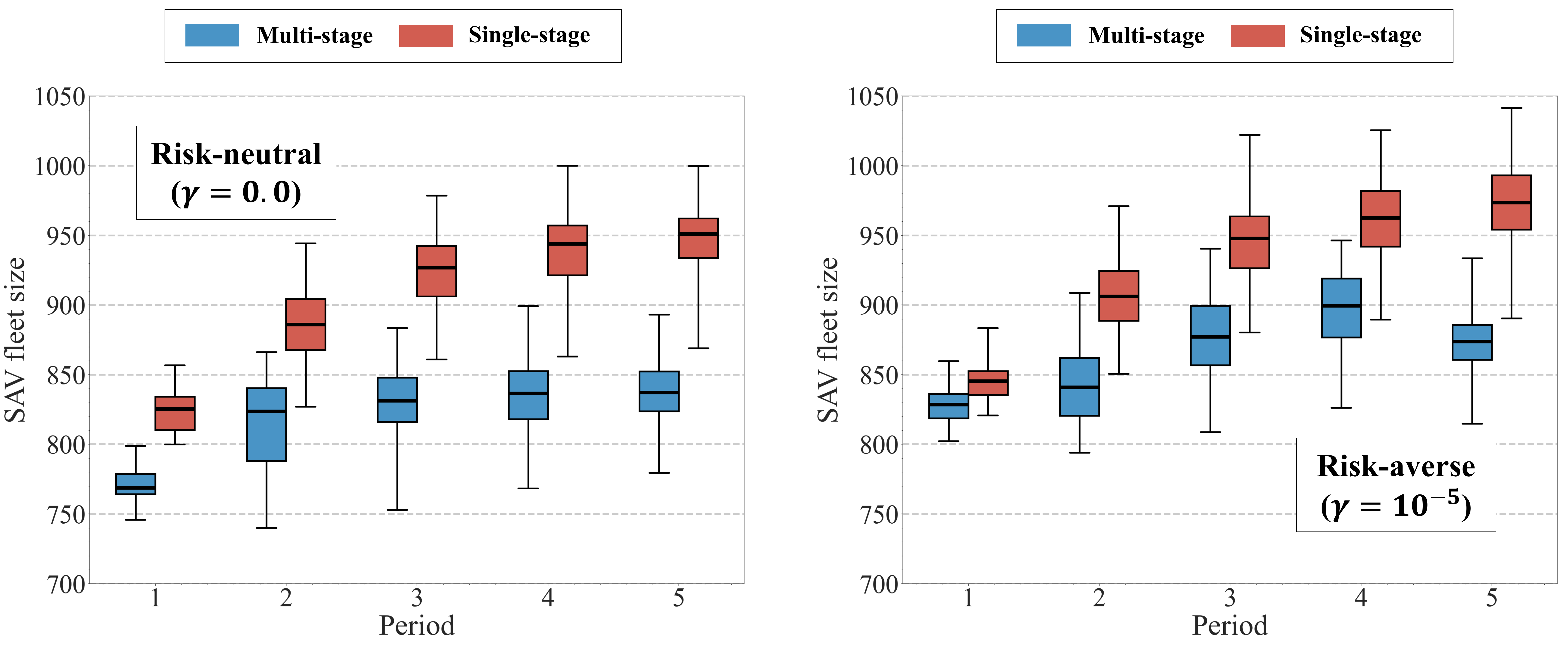}
		\caption{SAV fleet size (left: risk-neutral ($\gamma=0.0$), right: risk-averse ($\gamma=10^{-5}$))}
		\label{fig:SAV_BRT}  
	\end{subfigure}
	\caption{Fleet size at each period in the SAV-BRT system.}
	\label{fig:fleet_size}  
\end{figure}

Figure \ref{fig:OperationalCost} presents {a sensitivity analysis of} the probability distribution of operational costs {with respect to the risk-aversion parameter $\gamma$.
Figures \ref{fig:OperationalCostPeriod1}--\ref{fig:OperationalCostPeriod5} correspond to the stagewise operational costs, while Figure \ref{fig:OperationalCost0} corresponds to the total operational cost.
}
Within each panel, blue and red areas represent operational costs under the multi- and single-stage network designs, respectively.

Figures \ref{fig:OperationalCostPeriod1}--\ref{fig:OperationalCostPeriod5} show that operational costs tend to rise monotonically over the planning horizon, simply because demand grows over time.
Notably, under the single-stage network design, the tail of the distribution of operational costs becomes heavier, indicating the planner encounters excessive operational costs in severe scenarios.
We also observe that introducing risk aversion truncates the tail even in the single-stage setting, yet they remain significantly heavier than those under the multi-stage design.
This finding suggests that NDPs with reliability cannot fully mitigate the risk arising from uncertain demand growth; strategic flexibility in the form of the multi-stage planning framework is essential.

Finally, Figure \ref{fig:fleet_size} illustrates the operational decisions in the SAV-BRT system. 
Figures \ref{fig:BRT_BRT} and \ref{fig:SAV_BRT} show the BRT and SAV fleet sizes at each period, respectively.
{
The left and right panels illustrate respective risk-neutral ($\gamma = 0.0$) and risk-averse ($\gamma = 10^{-5}$) cases.
Within each panel, the blue areas and lines correspond to the multi-stage design, while the red areas and lines correspond to the single-stage design.
}

As shown in Figure \ref{fig:fleet_size}, the trends of the fleet size decisions under the single- and multi-stage designs are in contrast.
Under the single-stage design, the BRT network persists over time, limiting BRT's ability to accommodate operational uncertainties.
In contrast, the multi-stage strategic decisions allow the BRT network to be expanded in response to uncertain demand growth, enhancing its spatial flexibility over time.
As a result, the BRT fleet size increases in later periods, reducing dependence on SAVs.
{
In addition, comparing the left and right panels of Figure \ref{fig:fleet_size} reveals that the introduction of risk aversion shifts fleet sizing away from BRT and toward SAV.
Because BRT service is tied to fixed corridors and is less spatially adjustable than SAV operations,
a risk-averse planner tends to limit BRT deployment and instead relies more on the operational flexibility of SAVs to accommodate scenarios with high demand.
}
This observation highlights the interrelation between strategic and operational decisions from the perspective of 
{the complementary relations between flexibility and reliability
} in accommodating uncertainty, which argues for the necessity of the proposed framework for achieving both flexible and reliable network design planning.

%% file: ch5-3.tex
\subsubsection{System performance}
\label{sec:performance}

\begin{figure}[t]
	\centering
	\begin{subfigure}{1.0\textwidth}
		\centering
		\includegraphics[width=1.0\textwidth]{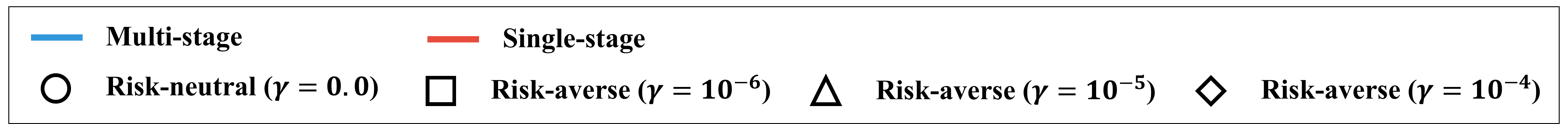}
	\end{subfigure} \\ \par \vspace{0.5ex}
	\begin{subfigure}{0.32\textwidth}
		\centering
		\includegraphics[width=1.0\textwidth]{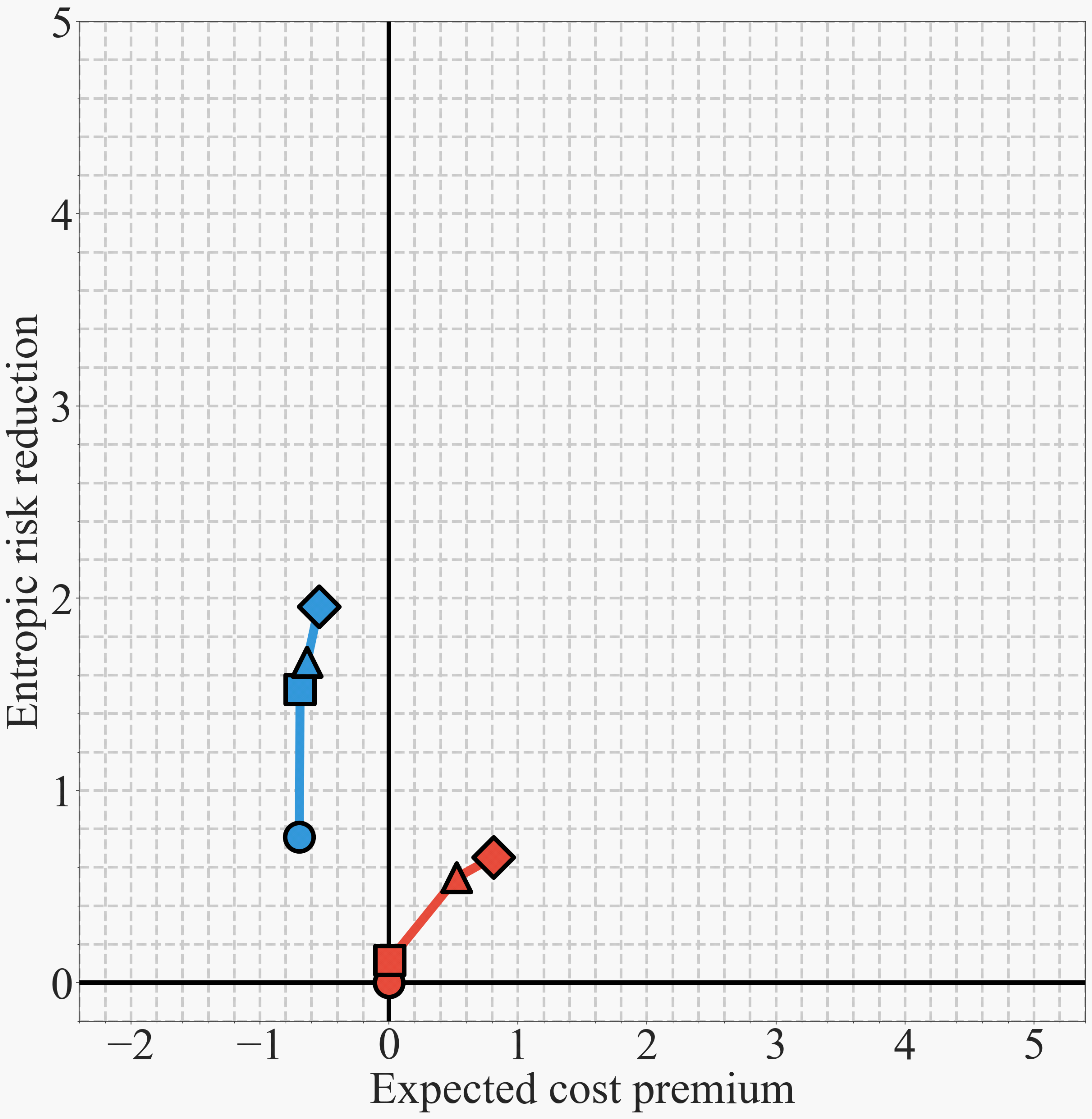}
		\caption{$(\theta,\sigma)=(0.2,0.2)$}
		\label{fig:Performance1}  
	\end{subfigure}
	\begin{subfigure}{0.32\textwidth}
		\centering
		\includegraphics[width=1.0\textwidth]{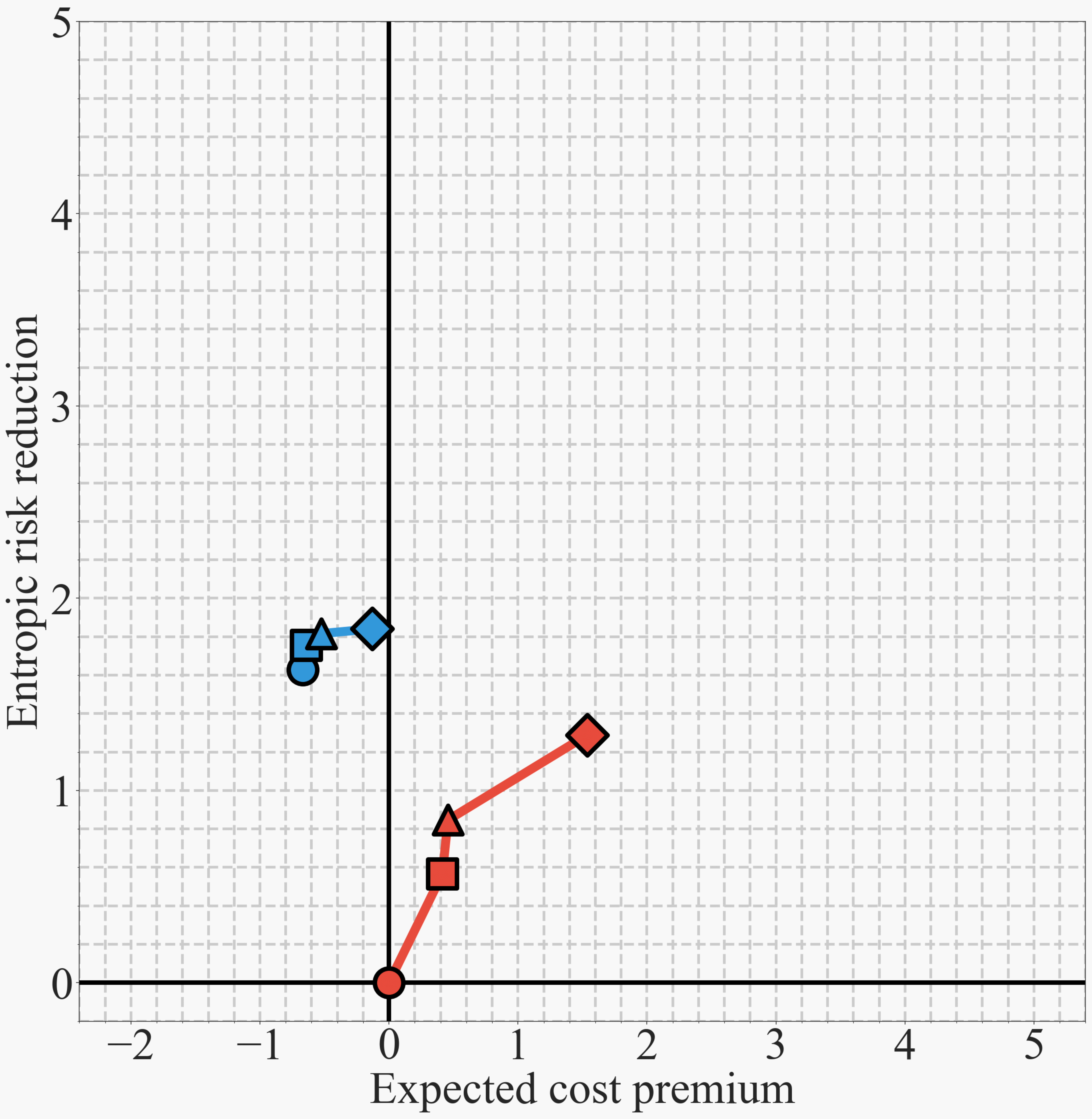}
		\caption{$(\theta,\sigma)=(0.2,0.5)$}
		\label{fig:Performance2}  
	\end{subfigure} 
	\begin{subfigure}{0.32\textwidth}
		\centering
		\includegraphics[width=1.0\textwidth]{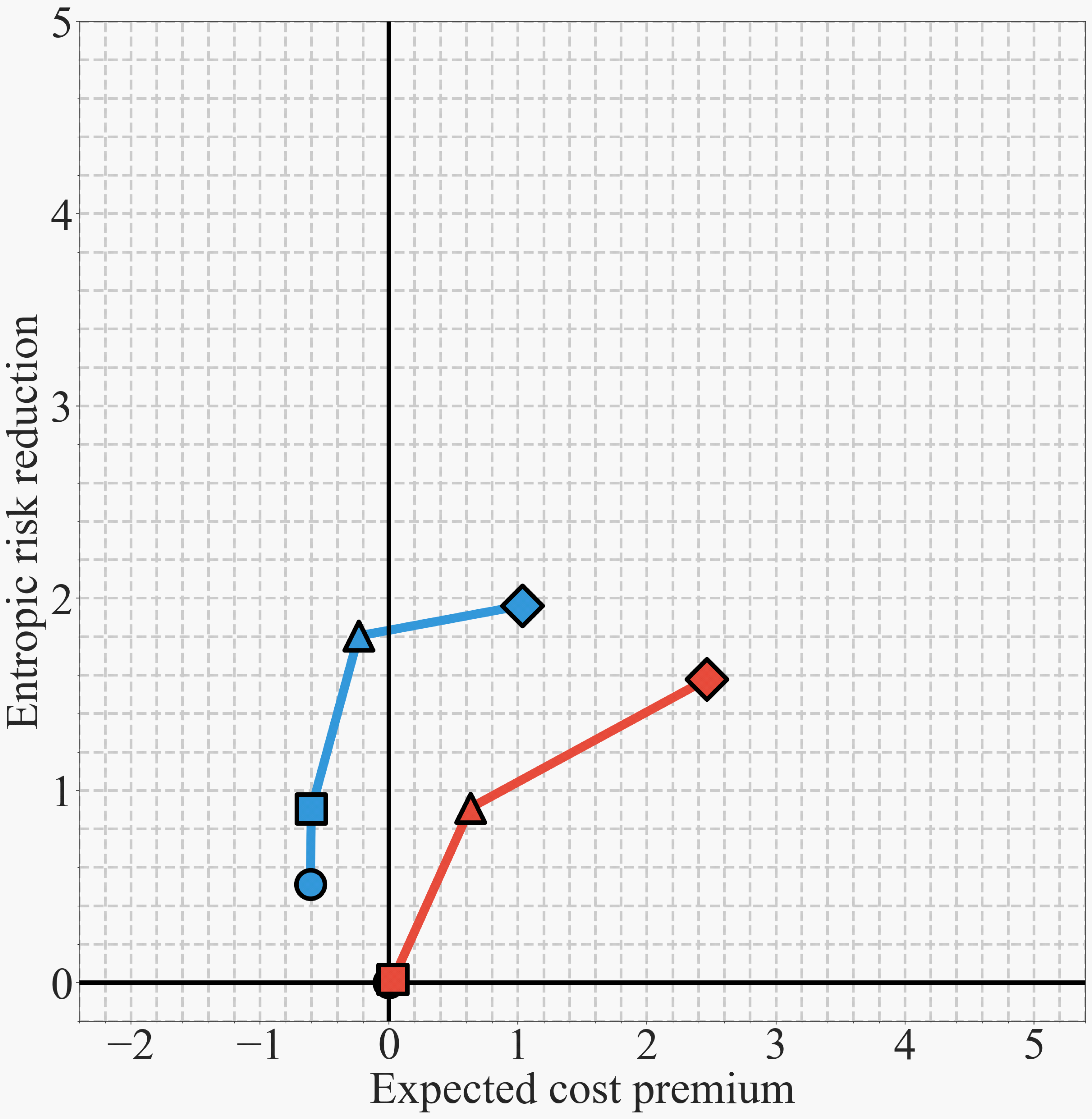}
		\caption{$(\theta,\sigma)=(0.2,1.0)$}
		\label{fig:Performance3}  
	\end{subfigure} \\ \par \bigskip
	\begin{subfigure}{0.32\textwidth}
		\centering
		\includegraphics[width=1.0\textwidth]{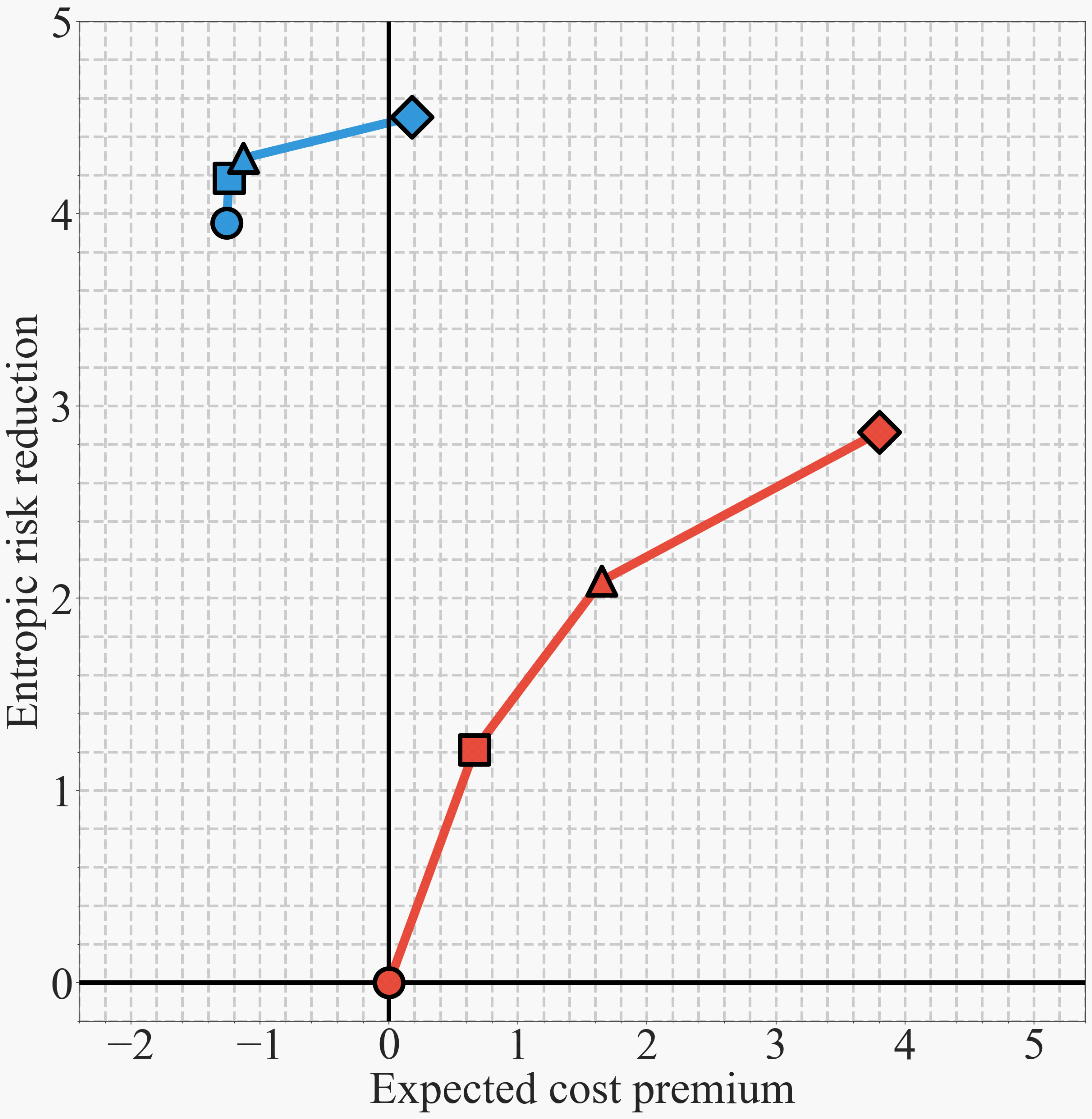}
		\caption{$(\theta,\sigma)=(0.5,0.5)$}
		\label{fig:Performance4}  
	\end{subfigure} 
	\begin{subfigure}{0.32\textwidth}
		\centering
		\includegraphics[width=1.0\textwidth]{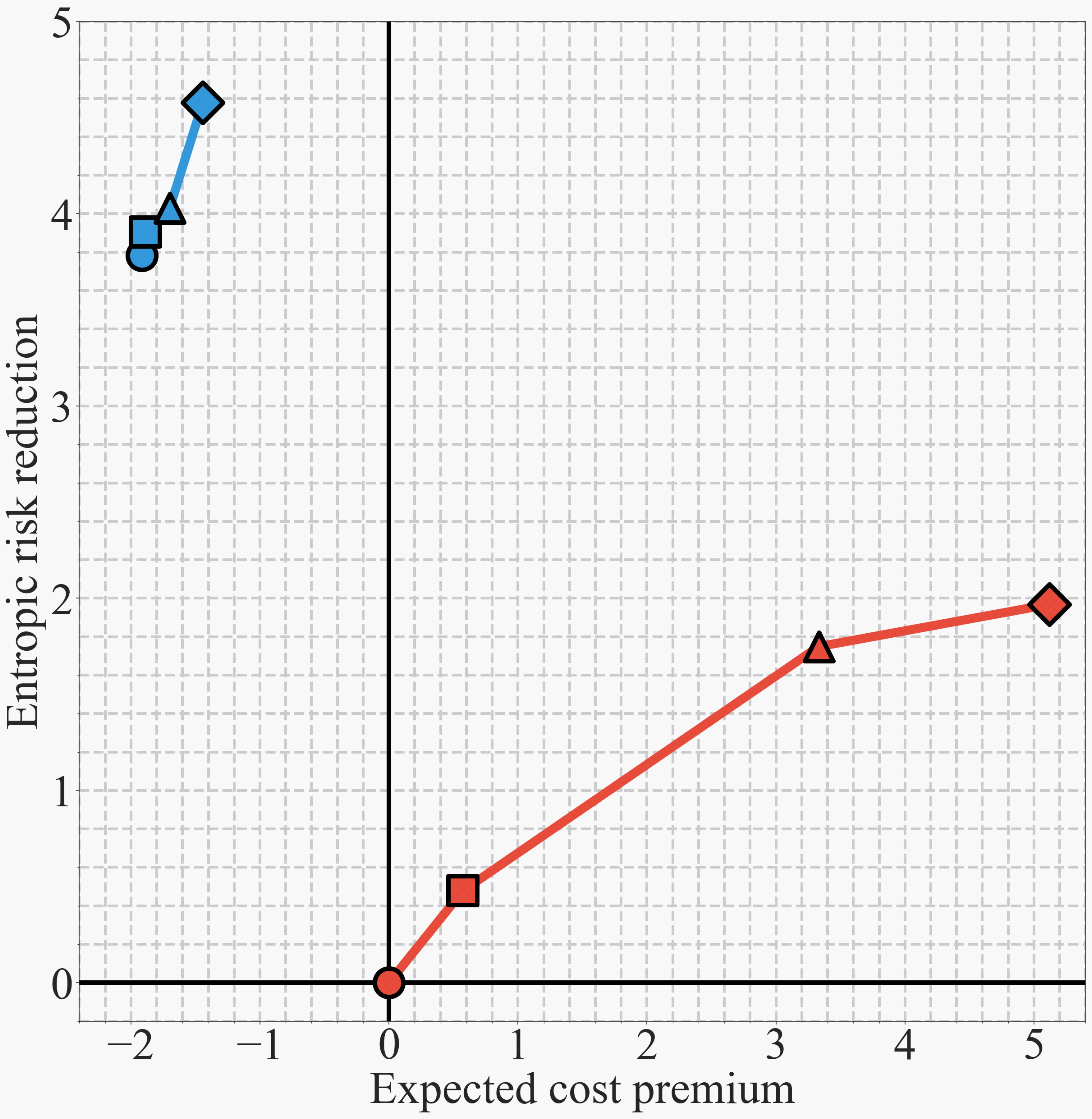}
		\caption{$(\theta,\sigma)=(0.7,0.2)$}
		\label{fig:Performance5}  
	\end{subfigure}
	\begin{subfigure}{0.32\textwidth}
		\centering
		\includegraphics[width=1.0\textwidth]{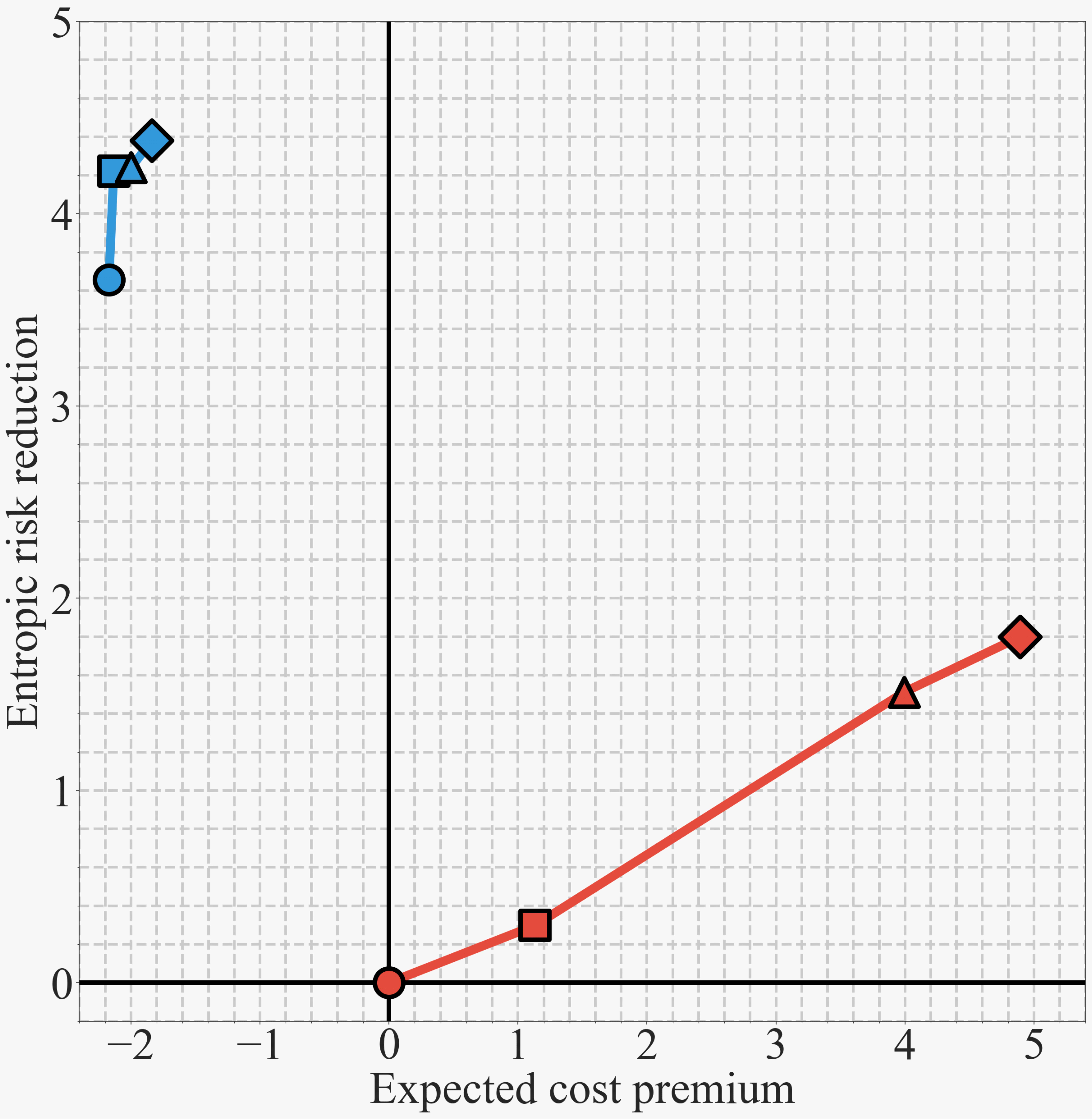}
		\caption{$(\theta,\sigma)=(0.8,0.0)$}
		\label{fig:Performance6}  
	\end{subfigure} 
	\caption{Expected cost premium versus  Entropic risk reduction in the SAV system.}
	\label{fig:Performance}  
\end{figure}

{
This subsection reports the impact of introducing flexibility and reliability into transportation network design on system performance.
Our validation employs the following two metrics to evaluate averaged capability and risk hedging capability:
\begin{flalign*}
	&{\rm (Expected~cost~premium)}:=\frac{{\rm M}[\Psi^{{\rm model}_\gamma}]-{\rm M}[\Psi^{{\rm single}_0}] }{ {\rm M}[\Psi^{{\rm single}_0}] },&\\
	&{\rm (Entropic~risk~reduction)}:=\frac{{\rm Ent}[\Psi^{{\rm single}_0}]-{\rm Ent}[\Psi^{{\rm model}_\gamma}] }{{\rm Ent}[\Psi^{{\rm single}_0}]},&
\end{flalign*}
where ${\rm model}\in\{{\rm single},{\rm multi}\}$.
$\Psi^{{\rm model}_\gamma}$ denotes the random total cost under the optimal design obtained from the corresponding
${\rm model}$ formulation with risk-aversion parameter $\gamma$, and $\Psi_m^{{\rm model}_\gamma}$ is its realization in the
$m$-th Monte Carlo simulation ($m=1,\ldots,M$).
We compute the sample mean and the entropic-risk metric as
${\rm M}[\Psi^{{\rm model}_\gamma}]:=\frac{1}{M}\sum_{m=1}^M \Psi_m^{{\rm model}_\gamma}$ and
${\rm Ent}[\Psi^{{\rm model}_\gamma}]
:=\frac{1}{\bar{\gamma}}\log\!\left(\frac{1}{M}\sum_{m=1}^M\exp\!\left(\bar{\gamma}\,\Psi_m^{{\rm model}_\gamma}\right)\right),
{\rm where~} \bar{\gamma}=10^{-4}$.
Expected cost premium measures the relative change in the averaged total cost with respect to the risk-neutral and single-stage
baseline; negative values indicate a reduction in expected cost.
Entropic risk reduction measures the relative reduction in right-tail costs; larger values indicate better hedging against severe cases.

Figure \ref{fig:Performance} depicts Entropic risk reduction versus Expected cost premium for different uncertainty levels $(\theta,\sigma)$.
The vertical and horizontal axes represent Entropic risk reduction and Expected cost premium, respectively.
Blue and red curves correspond to the multi-stage and single-stage network designs.
Markers indicate the risk-aversion parameter: circles ($\gamma=0$) represent a risk-neutral case, while squares
($\gamma=10^{-6}$), triangles ($\gamma=10^{-5}$), and diamonds ($\gamma=10^{-4}$) represent increasingly risk-averse cases.
By definition, the risk-neutral and single-stage baseline is located at the origin $(0,0)$.
The points above the horizontal axis achieve risk reduction, and the points left of the vertical axis achieve lower expected cost than the baseline.

From Figure \ref{fig:Performance}, we can observe that both the Entropic risk reduction and Expected cost premium increase as the risk-aversion parameter $\gamma$ increases.
This captures a typical characteristic of risk-aversion models, which is to mitigate the right-tail costs associated with severe scenarios at the expense of average performance.
Moreover, 
} 
across all combinations of uncertainty scales $(\theta,\sigma)$ and risk-aversion parameter $\gamma$,
{
the multi-stage designs lie above and to the left of the corresponding single-stage designs, indicating that}
multi-stage design simultaneously reduces average cost and right-tail cost compared to single-stage design.
{ This advantage becomes more pronounced under high strategic uncertainty $\theta$, as shown in Figures \ref{fig:Performance4}--\ref{fig:Performance6}.
Figures \ref{fig:Performance1}, \ref{fig:Performance4}, and \ref{fig:Performance6} further show that under conditions of low operational uncertainty $\sigma$,
multi-stage design achieves Entropic risk reduction with a relatively modest increase in Expected cost premium compared to single-stage design.
}
These results confirm that incorporating both flexibility (through multi-stage planning) and reliability (through risk-aversion) enables complementary benefits, hedging against extreme outcomes without unduly sacrificing average performance.

%% file: ch6.tex
\section{Conclusions}

This study proposed a general framework for flexible and reliable network design problems (FR-NDPs) tailored to emerging transportation services. By jointly incorporating flexibility---the ability to adjust, postpone, or abandon infrastructure investments---and reliability---the ability to maintain a desired performance level under uncertainty---the framework enables planners to hedge against severe scenarios while avoiding unnecessary overinvestment. For emerging transportation services such as shared autonomous vehicle (SAV) systems or integrated multimodal operations, where historical data are scarce and technology trajectories uncertain, these properties are particularly valuable. They allow infrastructure deployment to be synchronized with realized demand growth and technological developments, ultimately improving both service level and cost effectiveness.

The FR-NDPs are formulated as risk-averse multi-stage stochastic programs (MSSPs) and solved using stochastic dual dynamic programming (SDDP). This approach decomposes the problem into tractable stagewise subproblems, overcomes the curse-of-horizon, and guarantees global optimality under mild convexity conditions, while preserving time-consistent risk preferences.
Beyond simply leveraging SDDP’s ability, this study systematically established sufficient conditions for convergence of the lower bound, for computation of statistical upper bounds, and for applying Dual SDDP to obtain deterministic upper bounds.
By explicitly linking these properties to transportation system models, the proposed framework provides a unified and extensible modeling structure that allows for the incorporation of diverse strategic and operational models, while also achieving both computability and provable solution quality.

The proposed framework was applied to two distinct cases: (i) capacity expansion planning for SAV systems, and (ii) sequential route design for integrated SAV-BRT (Bus Rapid Transit) systems. In both cases, existing strategic decision models are incorporated directly into the FR-NDP formulation with minimal modification. In the SAV application, the mathematical tractability of the MSSP formulation enabled the derivation of an economic property---an infrastructure cost recovery principle---linking optimal congestion tolls to infrastructure expansion and maintenance costs. These applications illustrate the framework’s ability to accommodate varied system structures and decision processes while delivering analytical insights.

Our numerical experiments on the Midtown Manhattan network examined how incorporating flexibility and reliability affects both strategic and operational decisions, as well as overall system performance.
In the SAV application, multi-stage planning produced not only different temporal investment sequences but also distinct spatial allocation patterns compared with single-stage designs.
Moreover, the impact of risk aversion differed by design setting: in single-stage NDPs, risk-averse preferences led to higher infrastructure investments to hedge against operational risks, whereas in multi-stage NDPs, early investments were restrained to preserve the ability to adapt later, reflecting a concern over locking in excessive capacity too soon.
In the SAV-BRT application, the lower operational flexibility of BRT services was reflected in the strategic decisions, with risk-averse planning shortening BRT routes to limit exposure to uncertain demand. 
These findings highlight the importance of jointly considering flexibility, reliability, and the interaction between operational and strategic decisions. 
From a performance perspective, multi-stage designs consistently outperformed their single-stage counterparts, and the advantage became more pronounced as demand uncertainty increased. 
Flexibility and reliability also exhibited a complementary effect, enabling strategies that hedge against severe scenarios while mitigating the loss of expected performance.
In contrast, single-stage designs struggled to hedge against risk without incurring substantial reductions in expected performance.
These results highlight the value of jointly incorporating flexibility and reliability to achieve both robust and high-performing long-term network planning.

{
A significant limitation concerns computational scalability.
While the convergence guarantees are independent of problem scale,
they do not bound the number of iterations or the computation time required for convergence.
Addressing this limitation is an important direction for future work.}
{
One promising approach is Neural SDDP \citep{dai2021neural}, which could handle high-dimensional action spaces while maintaining convergence guarantees based on optimality cuts.
}

An important next step {in network design modeling} is to extend the framework to capture endogenous demand responses to sequential investment and operational decisions.
In emerging transportation systems, infrastructure provision can itself influence travel demand, mode choice, and temporal patterns of usage, creating feedback loops that may amplify or dampen the benefits of flexibility and reliability.
Embedding such demand-side dynamics in the risk-averse multi-stage network design would improve its ability to inform policy decisions that balance user behavior, infrastructure efficiency, and long-term resilience.

%% file: chx.tex
{
\section{Previous studies on Stochastic Dual Dynamic Programming}
\label{app:sddp0}

In this Appendix,
we review the previous studies on Stochastic Dual Dynamic Programming (SDDP),
which forms the foundation of the proof of Propositions \ref{prop:suff}--\ref{prop:suff3}.
}

\subsection{Description of the standard SDDP algorithm}
\label{app:sddp}

SDDP has gained enormous interest from a theoretical and an application perspective, starting with the original work \citep{pereira1991multi}.
{\
This section summarizes the standard algorithm and convergence properties of SDDP to solve multi-stage stochastic problems (MSSPs) of the form
\begin{subequations}
	\label{eq:originalMSSP}
	\begin{flalign}
		&\min_{\bm{x}_1,\bm{u}_1}C_1(\bm{x}_1,\bm{u}_1,\bm{\xi}_1)+\mathbb{F}_{\bm{\xi}_1}\left[\min_{\bm{x}_2, \bm{u}_2}C_2(\bm{x}_2,\bm{u}_2,\bm{\xi}_2)+\mathbb{F}_{\bm{\xi}_{[2]}}\left[\cdots+\mathbb{F}_{\bm{\xi}_{[S-1]}}\left[\min_{\bm{x}_S,\bm{u}_S}C_S(\bm{x}_S,\bm{u}_S,\bm{\xi}_S)\right]\right]\right],&
	\end{flalign}
	\vspace{-2ex}
	\begin{flalign}
		&{\rm~s.t.~~}\bm{x}_s=T_s(\bm{x}_{s-1}, \bm{u}_s, \bm{\xi}_s)&\forall s\in\mathcal{S}:=\{1,...,S\},\\
		&~~~~~~~~\bm{u}_s\in U_s(\bm{x}_s, \bm{\xi}_s)&\forall s\in\mathcal{S}:=\{1,...,S\},
	\end{flalign}
\end{subequations}
where $\bm{x}_s$ is a state variable vector, $\bm{u}_s$ is a control variable vector, $\bm{\xi}_s$ a random variable vector, 
and $\bm{x}_0$ and $\bm{\xi}_1$ are given.
$\mathbb{F}_{\bm{\xi}_{[t]}}$ is a conditional risk measure on history of the random process $\bm{\xi}_{[t]}=\{\bm{\xi}_1,...,\bm{\xi}_t\}$.
$C_s$, $T_s$, and $U_s$ represent cost functions, transition functions, and action spaces, respectively.
} The standard assumptions commonly used are as follows:
\begin{ass} (Linearity)
	\label{app:assumption_linearity}
	Given fixed $\bm{\xi}_s$, the cost function $C_s$ is linear, the transition function $T_s$ is linear, and the action space $U_s$ is a non-empty compact polyhedral set for all stages $s\in\mathcal{S}$,
\end{ass}
\begin{ass} (Risk-neutral policy)
	\label{app:assumption_risk_neutral}
	$\mathbb{F}^{}_{\bm{\xi}_{s}}[\cdot]$ is $\mathbb{E}_{\bm{\xi}_s}[\cdot]$ for all stages $s\in\mathcal{S}$, 
\end{ass}
\begin{ass} (Continuous variables)
	\label{app:assumption_continuous}
	All the variables are continuous for all stages $s\in\mathcal{S}$, 
\end{ass}
\begin{ass} (Finite planning horizon)
	\label{app:assumption_finite_horizon}
 	The length of the planning horizon $S$ is finite and given,
\end{ass}
\begin{ass} (Finite randomness)
	\label{app:assumption_finite_randomness}
	The number of realizations of $\bm{\xi}_{s}$, $N_s$, is finite for all stages $s\in\mathcal{S}$,
\end{ass}
\begin{ass} (Stagewise independence)
	\label{app:assumption_independence}
	Realizations of $\bm{\xi}_{s}$ are independent of other stages for all stages $s\in\mathcal{S}$, and
\end{ass}
\begin{ass} (RCR: Relatively complete recourse)
	\label{app:assumption_RCR}
	For all feasible $\bm{x}_{s-1}$ and almost all realizations of $\bm{\xi}_{s}$, $\mathcal{X}_{s}(\bm{x}_{s-1}, \bm{\xi}_s):=\{(\bm{x}_s,\bm{u}_s)|\bm{x}_s=T_s(\bm{x}_{s-1}, \bm{u}_s, \bm{\xi}_s),~\bm{u}_s\in U_s(\bm{x}_s, \bm{\xi}_s)\}$ is non-empty for all stages $s\in\mathcal{S}$.
\end{ass}

{
For clarity of exposition,
we consider the following explicit formulation of MSSPs satisfying Assumptions \ref{app:assumption_linearity}--\ref{app:assumption_RCR}:}
\begin{subequations}
	\label{eq:MSSP}
	\begin{flalign}
		&\min_{\bm{x}_1\ge\bm{0}}\bm{c}_1^{\top}\bm{x}_1+\mathbb{E}_{\bm{\xi}_1}\left[\min_{\bm{x}_2\ge\bm{0}}\bm{c}_2^{\top}\bm{x}_2+\mathbb{E}_{\bm{\xi}_2}\left[\cdots+\mathbb{E}_{\bm{\xi}_{S-1}}\left[\min_{\bm{x}_S\ge\bm{0}}\bm{c}_S^{\top}\bm{x}_S\right]\right]\right],&\label{eq:MSSP_objective}\\
		&\bm{A}_s\bm{x}_s+\bm{B}_s\bm{x}_{s-1}=\bm{b}_s,~{\rm where~}\bm{x}_0 {\rm~and~} \bm{\xi}_1{\rm~are~given,}~~~~~~~~~~~~~~~~~~~~~~~~~~~~~~~~~~~~~~~~~~~~~~~~~~~~~~~~~~~~~~~~~~~~~~~~~&\forall s, \label{eq:MSSP_transition}
	\end{flalign}
\end{subequations}
where the vectors $\bm{b}_s,\bm{c}_s$ and the matrices $\bm{A}_s,\bm{B}_s$ are random variables forming the stochastic process $\bm{\xi}_s=(\bm{b}_s,\bm{c}_s,\bm{A}_s,\bm{B}_s)$.
$\mathbb{E}_{\bm{\xi}_{s-1}}[\cdot]$ denotes the conditional expectation on $\bm{\xi}_{s}$.
Eqs. (\ref{eq:MSSP}) can be decomposed into the following subproblems based on Bellman's principle:
\begin{subequations}
	\label{eq:MSSP_DP}
	\begin{flalign}
		&V_{s}(\bm{x}_{s-1},{\bm{\xi}}_s)=\min_{\bm{x}_s\in\bm{\mathcal{X}}_s(\bm{x}_{s-1},{\bm{\xi}}_s)}\left[\bm{c}_s^{\top}\bm{x}_s+\mathcal{V}_{s+1}(\bm{x}_s)\right]&\forall s,\label{eq:MSSP_DP_a}\\
		&\mathcal{V}_{s+1}(\bm{x}_{s})=\mathbb{E}_{{\bm{\xi}}_{s}}\left[V_{s+1}(\bm{x}_{s},{\bm{\xi}}_{s+1})\right] {\rm~where~}\mathcal{V}_{S+1}(\cdot)=0&\forall s,\label{eq:MSSP_DP_b}
	\end{flalign}
\end{subequations}
where $V_{s}(\bm{x}_{s-1},\bm{{\xi}}_{s})$ is the value function at stage $s$ and $\mathcal{V}_{s}(\bm{x}_{s-1})$ is the expected value function.
A dynamic programming equation (\ref{eq:MSSP_DP}) is solved when the value function$V_{s}(\bm{x}_{s-1},\bm{{\xi}}_{s})$ is identified.
Under Assumption \ref{app:assumption_RCR}, ${V}_{s}(\bm{x}_{s-1})$ is well defined and finite valued.
Furthermore, under Assumption \ref{app:assumption_finite_randomness},
the computation of Eq. (\ref{eq:MSSP_DP_b}) is achieved by the weighted summation based on probability.
Denoting the set of realizations of $\bm{\xi}_{s}$ as $\{\bm{\xi}_s^1,.... \bm{\xi}_s^{N_s}\}$ and the corresponding probability as $\{{p}_s^1,.... {p}_s^{N_s}\}$, Eq. (\ref{eq:MSSP_DP}) can be rewritten as follows:
\begin{subequations}
	\label{eq:MSSP_DP_SSA}
	\begin{flalign}
		&{V}_{s}(\bm{x}_{s-1},\bm{\xi}^n_{s}) =\min_{\bm{x}_s\in\bm{\mathcal{X}}_s(\bm{x}_{s-1},{\bm{\xi}}^n_s)}\left[(\bm{c}_s^{n})^{\top}\bm{x}_s+\mathscr{V}_{s+1}(\bm{x}_s)\right]&\label{eq:MSSP_DP_SSA_a}\forall s,n=1,...,N_s,\\
		&\mathscr{V}_{s}(\bm{x}_{s-1})=\sum_{n=1}^{N_s}{p}_s^n{V}_{s}(\bm{x}_{s-1},\bm{\xi}_{s}^n),{\rm~where~}\mathscr{V}_{S+1}(\bm{x}^{}_{S})=0&\forall s. \label{eq:MSSP_DP_SSA_b}
	\end{flalign}
\end{subequations}

SDDP forms a lower approximation of the value function using a set of piecewise linear functions, referred to as cuts.
The SDDP algorithm consists of a forward step to evaluate policies and a backward step to improve the policy.
The backward step updates information about the expected value function by going back from the final step to the initial step and solving a subproblem at each stage.
This information is obtained as a piecewise linear approximation of the expected value function in the form of a Benders cut.
Under loose conditions (e.g., convexity), the piecewise linear function defined by a set of Benders cuts provides an outer approximation (i.e., lower bound) of the expected value function.
The forward step generates realizations (sample paths) of the stochastic process and computes the optimal solution corresponding to the realizations from the initial to the final step.
The {set of Benders cuts} obtained in the backward step is used as an approximation of the expected value function at each stage.
The calculation procedure is shown in Algorithm \ref{alg:overview}.

\begin{figure}[!t]
	\begin{algorithm}[H]
		\caption{An overview of the SDDP algorithm}
		\label{alg:overview}
		\begin{algorithmic}[]
			\setlength{\itemsep}{0pt} \setlength{\parskip}{0pt}
			\REQUIRE $\varepsilon > 0~({\rm convergence~condition}),~\{{\bm{\xi}}^1_s,...,{\bm{\xi}}^N_s\}_{s=2,...,S}$
			\STATE Initialize: iteration$\leftarrow0$, $\underline{\Psi}=-\infty~({\rm lower~bound}),$ $ \overline{\Psi}=\infty~({\rm upper~bound}),$  and $\{{\bm{\mathcal{K}}}_s=\emptyset\}_{s=1,...,S-1}~({\rm cut~index~sets})$
			\WHILE{$\overline{\Psi} - \underline{\Psi}>\varepsilon$}
			\STATE Perform the forward step: Update the upper bound $\overline{\Psi}$
			\STATE Perform the backward step: Update the lower bound $\underline{\Psi}$
			\STATE Update the iteration number: iteration$\leftarrow$iteration$+1$
			\ENDWHILE
		\end{algorithmic}
	\end{algorithm}
\end{figure}

\subsubsection{Backward Step}
\label{backward}

The backward step solves the subproblem backward using the $M$ trial solutions $\{\overline{\bm{x}}_s^m\}_{s=1,...,S-1,m=1,...,M}$ computed in the forward step.
In the following, we outline the computational procedure for the $m$-th trial solution to avoid notational complexity.

Let us consider a subproblem at the final stage $s=S$.
Given a trial solution $\overline{\bm{x}}_{S-1}$, Eq. (\ref{eq:MSSP_DP_SSA}) can be rewritten as follows:
\begin{subequations}
	\label{eq:backwardT}
	\begin{flalign}
		&{V}_{S}(\overline{\bm{x}}_{S-1},\bm{\xi}^n_{S}) = \min_{\bm{x}_S\ge0, \bm{A}^n_S\bm{x}_S=-\bm{B}^n_S\overline{\bm{x}}_{S-1}+\bm{b}^n_S}(\bm{c}_S^{n})^{\top}\bm{x}_S &\forall n=1,...,N_S,\label{eq:HJB_backwardT}\\
		&\mathscr{V}_{S}(\overline{\bm{x}}_{S-1})=\sum_{n=1}^{N_S}p_S^n{V}_{S}(\overline{\bm{x}}_{S-1},\bm{\xi}_{S}^n).\label{eq:ExpQ_backwardT}&
	\end{flalign}
\end{subequations}
The outer approximation of the expected value function is formed by a set of the Benders cuts.
Under Assumption \ref{app:assumption_linearity} and \ref{app:assumption_continuous}, since $\mathscr{V}_{S}(\overline{\bm{x}}_{S-1})$ is a piecewise linear function, the Benders cut satisfies the following inequality:
\begin{flalign}
	\label{eq:Benders_inequality_multi}
	&{V}_{S}({\bm{x}}_{S-1},\bm{\xi}^n_{S})\ge{V}_{S}(\overline{\bm{x}}_{S-1},\bm{\xi}^n_{S})+(\bm{g}_S^n)^{\top}(\bm{x}_{S-1}-\overline{\bm{x}}_{S-1})&\forall n=1,...,N_S,\\
	\label{eq:Benders_inequality_single}
	&\mathscr{V}_{S}({\bm{x}}_{S-1})\ge\mathscr{V}_{S}(\overline{\bm{x}}_{S-1})+\bm{g}_S^{\top}(\bm{x}_{S-1}-\overline{\bm{x}}_{S-1}).&
\end{flalign}
where $\bm{g}_S$ and $\bm{g}_S^n$ are subgradients of $\mathscr{V}_{S}({\bm{x}}_{S-1})$ and ${V}_{S}({\bm{x}}_{S-1},\bm{\xi}^n_{S})$ at $\overline{\bm{x}}_{S-1}$, respectively,
using the dual variable vector $\bm{\pi}_S^n$ corresponding to the transition equation (\ref{eq:MSSP_transition}), given by
\begin{flalign}
	&\bm{g}_S^n=-(\bm{B}_S^{n})^{\top}\bm{\pi}_S^n&\forall n=1,...,N_S,\\
	&\bm{g}_S=\sum_{n=1}^{N_S}p_s^n\bm{g}_S^n.&
\end{flalign}
From Eqs. (\ref{eq:Benders_inequality_multi}) and (\ref{eq:Benders_inequality_single}), the Benders cut $\ell({\bm{x}}_{S-1})$ is given by
\begin{flalign}
	&\ell({\bm{x}}_{S-1}):=\bm{g}_S^{\top}\bm{x}_{S-1}+\beta_{S},&\\
	&\beta_{S}=\mathscr{V}_{S}(\overline{\bm{x}}_{S-1})-\bm{g}_S^{\top}\overline{\bm{x}}_{S-1}.&
\end{flalign}
Under Assumption \ref{app:assumption_independence}, this Benders cut can be shared by all problems in $(S-1)$ stage.

Then, consider a subproblem at the $s=S-1$ stage.
The index set of Benders cuts formed by the above procedure is denoted by $\bm{\mathcal{K}}_{S-1}$.
The outer approximation of the expected value function can then be defined as
\begin{flalign}
	&\mathfrak {V}_S:=\max_{k\in\bm{\mathcal{K}}^{\rm }_{S-1}}\{\bm{g}_{S,k}^{\top}{\bm{x}}_{S-1}+\beta_{S,k}\}.&
\end{flalign}
Given a trial solution $\overline{\bm{x}}_{S-2}$, a subproblem at the $s=S-1$ stage can be rewritten as follows:
\begin{subequations}
	\begin{flalign}
		\label{eq:HJB_backwardT-1}
		&\min_{\bm{x}_{S-1}\ge0,\mathfrak{V}_{S}\in\mathbb{R}, \bm{A}^n_{S-1}\bm{x}_{S-1}=-\bm{B}^n_{S-1}\overline{\bm{x}}_{S-2}+\bm{b}^n_{S-1}}(\bm{c}_{S-1}^{n })^{\top}\bm{x}_{S-1} + \mathfrak{V}_{S}&\forall n=1,...,N_{S-1},\\
		\label{eq:BendersCutting_T-1}
		&\mathfrak{V}_{S}\ge(\bm{x}_{S-1})\bm{g}_{S,k}^{\top}+\beta_{S,k}&\forall k\in\bm{\mathcal{K}}_{S-1}.
	\end{flalign}
\end{subequations}
The Benders cut can be computed using the same procedure as in $s=S$ subproblem.
Applying the same computational procedure to $s=S-2,...,1$, we obtain an outer approximation of the expected value function at each stage. 

The backward step finally solves the subproblem at the initial stage $s=1$:
\begin{subequations}
	\label{eq:single_backward0}
	\begin{flalign}
		\label{eq:single_HJB_backward0}
		&\underline{\Psi}=\min_{\bm{x}_{1}\ge0, \mathfrak{V}_{2}\in\mathbb{R}, \bm{A}_{1}\bm{x}_{1}=-\bm{B}_{1}\bm{x}_{0}+\bm{b}_{1}}\bm{c}_{1}^{\top}\bm{x}_{1} + \mathfrak{V}_{2},&\\
		\label{eq:single_BendersCutting0}
		&\mathfrak{V}_{2}\ge\bm{x}_{1}\bm{g}_{2,k}^{\top}+\beta_{2,k}&\forall k\in\bm{\mathcal{K}}_1.
	\end{flalign}
\end{subequations}
Eq. (\ref{eq:single_backward0}) provides a lower bound of the optimal value.
Since each iteration of the backward step adds a new Benders cut to Eq. (\ref{eq:single_BendersCutting0}), we can guarantee that the optimal value of Eq. (\ref{eq:single_backward0}) is non-decreasing.
The pseudo-code for the backward step is shown in Algorithm \ref{alg:single_Backward}.

\begin{figure}[!t]
	\begin{algorithm}[H]
		\caption{Backward pass of the SDDP algorithm for $M$ trial solutions}
		\label{alg:single_Backward}
		\begin{algorithmic}[]
			\setlength{\itemsep}{0pt} \setlength{\parskip}{0pt}
			\REQUIRE $\{\overline{\bm{x}}_s^m\}_{s=0,...,S-1, m=1,...,M}$
			\FOR{$s=S\rightarrow2$}
			\FOR{$m=1\rightarrow M$}
			\FOR{$n=1\rightarrow N_s$}
			\STATE Solve $s$-th stage problem $(\mathfrak{V}_{S+1}=0, \underline{V}_S(\overline{\bm{x}}_{S-1}^m,\bm{\xi}^n_S)={V}_S(\overline{\bm{x}}_{S-1}^m,\bm{\xi}^n_S),~ \underline{\mathscr{V}}_S(\overline{\bm{x}}_{S-1}^m)=\mathscr{V}_S(\overline{\bm{x}}_{S-1}^m)):$
			\STATE $\left(\underline{V}_s(\overline{\bm{x}}_{s-1}^m,\bm{\xi}^n_s), \bm{\pi}_s^{n,m}\right)\leftarrow$
			\STATE $\min_{\bm{x}_s\ge0, \mathfrak{V}_{s+1}\in\mathbb{R}}\left\{\bm{c}_s^{n\top}\bm{x}_s+\mathfrak{V}_{s+1}|\bm{A}^n_s\bm{x}_s=-\bm{B}^n_t\overline{\bm{x}}_{s-1}^m+\bm{b}_s^n,~\mathfrak{V}_{s+1}\ge (\bm{g}_{s+1,k}^m)^{\top}\bm{x}_t+\beta^m_{s+1,k},~\forall k\in\bm{\mathcal{K}}_s^{\rm } \right\}$
			\STATE Compute $\bm{g}_s^{n,m}$: $\bm{g}_s^{n,m}=-(\bm{B}^{n}_s)^{\top}\bm{\pi}_s^{n,m}$
			\ENDFOR
			\STATE Compute $\bm{g}_s^m,~\mathscr{V}_s(\overline{\bm{x}}_{s-1}^m),~{\rm and}~\beta_s^m:$
			\STATE $\bm{g}_s^m=\sum_{n=1}^{N_s}p_s^n\bm{g}_s^{n,m},$ 
			\STATE $\mathscr{V}_s(\overline{\bm{x}}_{s-1}^m)=\sum_{n=1}^{N_s}p_s^n\underline{V}_s(\overline{\bm{x}}_{s-1}^m,\bm{\xi}^n_s),$
			\STATE $\beta_s^m=\mathscr{V}_s(\overline{\bm{x}}_{s-1}^m)-(\bm{g}_s^m)^{\top}\overline{\bm{x}}_{s-1}^{m}.$
			\STATE Add the new cut $\mathfrak{V}_s\ge (\bm{g}_s^m)^{\top}\bm{x}_{s-1}+\beta^m_{s}$ to all $(s-1)$-th stage problems
			\ENDFOR
			\ENDFOR
			\STATE Update the lower bound: Solve 1st stage problem
			\STATE $\underline{\Psi}\leftarrow\min_{\bm{x}_1\ge0, \mathfrak{V}_{2}\in\mathbb{R}}\left\{\bm{c}_1^{\top}\bm{x}_1+\mathfrak{V}_{2}|\bm{A}_1\bm{x}_1=-\bm{B}_{1}\bm{x}_{0}+\bm{b}_1,~\mathfrak{V}_{2}\ge \bm{x}^{\top}_1\bm{g}_{2,k}+\beta_{2,k},~\forall k\in\bm{\mathcal{K}}^{\rm }_1 \right\}$
		\end{algorithmic}
	\end{algorithm}
\end{figure}

\subsubsection{Forward Step}
\label{app:forward}
The forward step provides a trial solution to the backward step and evaluates the upper bound of the optimal value.
The forward step solves the subproblem forward from the initial step after sampling $M$ realizations of the stochastic process, $\{\bm{\xi}^m_2,...\bm{\xi}^m_S\}_{m=1}^M$.
Using the {set of Benders cuts} computed in the backward step, the subproblem at stage $s$ is rewritten as
\begin{subequations}
	\label{eq:forward}
	\begin{flalign}
		\label{eq:HJB_Forward}
		&\overline{\bm{x}}_s^m=\argmin_{\bm{x}_s\ge0, \mathfrak{V}_{s+1}\in\mathbb{R}, \bm{A}^m_s\bm{x}_s=-\bm{B}^m_s\overline{\bm{x}}_{s-1}^m+\bm{b}^m_s}(\bm{c}^{m}_s)^{\top}\bm{x}_s + \mathfrak{V}_{s+1}&\forall s,m=1,...,M,\\
		\label{eq:BendersCutting0_Forward}
		&\mathfrak{V}_{s+1}\ge\bm{g}_{s+1,k}^{\top}\bm{x}_{s}+\beta_{s+1,k}{\rm~where~}\mathfrak{V}_{S+1}=0&\forall s,k\in\bm{\mathcal{K}}_s.
	\end{flalign}
\end{subequations}
A sequence $\{\overline{\bm{x}}_s^m\}_{s=1,...,S}$ obtained from Eq. (\ref{eq:forward}) is used as trial solutions by the backward step stated in Section \ref{backward}.
From the tower property and strong monotonicity of expectation, Eq. (\ref{eq:MSSP_objective}) is equivalent to the end-of-horizon formulation:
\begin{flalign}
	\label{eq:end-of-horizon_expectation}
	&\min_{\{\bm{x}_1,...,\bm{x}_S\}}\mathbb{E}\left[\bm{c}_1^{\top}\bm{x}_1+\bm{c}_2^{\top}\bm{x}_2+\cdots+\bm{c}_S^{\top}\bm{x}_S\right].&
\end{flalign}
Thus, an unbiased estimator of an upper bound for the optimal value of Eq. (\ref{eq:MSSP}) and its variance are given by
\begin{flalign}
	&\hat{\Psi}_{\mu} = \frac{1}{M}\sum_{m=1}^M\Psi^m,&\\
	&\hat{\Psi}^2_{\sigma} = \frac{1}{M-1}\sum_{m=1}^M(\Psi^m-\hat{\Psi}_{\mu})^2,&\\
	&\Psi^m=\bm{c}_1^{\top}\bm{x}_1+\sum_{s=2}^S(\bm{c}_s^m)^{\top}\overline{\bm{x}}^m_s&\forall m.
\end{flalign}
Based on the Central Limit Theorem, $\hat{\Psi}_{\mu}$ has approximately a normal distribution for a large $M$.
The pseudo-code for the forward step is shown in Algorithm \ref{alg:Forward}.

\begin{figure}[!t]
	\begin{algorithm}[H]
		\caption{Forward pass of the SDDP algorithm for $M$ sample paths}
		\label{alg:Forward}
		\begin{algorithmic}[]
			\setlength{\itemsep}{0pt} \setlength{\parskip}{0pt}
			\REQUIRE Finite lower bounds on $\{\mathfrak{V}_s\}_{s=2,...,S}$
			\FOR{$m=1\rightarrow M$}
			\STATE Solve 1st stage problem:
			\STATE $\overline{\bm{x}}_1=\argmin_{\bm{x}_1,\mathfrak{V}_2\in\mathbb{R}}\{\bm{c}_1^{\top}\bm{x}_1+\mathfrak{V}_{2}|\bm{A}_1\bm{x}_1=-\bm{B}_{1}\bm{x}_{0}+\bm{b}_1,~\mathfrak{V}_{2}\ge \bm{g}_{2,k}^{\top}\bm{x}_1+\beta_{2,k},~\forall k\in\bm{\mathcal{K}}^{}_1\}$
			\STATE Generate sample path $m$: $\{\bm{\xi}_2^m,...,\bm{\xi}_S^{m}\}$
			\FOR{$s=2\rightarrow S$}
			\STATE Solve $s$-th stage problem ($\mathfrak{V}_{S+1}=0$):
			\STATE $\overline{\bm{x}}_s^m=\argmin_{\bm{x}_s,\mathfrak{V}_{s+1}\in\mathbb{R}}\{\bm{c}_s^{\top}\bm{x}_s+\mathfrak{V}_{s+1}|\bm{A}^m_s\bm{x}_s=-\bm{B}^m_s\overline{\bm{x}}_{s-1}+\bm{b}^m_s,~\mathfrak{V}_{s+1}\ge \bm{g}_{s+1,k}^{\top}\bm{x}_s+\beta_{s+1,k},~\forall k\in\bm{\mathcal{K}}^{\rm }_s\}$
			\ENDFOR
			\STATE Compute the objective value:
			\STATE ${\Psi}^m\leftarrow \bm{c}_1^{\top}\overline{\bm{x}}_1+\sum_{s=2}^S(\bm{c}_s^m)^{\top}\overline{\bm{x}}_s^m$
			\ENDFOR
			\STATE Compute the sample average $\hat{\Psi}_{\mu}$ and SD $\hat{\Psi}_{\sigma}$:
			\STATE $\hat{\Psi}_{\mu}=\frac{1}{M}\sum_{m=1}^M{\Psi}^m$ and $\hat{\Psi}^2_{\sigma}=\frac{1}{M-1}\sum_{m=1}^M({\Psi}^m-\mu)^2$
			\STATE Update the upper bound: $\overline{\Psi}\leftarrow\hat{\Psi}_{\mu}+z_{\alpha}\hat{\Psi}_{\sigma}/\sqrt{M}$, where $z_{\alpha}$ is $(1-\alpha)$-quantile of the standard normal distribution for $\alpha\in(0,1)$.
		\end{algorithmic}
	\end{algorithm}
\end{figure}

%% file: chxx.tex
\subsection{Extensions of SDDP}
\label{app:sddp_extensions}

{This subsection reviews several extensions of SDDP that relax the standard assumptions stated in Appendix \ref{app:sddp}.
	While many algorithmic variants have been proposed to address modeling and computational challenges, we focus on convergence analyses for MSSPs that relax (i) Assumption \ref{app:assumption_linearity} (Linearity), (ii) Assumption \ref{app:assumption_risk_neutral} (Risk-neutral policy), and (iii) Assumption \ref{app:assumption_continuous} (Continuous variables).
	In particular, we summarize the convergence results in \citet{girardeau2015convergence,guigues2016convergence,zou2019stochastic,dowson2025incorporating}.
	Other extensions are beyond our scope; we refer the reader to the comprehensive review by \citet{fullner2025stochastic}.}

{
\citet{girardeau2015convergence} provide an early convergence analysis of SDDP for multi-stage stochastic convex problems, thereby relaxing Assumption \ref{app:assumption_linearity} (Linearity).
To keep the exposition concrete, consider a convex MSSP in which stage $s$ decisions $(\bm{x}_s,\bm{u}_s)$ are chosen after observing $\bm{\xi}_s$, with convex stage costs and convex constraints:
\begin{subequations}
	\label{eq:MSCP}
	\begin{flalign}
		&V_{s}(\bm{x}_{s-1},{\bm{\xi}}_s)=\min_{\bm{x}_s,\bm{u}_s}\left[C_s(\bm{x}_s,\bm{u}_s,\bm{\xi}_s)+\mathcal{V}_{s+1}(\bm{x}_s)\right]&\forall s,\\
		&{\rm~s.t.~~}\bm{x}_s=T_s(\bm{x}_{s-1}, \bm{u}_s, \bm{\xi}_s)&\forall s,\\
		&~~~~~~~~\bm{g}_s(\bm{x}_s,\bm{u}_s,\bm{\xi}_s)\le\bm{0}&\forall s,
	\end{flalign}
\end{subequations}
where $C_s(\bm{x}_s,\bm{u}_s,\bm{\xi}_s)$ is convex, $T_s(\bm{x}_{s-1}, \bm{u}_s, \bm{\xi}_s)$ is linear, and $\bm{g}_s(\bm{x}_s,\bm{u}_s,\bm{\xi}_s)$ is convex. $\mathcal{V}_{s+1}(\bm{x}_{s})=\mathbb{E}_{{\bm{\xi}}_{s}}\left[V_{s+1}(\bm{x}_{s},{\bm{\xi}}_{s+1})\right]$ is the value function.
Although the value functions $\mathcal{V}_s(\cdot)$ are generally no longer polyhedral, they remain convex, and SDDP can still build valid lower approximations using supporting cuts obtained from subgradients.
A key technical requirement is that bounded subgradients exist.
To ensure this property and that dual information can be used to compute supporting cuts, the standard RCR (Assumption \ref{app:assumption_RCR}) is strengthened as follows:
\begin{ass} (ERCR: Extended relatively complete recourse)
	\label{app:assumption_ERCR}
	Let $\mathcal{L}$ denote the set of indices so that $\bm{g}_s$ is nonlinear.
	For all feasible $\bm{x}_{s-1}$ and all realizations of $\bm{\xi}_{s}$, $\hat{\mathcal{X}}_{s}(\bm{x}_{s-1}, \bm{\xi}_s):=\{(\bm{x}_s,\bm{u}_s)|\bm{x}_{s}=T_s(\bm{x}_{s-1}, \bm{u}_s, \bm{\xi}_s),~g^l_s(\bm{x}_s,\bm{u}_s,\bm{\xi}_s)<0 ~~~\forall l\in \mathcal{L},~g^l_s(\bm{x}_s,\bm{u}_s,\bm{\xi}_s)\le0~~~\forall l\notin \mathcal{L}\}$ is non-empty for all stages $s\in\mathcal{S}$.
\end{ass}
Intuitively, this strengthened recourse assumption can be interpreted as a Slater-type strict feasibility requirement for the nonlinear constraints.
Under this additional assumption, the authors show that an $\varepsilon$-optimal policy can be obtained almost surely after a finite number of iterations for some predefined $\varepsilon>0$; see Theorem 3.1 in \citet{girardeau2015convergence}.
}

{
\citet{guigues2016convergence} generalizes the convergence analysis of \citet{girardeau2015convergence} to risk-averse settings, thereby relaxing Assumption \ref{app:assumption_risk_neutral} (Risk-neutral policy).
Specifically, the author analyzes the convergence of SDDP for convex MSSPs with (conditional) coherent risk measures, a special case of convex risk measures.
The central technique enabling cut construction is the dual representation of coherent risk measures on finite supports given by
\begin{flalign}
	\nonumber
	&\mathbb{F}_s[V_s(\bm{x}_{s-1},\bm{\xi}_s)] = \sup_{\bm{q}_s\in\mathcal{M}(\bm{p}_s)\subseteq\mathcal{P}} \sum_{n=1}^{N_s} q_s^n V_s(\bm{x}_{s-1},\bm{\xi}^n_s),~~~{\rm where~~}\mathcal{P}:=\left\{\bm{q}\in\mathbb{R}^{N_s}\Bigl|\sum_{n=1}^{N_s} q_s^n = 1, {\rm and~} q_s^n\ge0\right\},&
\end{flalign}
where $\mathcal{M}(\bm{p}_s)$ is a convex risk set, and $\bm{q}_s$ denotes risk-adjusted probabilities.
This representation implies that, instead of weighting scenario-wise subgradients by the nominal probabilities $p_s^n$, one can generate cuts for the risk-adjusted value function by weighting them with the optimal risk-adjusted probabilities $\bm{q}_s$. 
Moreover, because coherent risk measures satisfy monotonicity and convexity (Axioms 1 and 3 in Definition \ref{def:convex_risk}), the risk-adjusted value function $\mathbb{F}_s[V_s(\bm{x}_{s-1},\bm{\xi}_s)]$ remains convex.
Following a similar procedure as in \citet{girardeau2015convergence}, the author proves almost sure convergence of SDDP-type algorithms; see Theorem 4.1 in \citet{guigues2016convergence}.
}

{
\citet{zou2019stochastic} provide the first convergence analysis of an SDDP-type algorithm for MSSPs with integer state variables, thereby relaxing Assumption \ref{app:assumption_continuous} (Continuous variables).
In general, introducing integrality destroys convexity of the value functions, which prevents the direct use of supporting cuts.
The authors address this difficulty by focusing on binary state spaces and proving that the value function with respect to binary states can be represented exactly by a convex polyhedral function on the continuous relaxation domain (Theorem~1 in \citealp{zou2019stochastic}).
Since bounded integer variables can be equivalently expressed by a set of binary variables, their framework extends to general bounded integer states.
Building on this property and the strong duality of Lagrangian relaxation
(Theorem 1, \citealp{geoffrion1974lagrangean}),
\citet{zou2019stochastic} propose an SDDP-type algorithm, called SDDiP, that generates cuts from Lagrangian relaxations.
Furthermore, the almost sure finite convergence of the algorithm was proved (see Theorem 3 in \citealp{zou2019stochastic}) under the strengthened RCR assumption given by
\begin{ass} (CCR: Complete continuous recourse)
	\label{app:assumption_CCR}
	Let $\bm{u}_s^{\rm I}$ denote integer decision variables and $\bm{u}_s^{\rm C}$ denote continuous decision variables.
	For all feasible $(\bm{x}_{s},\bm{x}_{s-1},\bm{u}_s^{\rm I})$, and all realizations of $\bm{\xi}_{s}$, ${U}_s^{\rm C}(\bm{x}_{s},\bm{x}_{s-1},\bm{u}_s^{\rm I},\bm{\xi}_s):=\{\bm{u}_s^{\rm C}|(\bm{x}_{s},\bm{u}_s^{\rm I},\bm{u}_s^{\rm C})\in\mathcal{X}_s(\bm{x}_{s-1},\bm{\xi}_s)\}$ is non-empty for all stages $s\in\mathcal{S}$.
\end{ass}
Informally, complete continuous recourse requires that, for all feasible integer state and decision variables, one can always select continuous decisions to satisfy the stage constraints.
\citet{zou2019stochastic} further remark that the convergence results can be extended to settings with nonlinear convex objective functions and constraint sets.
}

{
\citet{dowson2025incorporating} further extend \citet{guigues2016convergence} from coherent risk measures to the broader class of convex risk measures.
A convex conditional risk measure admits a dual representation of the form \citep{follmer2002convex}
\begin{flalign}
	\nonumber
	&\mathbb{F}_s[V_s(\bm{x}_{s-1},\bm{\xi}_s)] = \sup_{\bm{q}_s\in\mathcal{M}(\bm{p}_s)\subseteq\mathcal{P}} \left[\sum_{n=1}^{N_s} q_s^n V_s(\bm{x}_{s-1},\bm{\xi}^n_s)-\alpha_s(\bm{q}_s)\right],&
\end{flalign}
where $\alpha_s(\cdot)$ is a lower semicontinuous convex penalty function.
The authors note that the convergence arguments used in the coherent case do not rely on the more restrictive axioms of coherent risk measures, relative to those of convex risk measures.
Accordingly, convergence results established for coherent risk measures such as in \citet{guigues2016convergence} can be extended to convex risk measures; see Remark 10 in \citet{dowson2025incorporating}.
Furthermore, the authors remark that risk-averse SDDP can be extended to the MSSPs with discrete variables by replacing gradient-based cut computations with the Lagrangian cuts in \citet{zou2019stochastic}; see Remark 11 in \citet{dowson2025incorporating}. 
}

{The above literature indicates that, by strengthening the standard RCR assumption  (Assumption \ref{app:assumption_RCR}), 
	the standard assumptions stated in Appendix \ref{app:sddp} can be relaxed as follows:
}
\begin{ass} (Convexity)
	\label{app:assumption_convexity}
	Given fixed $\bm{\xi}_s$, {the cost function} $C_s$ is { lower semicontinuous, proper, and} convex, {the transition function} $T_s$ is {linear}, and {the action space} $U_s$ is a {non-empty compact mixed integer} convex set for all stages $s\in\mathcal{S}$,
\end{ass}
\begin{ass} (Risk-averse policy)
	\label{app:assumption_convex}
	The conditional risk measure $\mathbb{F}^{}_{\bm{\xi}_{s}}[\cdot]$ is convex for all stages $s\in\mathcal{S}$,
\end{ass}
\begin{ass} (Integer variables)
	\label{app:assumption_integer}
	When integer variables are included, all the $\bm{x}_s$ are integer variables for all stages $s\in\mathcal{S}$.
\end{ass}

%% file: chxxx.tex
\subsection{Description of the Dual SDDP algorithm}
\label{app:dual_sddp}

This subsection reviews the Dual SDDP proposed by \citet{guigues2023duality}.
Dual SDDP applies an SDDP-type iteration scheme to the dynamic programming equations of the dual of a multi-stage stochastic linear problem.
Suppose Assumptions \ref{app:assumption_linearity}--\ref{app:assumption_RCR} hold,
we consider the end-of-horizon formulation of Eq. (\ref{eq:MSSP}) given by
\begin{subequations}
	\label{eq:MSSP_end-of_horizon}
	\begin{flalign}
		&\min_{\{\bm{x}_1,...,\bm{x}_S\}}\mathbb{E}\left[\bm{c}_1^{\top}\bm{x}_1+\bm{c}_2^{\top}\bm{x}_2+\cdots+\bm{c}_S^{\top}\bm{x}_S\right],&\label{eq:MSSP_objective_end-of_horizon}\\
		&{\rm s.t.~~~}\bm{A}_s\bm{x}_s+\bm{B}_s\bm{x}_{s-1}=\bm{b}_s,~{\rm where~}\bm{x}_0 {\rm~and~} \bm{\xi}_1{\rm~is~given}&\forall s. \label{eq:MSSP_transition_end-of_horizon}
	\end{flalign}
\end{subequations}
Dualization of Eq. (\ref{eq:MSSP_transition_end-of_horizon}) leads to the following dual of problem (see \citet{shapiro2009lectures}, Section 3.2.3):
\begin{subequations}
	\label{eq:Dual_MSSP_end-of_horizon}
	\begin{flalign}
		&\max_{\{\bm{\pi}_1,...,\bm{\pi}_S\}}\mathbb{E}\left[\left(\bm{b}_1-\bm{B}_1\bm{x}_0\right)^{\top}\bm{\pi}_1+(\bm{b}_2)^{\top}\bm{\pi}_2+\cdots+\bm{b}_S^{\top}\bm{\pi}_S\right],&\label{eq:Dual_MSSP_objective_end-of_horizon}\\
	&{\rm s.t.~~~}\bm{A}^{\top}_{S}\bm{\pi}_{S} \le \bm{c}_{S},& \\
	&~~~~~~~~\bm{A}^{\top}_{s-1}\bm{\pi}_{s-1} + \mathbb{E}_{\bm{\xi_{s-1}}}[\bm{B}_s^{\top}\bm{\pi}_s]\le \bm{c}_{s-1},~{\rm where~}\bm{\xi}_1{\rm~is~given}&\forall s\in\{2,...,S\}, \label{eq:Dual_MSSP_transition_end-of_horizon}		
	\end{flalign}    
\end{subequations}
{where $\{\bm{\pi}_s\}_{s=1}^{S}$ is a dual variable vector corresponding to Eq.~\eqref{eq:MSSP_transition_end-of_horizon}.
$\bm{\pi}_s$ is a state variable vector connecting two consecutive stages $s-1$ and $s$.}
Using Assumptions \ref{app:assumption_finite_randomness} and \ref{app:assumption_independence}, we can write the dynamic programming equations of Eq. (\ref{eq:Dual_MSSP_end-of_horizon}) as follows:
\begin{subequations}
	\label{eq:Dual_DP}
	\begin{flalign} 
		\nonumber
		&\underline{1{\rm st~stage~subproblem}}&\\
		\label{eq:Dual_DP3}
		&\max_{\bm{\pi}_1}\left(\bm{b}_1-\bm{B}_1\bm{x}_0\right)^{\top}\bm{\pi}_1 + V_{2}(\bm{\pi}_1),&	
	\end{flalign}  
	\vspace{-4ex}
	\begin{flalign}
		\nonumber
		&\underline{s\mathchar`-{\rm th~stage~subproblem}}&\\
		\label{eq:Dual_DP2}
		&V_{s}(\bm{\pi}_{s-1}^{n'})=\max_{\bm{\pi}^1_s,...,\bm{\pi}^{N_s}_s}\sum_{n=1}^{N_s}p_s^n\left[(\bm{b}^n_s)^{\top}\bm{\pi}^n_s + V_{s+1}(\bm{\pi}_s^n)\right]&\forall n'\in\{1,...,N_{s-1}\},\\
		&(\bm{A}^{n'}_{s-1})^{\top}\bm{\pi}^{n'}_{s-1} + \sum_{n=1}^{N_s}p_s^n[(\bm{B}^n_s)^{\top}\bm{\pi}^n_s]\le \bm{c}^{n'}_{s-1},~{\rm where~}\bm{\xi}_1{\rm~is~given}&\forall n'\in\{1,...,N_{s-1}\}, s\in\{2,...,S-1\},
	\end{flalign}
    \vspace{-4ex}
    \begin{flalign}
		\nonumber
		&\underline{S\mathchar`-{\rm th~stage~subproblem}}&\\
		\label{eq:Dual_DP1}
		&V_{S}(\bm{\pi}_{S-1}^{n'})=\max_{\bm{\pi}^1_S,...,\bm{\pi}^{N_S}_S}\sum_{n=1}^{N_S}p_S^n\left[(\bm{b}^n_S)^{\top}\bm{\pi}^n_S\right]&\forall n'\in\{1,...,N_{S-1}\},\\
		&(\bm{A}^n_{S})^{\top}\bm{\pi}^n_{S} \le \bm{c}^n_{S}&\forall n\in\{1,...,N_S\},\\
		&(\bm{A}^{n'}_{S-1})^{\top}\bm{\pi}^{n'}_{S-1} + \sum_{n=1}^{N_S}p_S^n[(\bm{B}^n_S)^{\top}\bm{\pi}^n_S]\le \bm{c}^{n'}_{S-1}&\forall n'\in\{1,...,N_{S-1}\},
	\end{flalign} 
\end{subequations}
{where $\bm{\pi}_s^n$ represent a dual variable vector corresponding to $\bm{\xi}_s^n$ and
$V_s(\bm{\pi}^n_{s-1})$ denotes the value function with respect to state $\bm{\pi}^n_{s-1}$.}

Since each subproblem is linear, the value function is guaranteed to be piecewise concave.
Therefore, as in the primal problem, the upper bound of the value function $V_{s}$ can be tightened by iteratively adding Benders cuts.
As the set of Benders cuts becomes richer with each iteration, the upper bound is a non-increasing function of the iteration count {(Proposition 4.2 in \citet{guigues2023duality})}.

By comparing with Eq. (\ref{eq:MSSP_DP_SSA}), three main observations can be made.
First, the 1-st stage subproblem (\ref{eq:Dual_DP3}) is not bounded.
\citet{guigues2023duality} show that, if the primal problem \eqref{eq:MSSP_end-of_horizon} has a finite optimal value, then there exist finite vectors $\underline{\bm{\pi}}_s,\overline{\bm{\pi}}_s$ such that adding box constraints $\underline{\bm{\pi}}_s\le \bm{\pi}_s^n\le \overline{\bm{\pi}}_s$ does not change the dual optimal value 
{
(Proposition 4.1 in \citet{guigues2023duality}).
Second, even when the problem \eqref{eq:MSSP_end-of_horizon} satisfies the RCR assumption (Assumption \ref{app:assumption_RCR}), the dual DP \eqref{eq:Dual_MSSP_end-of_horizon} may violate the RCR assumption.
To overcome this violation, nonnegative slack variables are introduced in Eq. \eqref{eq:Dual_MSSP_transition_end-of_horizon} 
and penalized in the objective.
Following Theorem 4.1 in \citet{guigues2023duality}), by increasing the penalty coefficient of the objective function with iteration, the deterministic upper bounds obtained from the Dual SDDP algorithm almost surely converge to the optimal value.}
Third, because the subproblems (\ref{eq:Dual_DP2}) and (\ref{eq:Dual_DP1}) do not decompose into separate problems with respect to each $\bm{\pi}^n_s$, the forward pass differs from Algorithm~\ref{alg:Forward}.
At each stage $s$, Dual SDDP solves one linear program in variables $(\bm{\pi}_s^1,\ldots,\bm{\pi}_s^{N_s})$,
and then selects the component corresponding to the sampled realization to define the next state.
Algorithm~\ref{alg:dual_Forward} gives the forward pass in the Dual SDDP algorithm.

\begin{figure}[!t]
	\begin{algorithm}[H]
		\caption{Forward pass of the Dual SDDP algorithm}
		\label{alg:dual_Forward}
		\begin{algorithmic}[]
			\setlength{\itemsep}{0pt} \setlength{\parskip}{0pt}
			\REQUIRE Finite upper bounds on $\{\mathfrak{V}_s\}_{s=2,...,S}$
			\FOR{$m=1\rightarrow M$}
			\STATE Solve 1st stage problem:
			\STATE $\overline{\bm{\pi}}_1=\argmax_{\bm{\pi}_1\in[\underline{\pi},\overline{\pi}],\mathfrak{V}_2\in\mathbb{R}}\{\bm{b}_1^{\top}\bm{\pi}_1+\mathfrak{V}_{2}|\mathfrak{V}_{2}\le \bm{g}_{2,k}^{\top}\bm{\pi}_1+\beta_{2,k},~\forall k\in\bm{\mathcal{K}}^{}_1\}$
			\FOR{$s=2\rightarrow S$}
			\STATE Solve $s$-th stage problem ($\mathfrak{V}_{S+1}=0$):
			\STATE $\{\overline{\bm{\pi}}_s^1,...,\overline{\bm{\pi}}_s^{N_s}\}=\argmax_{\{\bm{\pi}^1_s,...,\bm{\pi}^{N_s}_s\},\{\mathfrak{V}^1_{s+1},...,\mathfrak{V}^{N_s}_{s+1}\}\in\mathbb{R}^{N_s}}\{\sum_{n=1}^{N_s}p^n_s\left[(\bm{b}^n_s)^{\top}\bm{\pi}^n_s+\mathfrak{V}^n_{s+1}\right]$
			\STATE $|{\bm{A}^{n'}_{s-1}}^{\top}\overline{\bm{\pi}}^{n'}_{s-1}+\sum_{n=1}^{N_s}p_s^n\left[(\bm{B}^n_s)^{\top}{\bm{\pi}}^n_{s}\right]\le\bm{c}^{n'}_{s-1},~\mathfrak{V}^n_{s+1}\le \bm{g}_{s+1,k}^{\top}\bm{\pi}^n_s+\beta_{s+1,k},~\forall n,k\in\bm{\mathcal{K}}^{\rm }_s\}$
			\STATE $m$-th sampling $\overline{\pi}^{n'}_s $from $\{\overline{\bm{\pi}}_s^1,...,\overline{\bm{\pi}}_s^{N_s}\}$
			\ENDFOR
			\ENDFOR
		\end{algorithmic}
	\end{algorithm}
\end{figure}

%% file: chxxxx.tex
{
\section{The Proof of Propositions \ref{prop:suff}--\ref{prop:suff3}}
\label{app:prop}

\subsection{The Proof of Proposition \ref{prop:suff}: Almost sure convergence of lower bounds}
\label{app:prop1}

Standard convergence analyses of the SDDP-type algorithms are stated under the RCR-type assumption (Assumption \ref{app:assumption_RCR}--\ref{app:assumption_CCR}), as seen in Appendix \ref{app:sddp0}.
In this paper, we relax Assumption \ref{app:assumption_RCR}--\ref{app:assumption_CCR} by constructing an exact-penalty reformulation, where the problematic {constraints} that may violate the RCR-type infeasibility are {relaxed} and penalized in the objective.
{
Specifically, we prove that the SDDP-type algorithm provides a lower bound to almost surely converge, even when the feasible region for continuous decision variables is empty for all feasible integer variables.}
We then show that, under Conditions 1--6 in Proposition \ref{prop:suff}, a finite penalty coefficient exists so that the penalized problem has the same optimal value and the set of optimal solutions as the original one.
This implies that even without imposing the RCR-type assumption on the original problem, the SDDP-type algorithm can provide a feasible policy that yields an equivalent optimal value via the penalized problem.
}

{
{To keep the exposition concrete, we consider a multi-stage stochastic problem of the form
\begin{subequations}
	\label{eq:MSCiP}
	\begin{flalign}
		&\min_{\bm{x}_1,\bm{u}^{\rm C}_1,\bm{u}^{\rm I}_1}C_1(\bm{x}_1,\bm{u}^{\rm C}_1,\bm{u}^{\rm I}_1,\bm{\xi}_1)+\mathbb{F}_{\bm{\xi}_1}\left[\min_{\bm{x}_2, \bm{u}^{\rm C}_2,\bm{u}^{\rm I}_2}C_2(\bm{x}_2,\bm{u}^{\rm C}_2,\bm{u}^{\rm I}_2,\bm{\xi}_2)+\mathbb{F}_{\bm{\xi}_{2}}\left[\cdots+\mathbb{F}_{\bm{\xi}_{S-1}}\left[\min_{\bm{x}_S,\bm{u}^{\rm C}_S,\bm{u}^{\rm I}_S}C_S(\bm{x}_S,\bm{u}^{\rm C}_S,\bm{u}^{\rm I}_S,\bm{\xi}_S)\right]\right]\right],&
	\end{flalign}
	\vspace{-7ex}
	\begin{flalign}
		&{\rm~s.t.~~}\bm{x}_s=T_s(\bm{x}_{s-1}, \bm{u}^{\rm C}_s,\bm{u}^{\rm I}_s, \bm{\xi}_s)~~~~\label{eq:MSCiP_eq}&\forall s,\\
		&~~~~~~~~\bm{g}_s(\bm{x}_s,\bm{u}^{\rm C}_s,\bm{u}^{\rm I}_s,\bm{\xi}_s)\le\bm{0}~~~~~~~~~~\label{eq:MSCiP_ineq}&\forall s,
	\end{flalign}
\end{subequations}
where $C_s(\bm{x}_s,\bm{u}^{\rm C}_s,\bm{u}^{\rm I}_s,\bm{\xi}_s)$ is lower semicontinuous, proper, convex cost function,
$T_s(\bm{x}_{s-1}, \bm{u}^{\rm C}_s,\bm{u}^{\rm I}_s, \bm{\xi}_s)$ is linear transition function,
and $\bm{g}_s(\bm{x}_s,\bm{u}^{\rm C}_s,\bm{u}^{\rm I}_s,\bm{\xi}_s)$ is convex lower semicontinuous function. 
$\bm{u}^{\rm C}_s $ and $\bm{u}^{\rm I}_s$ are continuous and discrete decision variables, respectively.
To avoid notational complexity, we sometimes write decision variables as $\bm{u}_s=(\bm{u}^{\rm C}_s,\bm{u}^{\rm I}_s)$. 
}

As a first step, we prove a risk-neutral version of Proposition \ref{prop:suff}, that is
\begin{prop}
	\label{prop:suff0}
	The sufficient conditions for the lower bound of the value function {$V_1$} in Eqs. (\ref{eq:initial_condition})--(\ref{eq:terminal_condition}) to converge almost surely and non-decreasingly to the global optimum using the SDDP algorithm in a finite number of iterations are as follows:
	\begin{enumerate}
		\setcounter{enumi}{9}
		\item The conditional risk measure $\mathbb{F}^{}_{\bm{\xi}_{s}}$ is $\mathbb{E}^{}_{\bm{\xi}_{s}}$ for all stages $s\in\mathcal{S}$, and 
	\end{enumerate}
	Conditions 1, 3--6 in Proposition \ref{prop:suff}, where superscripts are dropped as the same conditions are required for strategic and operational subproblems.
\end{prop}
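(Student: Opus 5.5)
The proof reduces the risk-neutral problem without relatively complete recourse to a penalized problem that does satisfy the hypotheses of the existing SDDiP-type convergence theory.

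\textbf{Step 1: penalized reformulation.} In the risk-neutral case, Eqs.~\eqref{eq:MSCiP} are a multi-stage stochastic mixed-integer convex program, and Assumption~\ref{app:assumption_CCR} is the only standard hypothesis that may fail. Condition~1 gives convexity, compactness and linear transitions. Condition~3 gives integer states, hence binarizable states by Remark~\ref{rem:integer}. Conditions~4--6 give a finite horizon, finite support and stagewise independence. I would relax the constraints \eqref{eq:MSCiP_ineq} by nonnegative slack variables $\bm{r}_s$, writing $\bm{g}_s(\cdot)\le\bm{r}_s$ and $\bm{r}_s\le\bar{\bm{r}}_s$. Each slack is penalized by adding $\omega\bm{1}^{\top}\bm{r}_s$ to $C_s$. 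The bound $\bar{\bm{r}}_s$ is chosen large enough that $\bm{g}_s(\bm{x}_s,\bm{u}_s,\bm{\xi}_s)<\bar{\bm{r}}_s$ holds on the whole compact set $U_s$; this is possible because $\bm{g}_s$ is lower semicontinuous and convex, and I would take $U_s$ as the domain.

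The penalized problem then has three properties. First, it has complete continuous recourse, since the slack absorbs any violation. Second, its relaxed nonlinear constraints are strictly satisfiable, which gives the Slater-type Assumption~\ref{app:assumption_ERCR} and the constraint qualification. Third, its stage costs remain lower semicontinuous, proper and convex. The results of \citet{zou2019stochastic}, namely Lagrangian cuts on binarized states, tight on the convexified value function, apply to it. Their extension to convex objectives and constraints, combined with \citet{girardeau2015convergence} for the continuous convex parts, then yields almost sure finite convergence of the lower bound to the optimal value of the penalized problem. Monotonicity holds because cuts only accumulate.

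\textbf{Step 2: exactness of the penalty.} It remains to show that, for some finite $\omega$, the penalized and original problems have the same optimal value and the same solution set whenever the original problem is feasible. Under Conditions~4--5 the scenario tree is finite. With $\mathbb{F}=\mathbb{E}$, the tower property gives the deterministic equivalent, a single finite-dimensional mixed-integer convex program whose objective is the probability-weighted sum of stage costs, as in Eq.~\eqref{eq:end-of-horizon_expectation}. Its integer variables (states and integer controls) range over a finite set because of compactness. I would enumerate the integer assignments. For each assignment whose continuous subproblem is feasible, that subproblem is a convex program, and the slack formulation is a standard $\ell_1$ exact penalty. Under the constraint qualification, or via the exact-penalty results of \citet{lefebvre2024exact}, a finite $\omega$ exceeding the $\ell_\infty$-norm of some optimal multiplier recovers the original value. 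For each infeasible assignment, the minimal total violation is bounded below by some $\delta>0$, because the violation function is lower semicontinuous and the set is compact. A penalty $\omega>(\text{cost range})/\delta$ therefore makes such assignments strictly worse than any feasible one. Taking the maximum of these finitely many thresholds gives a single finite $\omega^*$.

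\textbf{Main obstacle.} I expect Step~2 to be the hardest part, specifically two issues. The first is ensuring that the multiplier bound holds uniformly across all feasible integer assignments. The second is ensuring that the optimal value of the penalized problem does not come from an infeasible assignment whose violation $\delta$ could be arbitrarily small. Finiteness of the integer set, from compactness and Condition~3, together with lower semicontinuity on compact sets, is what I would rely on to control both. I would also check that the lower-bound convergence of Step~1 transfers to $V_1$: for $\omega\ge\omega^*$ the penalized value equals the original value, so the SDDP lower bounds of the penalized problem converge to the original global optimum.
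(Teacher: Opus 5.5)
Your architecture is the same as the paper's. You slack-and-penalize so that the penalized MSSP meets the recourse hypotheses of the existing SDDiP/convex-SDDP convergence theory, pass to the finite deterministic equivalent via the tower property, and show that the $\ell_1$ penalty is exact for a finite coefficient.

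\textbf{Step~2 (exactness): correct, different route.} The paper does not prove exactness itself. It checks three hypotheses on the deterministic equivalent: non-empty compact feasible set, convex objective, and a Slater-type dichotomy. The dichotomy says that for every feasible assignment of integer states and controls, the continuous restriction is either strictly feasible in its nonlinear constraints (A) or empty (B). It then invokes Theorems~10 and~14 of Lefebvre et al. You instead reprove that result in this setting by enumerating the finitely many integer assignments. On each feasible assignment you use a multiplier-based $\ell_1$ threshold, which is what branch (A) is for. On each infeasible assignment you use a positive minimal violation $\delta$ from lower semicontinuity and compactness, which is what branch (B) is for. Your route is self-contained and shows why a single finite coefficient exists and exactly where the constraint qualification enters. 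It only needs multipliers to exist on each feasible assignment, which is slightly weaker than strict feasibility. The paper's route is shorter and gets coincidence of solution sets directly from the cited theorem.

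\textbf{Step~1: a gap as written, easily closed.} You put slack only on $\bm{g}_s$, keep the state equation $\bm{x}_s=T_s(\bm{x}_{s-1},\bm{u}_s,\bm{\xi}_s)$ hard, and assert complete continuous recourse because ``the slack absorbs any violation.'' Violations that propagate through the state equation are not absorbed. Once an earlier stage is relaxed, it can hand over incoming states the original problem never reaches. A hard linear transition combined with integrality of $\bm{x}_s$ (Condition~3) or hard bounds on $\bm{x}_s$ can then be unsatisfiable. For example, suppose $x_1\in\mathbb{Z}$ is forced to $0$ only by an inequality in $\bm{g}_1$, and stage~2 has the control-free transition $x_2=x_1/2$ with $x_2\in\mathbb{Z}$. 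The original problem is feasible, but your penalized problem admits $x_1=1$ and is then infeasible at stage~2. So the convergence theory you invoke does not apply. The paper avoids this by also relaxing the transition with the slacks $\bm{\zeta}^{\pm}_{s,\mathrm{x}}$ in Eq.~\eqref{eq:MSCiP_slack_eq}. Adopting that leaves your Step~2 intact: the transition residuals simply enter the violation function and the multiplier bound.

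\textbf{Choice of $\bar{\bm{r}}_s$.} Lower semicontinuity and convexity give attainment of minima, which you use correctly for $\delta$, but not upper bounds. A closed convex function finite on a compact set can be unbounded above, e.g.\ $x^4/(1-y)^3$ on the closed unit disk, extended by $0$ at $(0,1)$. You need $\bm{g}_s$ bounded above on the domain, which is automatic for the polyhedral data in both applications. With your hard transitions, that bound must also hold uniformly over the incoming states produced by the relaxed earlier stages.
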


\begin{proof} [Proof of Proposition \ref{prop:suff0}]
{
    Given Conditions 1--6, 
    we consider the penalized problem of a risk-neutral version of Eq.~\eqref{eq:MSCiP} as follows:
    \begin{subequations}
    	\label{eq:MSCiP_slack}
    	\begin{flalign}
        \nonumber
    		&\min_{\bm{x}_1,\bm{u}^{\rm C}_1,\bm{u}^{\rm I}_1,\bm{\zeta}^{\pm}_1}C^\lambda_1(\bm{x}_1,\bm{u}^{\rm C}_1,\bm{u}^{\rm I}_1,\bm{\xi}_1)+\mathbb{E}_{\bm{\xi}_1}\Biggl[\min_{\bm{x}_2, \bm{u}^{\rm C}_2,\bm{u}^{\rm I}_2,\bm{\zeta}^{\pm}_2}C^\lambda_2(\bm{x}_2,\bm{u}^{\rm C}_2,\bm{u}^{\rm I}_2,\bm{\xi}_2)&\\
            &~~~~~~~~~~~~~~~~~~~~~~~~~~~~~~~~~~~~~~~~~~~~~~~~~~~+\mathbb{E}_{\bm{\xi}_{2}}\left[\cdots+\mathbb{E}_{\bm{\xi}_{S-1}}\left[\min_{\bm{x}_S,\bm{u}^{\rm C}_S,\bm{u}^{\rm I}_S,\bm{\zeta}^{\pm}_S}C^\lambda_S(\bm{x}_S,\bm{u}^{\rm C}_S,\bm{u}^{\rm I}_S,\bm{\xi}_S)\right]\right]\Biggr],&
        \end{flalign}
		\vspace{-2ex}
		\begin{flalign}
            &{\rm~s.t.~~}C^\lambda_s(\bm{x}_s,\bm{u}^{\rm C}_s,\bm{u}^{\rm I}_s,\bm{\xi}_s):=C_s(\bm{x}_s,\bm{u}^{\rm C}_s,\bm{u}^{\rm I}_s,\bm{\xi}_s)+\bm{\lambda}^{\top}(\bm{\zeta}_s^{+} + \bm{\zeta}_s^{-})~~~~~~~~~~~~~~~~~~~~~~~~~~~~~~~~~~~~~~~~~~~~~~~~~~~~\label{eq:MSCiP_slack_cost}&\forall s,\\
    		&~~~~~~~~\bm{x}_s=T_s(\bm{x}_{s-1}, \bm{u}^{\rm C}_s,\bm{u}^{\rm I}_s, \bm{\xi}_s)+\bm{\zeta}_{s,{\rm x}}^{+}-\bm{\zeta}_{s,{\rm x}}^{-}~~~~~~~~~~~~~~~~~~~~~~~~~~~~~~~~~~~~~~~~~~~~~~~~~~~~~~~~~~~~~~~~~~~~~~~~~~~~~~~~\label{eq:MSCiP_slack_eq}&\forall s,\\
    		&~~~~~~~~\bm{g}_s(\bm{x}_s,\bm{u}^{\rm C}_s,\bm{u}^{\rm I}_s,\bm{\xi}_s)\le\bm{\zeta}_{s,{\rm u}}^{+}~~~~~~~~~~~~~~~~~~~~~~~~~~~~~~~~~~~~~~~~~~~~~~~~~~~~~~~~~~~~~~~~~~~~~~~~~~~~~~~~~~~~~~~~~~~~~~~~~~~~~~\label{eq:MSCiP_slack_ineq}&\forall s,
    	\end{flalign}
    \end{subequations}
    where $\bm{\lambda}$ is a predefined penalty coefficient vector.
    $\bm{\zeta}_{s,\rm x}^{\pm}$ and $\bm{\zeta}_{s,\rm u}^{+}$ are non-negative slack variable vectors for Eqs.~\eqref{eq:MSCiP_eq} and \eqref{eq:MSCiP_ineq}.
    To avoid notational complexity, we sometimes write slack variables as $\bm{\zeta}^{\pm}_s=(\bm{\zeta}^{\pm}_{s,{\rm x}},\bm{\zeta}^{+}_{s,{\rm u}})$. 
    Since all the constraints are relaxed, 
    the penalized problem \eqref{eq:MSCiP_slack} is feasible and thus satisfies the RCR-type infeasibility.
    The penalized problem \eqref{eq:MSCiP_slack} consists of a stage cost given by $C_s^\lambda$, a transition function given by $T_s+\bm{\zeta}_{s,{\rm x}}^{+}-\bm{\zeta}_{s,{\rm x}}^{-}$, a nonlinear function given by $\bm{g}_s-\bm{\zeta}_{s,{\rm u}}^{+}$, and additional continuous decision variables $\bm{\zeta}_{s}^{\pm}$.
    Therefore, as long as $\bm{\zeta}_{s}^{\pm}$ is defined on a compact space\footnote{
        The compact action space and the RCR-type assumption are both satisfied when each $\bm{\zeta}_{s}^{\pm}$ is defined on $[0, \overline{M}]$ using a sufficiently large constant $\overline{M}$.
    }, Conditions 1 and 3 remain satisfied; consequently, 
    the SDDP-type algorithm provides a lower bound for the optimal value of the penalized problem \eqref{eq:MSCiP_slack} that converges almost surely.
    In the following, we prove that the optimal value and optimal policy of the penalized problem \eqref{eq:MSCiP_slack} coincide with those of the original problem \eqref{eq:MSCiP}.   }
      
	Since the realizations of $\bm{\xi}_s$ are finite and stagewise independent (Conditions 5 and 6) and
	the length of the planning horizon $S$ is finite (Condition 4),  
	by representing the sequence of possible realizations in the form of scenario tree,
	we can rewrite the original problem \eqref{eq:MSCiP} and penalized problem \eqref{eq:MSCiP_slack} as equivalent mixed integer convex problems, called deterministic equivalent\footnote{	The extensive form possesses an exorbitant number of variables and constraints. 
		This transformation is intended to theoretically prove the equivalence of the problems.
		SDDP does not solve the extensive form directly.}.
	Denoting $J$ as the number of scenarios, each scenario, numbered $j\in\{1,...,J\}$, is associated with a data sequence $\{\bm{\xi}_1^j,...,\bm{\xi}_S^j\}$ and its probability ${p}^j$ and the corresponding sequence of states $\bm{x}^j=\{\bm{x}^j_1,...,\bm{x}^j_S\}$, decisions $\bm{u}^j=\{\bm{u}^j_1,...,\bm{u}^j_S\}$, {and slacks $\bm{\zeta}^{j,\pm}=\{\bm{\zeta}^{j,\pm}_1,...,\bm{\zeta}^{j,\pm}_S\}$}.
	The deterministic equivalent of a risk-neutral version of Eq. \eqref{eq:MSCiP} can be written as follows:
    \begin{subequations}
		\label{eq:DE_MSCiP}
		\begin{flalign}
			&\min_{\{\bm{x}^j\}_{j=1}^J, \{\bm{u}^j\}_{j=1}^J}\sum_{j=1}^Jp^j\left[\sum_{s=1}^S C_s(\bm{x}^j_s, \bm{u}^j_s, \bm{\xi}^j_s)\right]&\\
			&{\rm~s.t.~~}\bm{x}^j_s=T_s(\bm{x}^{j}_{s-1}, \bm{u}^j_s, \bm{\xi}^j_s)~~~~~~~~~~~~~~~~~~~~\label{eq:DE_MSCiP_eq}&\forall s,j,\\
			&~~~~~~~~\bm{g}_s(\bm{x}^j_s,\bm{u}^j_s,\bm{\xi}^j_s)\le\bm{0}~~~~~~~~\label{eq:DE_MSCiP_ineq}&\forall s,j,\\
			&~~~~~~~~\bm{x}_s^j=\bm{x}_s^{j'}~~~~~~~~~~~~\label{eq:DE_MSCiP_ant_x}&\forall s,j,j' {\rm~such~that~} \bm{\xi}^j_{[s]}=\bm{\xi}^{j'}_{[s]},\\
			&~~~~~~~~\bm{u}_s^j=\bm{u}_s^{j'}~~~~~~~\label{eq:DE_MSCiP_ant_u}&\forall s,j,j' {\rm~such~that~} \bm{\xi}^j_{[s]}=\bm{\xi}^{j'}_{[s]},
		\end{flalign}
	\end{subequations}
    where Eqs.~\eqref{eq:DE_MSCiP_ant_x} and \eqref{eq:DE_MSCiP_ant_u} are called the nonanticipativity constraints.
    Next, the deterministic equivalent of the penalized problem \eqref{eq:MSCiP_slack} is given by
    \begin{subequations}
    \label{eq:DE_MSCiP_LR}
    	\begin{flalign}
            \label{eq:DE_MSCiP_eq_slack_objective}
    		&{\min_{\{\bm{x}^j\}_{j=1}^J, \{\bm{u}^j\}_{j=1}^J, \{\bm{\zeta}^{j,\pm}\}_{j=1}^J}\sum_{j=1}^Jp^j\left[\sum_{s=1}^S \left[C^\lambda_s(\bm{x}^j_s, \bm{u}^j_s, \bm{\xi}^j_s)\right]\right]} &\\
            \label{eq:DE_MSCiP_eq_slack}
            &{{\rm~s.t.~~}C^{\lambda}_s(\bm{x}^j_s, \bm{u}^{j}_s, \bm{\xi}^j_s):=C_s(\bm{x}^j_s, \bm{u}^j_s, \bm{\xi}^j_s)+\bm{\lambda}^{\top}(\bm{\zeta}_s^{j,+} + \bm{\zeta}_s^{j,-})~}~~~~~~~~~~~~~&\forall s,j,\\
            &{~~~~~~~~\bm{x}^j_s=T_s(\bm{x}^{j}_{s-1}, \bm{u}^j_s, \bm{\xi}^j_s)+\bm{\zeta}_{s,{\rm x}}^{j,+}-\bm{\zeta}_{s,{\rm x}}^{j,-}~~~~~~~~~~~~~~}\label{eq:DE_MSCiP_eq}&\forall s,j,\\
			&{~~~~~~~~\bm{g}_s(\bm{x}^j_s,\bm{u}^j_s,\bm{\xi}^j_s)\le\bm{\zeta}_{s,{\rm u}}^{j,+}~~~~~~~~~~~~~~~~~~~~}\label{eq:DE_MSCiP_ineq}&\forall s,j,
		\end{flalign}
		\vspace{-4ex}
		\begin{flalign}
            \nonumber
			&~~~~~~~~~~~~~~~~~~~~~~~~~~~~~~~~~~~~~~~~~~~~~~~~~~~~~~~~~~~~~~~~~~~~~~~~~~~~~~~~~~~~~~~~~~~~~~~~~~~~~~~~~~~~~~~~~~~~~~~~~~~~~~~~~~~~~~~~~~~~~~~{\rm Eqs.~\eqref{eq:DE_MSCiP_ant_x}~and~\eqref{eq:DE_MSCiP_ant_u}.}&
		\end{flalign}
    \end{subequations}
    
    According to Theorems 10 and 14 in \citet{lefebvre2024exact}\footnote{
        Theorems 10 and 14 in \citet{lefebvre2024exact} present more general conditions under which the optimal value and the set of optimal solutions for the mixed-integer nonlinear optimization problem coincide with those of the penalized problem.
        The conditions under which Eq.~\eqref{eq:DE_MSCiP} is equivalent to the penalized problem by Lagrangian relaxation using the penalty function $\Psi$ with a finite penalty coefficient are the three conditions already mentioned, in addition to $\psi(\cdot)=\|\cdot\|$ for some norm $\|\cdot\|$ (see Definition 2 and Assumptions 3--4 in \citet{lefebvre2024exact}).
        Eq.~\eqref{eq:DE_MSCiP_LR} is a penalized problem that adopts the $L_1$-norm as the penalty function (i.e., $\psi(\cdot)=\|\cdot\|_1$).
    },
    there exists a finite penalty coefficient vector $\bm{\lambda}$
    such that the optimal value and the set of optimal solutions of Eq.~\eqref{eq:DE_MSCiP_LR} coincide with those of Eq.~\eqref{eq:DE_MSCiP}
    when the following conditions hold:
    \begin{itemize}
        \setlength{\itemsep}{0pt} \setlength{\parskip}{0pt}
        \item [i)]{ The feasible region is non-empty and compact,}
        \item [ii)]{The objective function is convex, and }
        \item [iii)] {The following Slater condition holds.
            Let $\mathcal{L}$ denote the set of indices so that $\bm{g}_s$ is nonlinear.
    		For all feasible $(\{\bm{x}^j\}_{j=1}^J,\{\bm{u}^{{\rm I},j}\}_{j=1}^J)$, one of the two following conditions is met:
    		\begin{itemize}
    			\setlength{\itemsep}{0pt} \setlength{\parskip}{0pt}
    			\item [(A)] $U^{\rm C}:=\{\{\bm{u}^{{\rm C},j}\}_{j=1}^J|\bm{x}^j_{s}=T_s(\bm{x}^j_{s-1}, \bm{u}^j_s, \bm{\xi}^j_s)~\forall s,j,~ g^l_s(\bm{x}^j_s,\bm{u}^j_s,\bm{\xi}^j_s)<0~\forall s,j,l\in\mathcal{L},~{\rm and}~g^l_s(\bm{x}^j_s,\bm{u}^j_s,\bm{\xi}^j_s)\le0~\forall s,j,l\notin\mathcal{L}\}$ is non-empty, or
    			\item [(B)] $U^{\rm C}:=\{\{\bm{u}^{{\rm C},j}\}_{j=1}^J|\bm{x}^j_{s}=T_s(\bm{x}^j_{s-1}, \bm{u}^j_s, \bm{\xi}^j_s)~~\forall s,j,~ g_s(\bm{x}^j_s,\bm{u}^j_s,\bm{\xi}^j_s)\le0~~\forall s,j\}$ is empty.
    		\end{itemize}}
    \end{itemize}
    
    These three conditions correspond, respectively, to Assumptions 1, 2, and 5 in \citet{lefebvre2024exact}.
    Under Condition 1, the feasible region of the original problem \eqref{eq:MSCiP} is compact and its objective function is convex.
    Since the linear constraints \eqref{eq:DE_MSCiP_ant_x} and \eqref{eq:DE_MSCiP_ant_u} added to the deterministic equivalent preserve the compactness and convexity,
    the first two conditions are satisfied.
    For the last condition, (A) 
    is a strict feasibility requirement for the continuous restriction obtained by fixing the integer variables and guarantees the strong duality of the problem.
    Condition (B), by contrast, permits the continuous feasible set $U^{\rm C}$ to be empty for some feasible integer variables\footnote{
        RCR-type assumption would require the continuous feasible set $U^{\rm C}$ to be non-empty for every integer state and decision variables.
        The Slater-type condition is strictly weaker than RCR while
        still constituting an appropriate constraint qualification for the exact-penalty
        reformulation.
    }.
    Therefore, under the Slater condition, which constitutes an appropriate constraint qualification,
    the optimal values and optimal policies of Eqs. \eqref{eq:DE_MSCiP} and \eqref{eq:DE_MSCiP_LR} coincide.
    Finally, since Eqs. \eqref{eq:DE_MSCiP} and \eqref{eq:DE_MSCiP_LR} are deterministic equivalent to Eqs. \eqref{eq:MSCiP} and \eqref{eq:MSCiP_slack}, respectively,
    the optimal values and optimal policies of Eqs. \eqref{eq:MSCiP} and \eqref{eq:MSCiP_slack} also coincide\footnote{
        This equivalence means that the value function $V_1$ of the penalized MSSP provided by SDDP converges almost surely to the optimal value function of the original problem.
        Although the value functions $V_s$ for the dynamic programming equations of the penalized MSSP for other stages $s \ge 2$ are valid with respect to those of the original problem, they are not tight.
    }.
\end{proof} }

{
Let us move on to the proof of Proposition \ref{prop:suff}.
To avoid relying on the RCR-type assumptions \ref{app:assumption_RCR}--\ref{app:assumption_CCR}, we strengthen Assumption \ref{app:assumption_convex} as follows:
\begin{ass} (Consistent risk-averse policy)
	\label{app:assumption_consistency}
	$\mathbb{F}^{}_{\bm{\xi}_{s}}[\cdot]$ is conditionally consistent and convex for all stages $s\in\mathcal{S}$.
\end{ass}

A canonical example of a convex, conditionally consistent risk measure is the entropic risk measure, see Definition \ref{def:entropic_risk}.
Within the class of risk measures commonly adopted in the optimization setting for time-consistent decision-making (e.g., law invariance; see \citealp{homem2016risk}), the combination of convexity and conditional consistency is highly restrictive.
\citet{kupper2009representation} show that, in Theorem 1.10, these conditions force the risk measure to belong to the entropic class.
Expected conditional risk measures ({ECRMs}) can be alternatives that satisfy both convexity and conditional consistency, depending on the choice of conditional risk measures $\mathbb{F}_{\bm{\xi}_s}$.
However, explicit dynamic programming reformulations of MSSPs with ECRMs rely on adding continuous state variables; thus, they fail to satisfy Condition 3.
When integer variables are absent, MSSPs with ECRMs can be solved as if the problem were risk-neutral\footnote{\citet{khatami2024risk} defines the property of certain ECRMs that can be solved as if they were risk-neutral problems as risk-neutral convertible properties.}, as seen in Appendix \ref{app:prop3}.
Consequently, for the ECRM settings, the convergence property of the lower bound obtained through SDDP reduces to Proposition \ref{prop:suff0}.
This subsection proves Proposition \ref{prop:suff} for MSSPs equipped with entropic risk measures without significantly compromising generality for time-consistent optimization frameworks.

\begin{proof} [Proof of Proposition {\ref{prop:suff}}]
	Assume the conditional risk measure $\mathbb{F}_{\bm{\xi}_s}$ to be the entropic risk measure $\mathbb{ENT}_{\gamma}$.
    { Given Conditions 1--6, 
    we consider the penalized problem of the MSSP \eqref{eq:MSCiP} as follows:
    \begin{flalign}
    \nonumber
        &\min_{\bm{x}_1,\bm{u}_1,\bm{\zeta}^{\pm}_1}C^\lambda_1(\bm{x}_1,\bm{u}_1,\bm{\xi}_1)+\mathbb{ENT}_{\gamma}\Biggl[\min_{\bm{x}_2, \bm{u}_2,\bm{\zeta}^{\pm}_2}C^\lambda_2(\bm{x}_2,\bm{u}_2,\bm{\xi}_2)+\mathbb{ENT}_{\gamma}\left[\cdots+\mathbb{ENT}_{\gamma}\left[\min_{\bm{x}_S,\bm{u}_S,\bm{\zeta}^{\pm}_S}C^\lambda_S(\bm{x}_S,\bm{u}_S,\bm{\xi}_S)\right]\right]\Biggr],&\\
        \label{eq:MSCiP_ENT_slack}
        &{\rm~s.t.~~Eqs.~\eqref{eq:MSCiP_slack_cost}\mathchar`-\mathchar`-\eqref{eq:MSCiP_slack_ineq}}.&
    \end{flalign}
    As in the risk-neutral case, Conditions 1 and 3 hold as long as $\bm{\zeta}_{s}^{\pm}$ is defined on a compact space.
    The SDDP-type algorithm therefore provides a lower bound for the optimal value of the penalized problem \eqref{eq:MSCiP_ENT_slack} that converges almost surely.
    
    Following a procedure similar to that in the risk-neutral case,
    we prove that the optimal value and optimal policies of the deterministic equivalent of the penalized problem \eqref{eq:MSCiP_ENT_slack} coincide with those of the deterministic equivalent of the original problem \eqref{eq:MSCiP}.}
    From the conditional consistency of $\mathbb{ENT}_{\gamma}$, we can write the end-of-horizon formulation\footnote{For readability, detailed descriptions of measure theory are omitted.
    In the scenario tree representation adopted in this paper, the required measurability conditions are equivalently enforced by the nonanticipativity constraints in the deterministic equivalent.}
    of the MSSP \eqref{eq:MSCiP} as follows:
	\begin{flalign}
		\label{eq:MSCiP_ENT_EOH}
		&\min_{\{\bm{x}_s\}_{s=1}^S,\{\bm{u}_s\}_{s=1}^S}\mathbb{ENT}_{\gamma}\left[C_1(\bm{x}_1,\bm{u}_1,\bm{\xi}_1)+\cdots C_S(\bm{x}_S,\bm{u}_S,\bm{\xi}_S)\right],&\\
		\nonumber
		&{\rm~s.t.~~Eqs.~\eqref{eq:MSCiP_eq}\mathchar`-\mathchar`-\eqref{eq:MSCiP_ineq}}.
	\end{flalign}
    Similarly, the end-of-horizon formulation of the penalized problem \eqref{eq:MSCiP_ENT_slack} is given by
	\begin{flalign}
		\label{eq:MSCiP_ENT_EOH_slack}
		&\min_{\{\bm{x}_s\}_{s=1}^S,\{\bm{u}_s\}_{s=1}^S}\mathbb{ENT}_{\gamma}\left[C^\lambda_1(\bm{x}_1,\bm{u}_1,\bm{\xi}_1)+\cdots C^\lambda_S(\bm{x}_S,\bm{u}_S,\bm{\xi}_S)\right],&\\
		\nonumber
		&{\rm~s.t.~~Eqs.~\eqref{eq:MSCiP_slack_cost}\mathchar`-\mathchar`-\eqref{eq:MSCiP_slack_ineq}}.&
	\end{flalign}
    
	Under Conditions 4--6, using the definition of $\mathbb{ENT}_{\gamma}$, seen in Eq.~\eqref{eq:ENT}, 
	we can write the deterministic equivalent of Eq. \eqref{eq:MSCiP_ENT_EOH} as follows:
	\begin{flalign}
		\label{eq:DE_MSCiP_ENT}
		&\min_{\{\bm{x}^j\}_{j=1}^J, \{\bm{u}^j\}_{j=1}^J}\frac{1}{\gamma}\log\left[\sum_{j=1}^Jp^j\exp\left(\gamma\sum_{s=1}^SC_s(\bm{x}^j_s, \bm{u}^j_s, \bm{\xi}^j_s)\right) \right],&\\
		\nonumber
		&{\rm~s.t.~~Eqs.~\eqref{eq:DE_MSCiP_eq}\mathchar`-\mathchar`-\eqref{eq:DE_MSCiP_ant_u}}.
	\end{flalign}
    {Similarly, the deterministic equivalence of the penalized problem \eqref{eq:MSCiP_ENT_slack} is given by
    \begin{flalign}
		\label{eq:DE_MSCiP_ENT_slack}
		&\min_{\{\bm{x}^j\}_{j=1}^J, \{\bm{u}^j\}_{j=1}^J,\{\bm{\zeta}^{j,\pm}\}_{j=1}^J}\frac{1}{\gamma}\log\left[\sum_{j=1}^Jp^j\exp\left(\gamma\sum_{s=1}^S C^\lambda_s(\bm{x}^j_s, \bm{u}^j_s, \bm{\xi}^j_s)\right) \right],&\\
		\nonumber
		&{\rm~s.t.~~Eqs.~\eqref{eq:DE_MSCiP_ant_x},~\eqref{eq:DE_MSCiP_ant_u},~and~\eqref{eq:DE_MSCiP_eq_slack}\mathchar`-\mathchar`-\eqref{eq:DE_MSCiP_ineq}}.
	\end{flalign}
    
    The deterministic equivalents \eqref{eq:DE_MSCiP_ENT} and \eqref{eq:DE_MSCiP_ENT_slack} are formed by only replacing the objective functions of those for the risk-neutral case, Eqs.~\eqref{eq:DE_MSCiP} and \eqref{eq:DE_MSCiP_LR}, with the log-sum-exp form, which is convex.
    Since the three conditions required by Theorems 10 and 14 in \cite{lefebvre2024exact} are satisfied, under the Slater condition, the optimal values and optimal policies of Eqs. \eqref{eq:DE_MSCiP_ENT} and \eqref{eq:DE_MSCiP_ENT_slack} coincide. 
    Finally, since Eqs. \eqref{eq:DE_MSCiP_ENT} and \eqref{eq:DE_MSCiP_ENT_slack} are deterministic equivalents of Eqs. \eqref{eq:MSCiP} and \eqref{eq:MSCiP_ENT_slack}, respectively, the optimal values and optimal policies of Eqs. \eqref{eq:MSCiP} and \eqref{eq:MSCiP_ENT_slack} also coincide.}
\end{proof}
}

\subsection{The proof of Proposition \ref{prop:suff2}: Statistical upper bounds}
\label{app:prop2}

{
\begin{proof}
	We first prove that the risk-adjusted total cost in the end-of-horizon view when any policy is implemented serves as an upper bound for the optimal value.
	To ensure the feasibility of the policy induced by using the approximated value function at any iteration,
	we consider the penalized version of Eq.~\eqref{eq:MSCiP} with coefficient $\lambda$.
	Let $V^*$ and $V_\lambda^*$ denote the optimal values of Eq.~\eqref{eq:MSCiP} and its penalized version with $\lambda$, respectively.
	For any feasible policy $\Pi_\lambda$ of the penalized problem, we define the stagewise random cost $\{C_{s}(\Pi_\lambda)\}_{s=1}^S$, ,where $C_{s}(\Pi_\lambda):=C_s^\lambda$ and 
	the risk-adjusted total cost in terms of the nested and end-of-horizon view, respectively, as follows:
	\begin{flalign*}
		&\mathbb{F}^{\rm nest}_\lambda[C_{1}(\Pi_\lambda),...,C_{S}(\Pi_\lambda)]:=C_{1}(\Pi_\lambda)+\mathbb{F}_{\bm{\xi}_1}\left[C_{2}(\Pi_\lambda)+\mathbb{F}_{\bm{\xi}_{2}}\left[\cdots+\mathbb{F}_{\bm{\xi}_{S-1}}\left[C_{S}(\Pi_\lambda)\right]\right]\right]&{(\rm nested~risk)},\\
		&\mathbb{F}^{\rm EOH}_\lambda[C_{1}(\Pi_\lambda),...,,C_{S}(\Pi_\lambda)]:=\mathbb{F}\left[C_{1}(\Pi_\lambda)+\cdots+C_{S}(\Pi_\lambda)\right]&{(\rm end\mathchar`-of\mathchar`-horizon~risk)}.
	\end{flalign*}
	Since $V_\lambda^*$ is the minimizer over all feasible policies, we have $V_\lambda^*\le\mathbb{F}^{\rm nest}_{\lambda}[C_1(\Pi_\lambda),...,C_S(\Pi_\lambda)].$
	As seen in the proof of Proposition \ref{prop:suff} in Appendix \ref{app:prop1}, 
	under Conditions 1--6, there exists a finite penalty coefficient $\lambda<\infty$ such that the optimal value of the original problem coincides with that of penalized problem, that is, $V_\lambda^*=V^*.$
	Then, applying conditional consistency to the sequence of $\{C_{s}(\Pi_\lambda)\}_{s=1}^S$ yields $\mathbb{F}^{\rm nest}_\lambda[C_{1}(\Pi_\lambda),...,C_{S}(\Pi_\lambda)]=\mathbb{F}^{\rm EOH}_\lambda[C_{1}(\Pi_\lambda),...,,C_{S}(\Pi_\lambda)]$;
	accordingly, we have 
	\begin{flalign}
		\label{eq:upper_bounds}
		&V^*=V_\lambda^*\le\mathbb{F}^{\rm nest}_\lambda[C_{1}(\Pi_\lambda),...,C_{S}(\Pi_\lambda)]=\mathbb{F}^{\rm EOH}_\lambda[C_{1}(\Pi_\lambda),...,,C_{S}(\Pi_\lambda)].&
	\end{flalign}
	
	We next derive the upper confidence bound for $\mathbb{F}^{\rm EOH}_\lambda[C_{1}(\Pi_\lambda),...,,C_{S}(\Pi_\lambda)]$.
	The total random cost incurred when implementing the policy derived from the current approximation of the value function is denoted as $\Psi=C_{1}(\Pi_\lambda)+\cdots+C_{S}(\Pi_\lambda)$.
	Assume that $\mathbb{F}[\Psi] = h\left(\mathbb{E}[g(\Psi)]\right)$,
	where $h(\cdot)$ is nondecreasing, and that $W:=g(\Psi)$ has finite variance.
	Let $\{\Psi^m\}_{m=1}^M$ be i.i.d.\ samples of $\Psi$ obtained from $M$
	independent Monte Carlo simulations of the SDDP forward pass, and define $W^m := g(\Psi^m)$.
	Let $\mu_W := \frac{1}{M}\sum_{m=1}^M W^m$ and $\sigma_W^2 := \frac{1}{M-1}\sum_{m=1}^M (W^m-\mu_W)^2$.
	For a large $M$, based on the Central Limit Theorem,
	$\mu_W$ has approximately a normal distribution, and $\left[\mu_W-z_{\alpha}\frac{\sigma_W}{\sqrt{M}}, \mu_W+z_{\alpha}\frac{\sigma_W}{\sqrt{M}}\right]$ 
	gives an approximate $100(1-\alpha)$\% confidence interval of $\mathbb{E}\left[W\right]$.
	Finally, based on the nondecreasing property of $h(\cdot)$, 
	we conclude that $h(\mu_W+z_{\alpha}\frac{\sigma_W}{\sqrt{M}})$
	is an asymptotic $100(1-\alpha)\%$ upper confidence bound for $F_\lambda^{\rm EOH}[C_1(\Pi_\lambda),...,C_S(\Pi_\lambda)]$.
	Since $V^* \le F_\lambda^{\rm EOH}[C_1(\Pi_\lambda),...,C_S(\Pi_\lambda)]$ from Eq.~\eqref{eq:upper_bounds}, the same expression is also an asymptotic
	upper confidence bound for the original optimal value $V^*$.
\end{proof}
}

\subsection{The proof of Proposition \ref{prop:suff3}: Almost sure convergence of upper bounds}
\label{app:prop3}

{
\begin{proof}
	We prove Proposition \ref{prop:suff3} by showing that the dynamic programming equations of the dual problem
	associated with $\mathbb{ECRM}$ objective with mean-$\mathbb{CV@R}_\alpha$ {satisfy the three structural requirements
    under which \citet{guigues2023duality} prove that Dual SDDP returns a non-increasing deterministic upper bound (Appendix~\ref{app:dual_sddp}).}	
	Consider a MSSP \eqref{eq:MSSP} with $\mathbb{ECRM}$ objective with conditional mean-$\mathbb{CV@R}_\alpha$ given by
	\begin{subequations}
		\label{eq:MSSP_ECRM}
		\begin{flalign}
			\nonumber	&\min_{\{\bm{x}_1,...,\bm{x}_S\}}\bm{c}_1^{\top}\bm{x}_1+(1-\beta_2)\mathbb{E}_{\bm{\xi}_1}\left[\bm{c}_2^{\top}\bm{x}_2\right]+\beta_2\mathbb{CV@R}_{\alpha,\bm{\xi}_1}\left[\bm{c}_2^{\top}\bm{x}_2\right]&\\
			\nonumber
			&~~~~~~~~~~~~~+\mathbb{E}_{\bm{\xi}_1}\Biggl[(1-\beta_3)\mathbb{E}_{\bm{\xi}_2}\left[\bm{c}_3^{\top}\bm{x}_3\right]+\beta_3\mathbb{CV@R}_{\alpha,\bm{\xi}_2}\left[\bm{c}_3^{\top}\bm{x}_3\right]+\mathbb{E}_{\bm{\xi}_2}\Biggl[\cdots&\\ &~~~~~~~~~~~~~+\mathbb{E}_{\bm{\xi}_{S-2}}\Biggl[(1-\beta_S)\mathbb{E}_{\bm{\xi}_{S-1}}\left[\bm{c}_S^{\top}\bm{x}_S\right]+\beta_S\mathbb{CV@R}_{\alpha,\bm{\xi}_{S-1}}\left[\bm{c}_S^{\top}\bm{x}_S\right]\Biggr]\Biggr]\Biggr],&\\
			&{\rm s.t.~~~}\bm{A}_s\bm{x}_s+\bm{B}_s\bm{x}_{s-1}=\bm{b}_s,~{\rm where~}\bm{x}_0 {\rm~and~} \bm{\xi}_1{\rm~are~given}~~~\label{eq:MSSP_ECRM_Transition}&\forall s.
		\end{flalign}
	\end{subequations}

    Following the definition of $\mathbb{CV@R}$ in Eq.~\eqref{eq:CVaR} and linearizing the max operator in $\mathbb{CV@R}$, the optimization problem formulation \eqref{eq:MSSP_ECRM} is equivalent to the following formulation: 
	\begin{subequations}
		\label{eq:MSSP_ECRM_Epi}
		\begin{flalign}
			\nonumber	&\min_{\{\bm{x}_1,...,\bm{x}_S\},\{\eta_1,...,\eta_{S-1}\},\{v_2,...,v_S\}}\bm{c}_1^{\top}\bm{x}_1 + \beta_2\eta_1
			+\mathbb{E}_{\bm{\xi}_1}\left[(1-\beta_2)\bm{c}_2^{\top}\bm{x}_2 + \beta_3\eta_2 + \frac{\beta_2}{1-\alpha}v_2\right]+&\\
			\nonumber
			&~~~~~~~~~~~~~~~~~~~~~~~~~~~~~~~~~~~~~~~~~~~~~~~~~~~~~~~~~~~~~~~~~~~~~~~~~~+\mathbb{E}_{\bm{\xi}_2}\Biggl[(1-\beta_3)\bm{c}_3^{\top}\bm{x}_3 + \beta_4\eta_3 + \frac{\beta_3}{1-\alpha}v_3+\mathbb{E}_{\bm{\xi}_3}\Biggl[\cdots&\\ &~~~~~~~~~~~~~~~~~~~~~~~~~~~~~~~~~~~~~~~~~~~~~~~~~~~~~~~~~~~~~~~~~~~~~~~~~~+\mathbb{E}_{\bm{\xi}_{S-1}}\Biggl[(1-\beta_S)\bm{c}_S^{\top}\bm{x}_S + \frac{\beta_S}{1-\alpha}v_S \Biggr]\Biggr]\Biggr],&
		\end{flalign}
		\vspace{-4ex}
		\begin{flalign}
			&{\rm s.t.~~~} v_s\ge \bm{c}_s^{\top}\bm{x}_s-\eta_{s-1}~~~~~~~~~~~~~\label{eq:MSSP_ECRM_Epi_ineq}&\forall s\in\{2,...,S\},\\
            &{ {~~~~~~~~} v_s\ge0~~~~~~~~ }\label{eq:MSSP_ECRM_nonnegative}&\forall s\in\{2,...,S\},
		\end{flalign}
		\vspace{-6ex}
		\begin{flalign}
			\nonumber
			&~~~~~~~~~~~~~~~~~~~~~~~~~~~~~~~~~~~~~~~~~~~~~~~~~~~~~~~~~~~~~~~~~~~~~~~~~~~~~~~~~~~~~~~~~~~~~~~~~~~~~~~~~~~~~~~~~~~~~~~~~~~~~~~~~~~~~~~~~~~~~~~~~~~~~~~~~{\rm and~Eq.~\eqref{eq:MSSP_ECRM_Transition}},&
		\end{flalign}
	\end{subequations}
    {where an optimal solution of $\{\eta^*_s\}_{s=1}^{S-1}$ and $\{v^*_s\}_{s=2}^{S}$, respectively, represent the $\alpha$-quantile of the stage cost $\bm{c}^{\top}_s \bm{x}^*_s$ and its excess (i.e., $v^*_s=\max\{\bm{c}_s^\top \bm{x}_s-\eta^*_{s-1},0\}$).
    Problem~\eqref{eq:MSSP_ECRM_Epi} is a risk-neutral MSSP in the state $(x_s,\eta_s)$ augmented with the continuous stage variables $v_s$ and the additional linear constraints~\eqref{eq:MSSP_ECRM_nonnegative} and \eqref{eq:MSSP_ECRM_Transition}.}   
    
	Applying the tower property and strong monotonicity of the expectation operator to Eq. \eqref{eq:MSSP_ECRM_Epi}, we obtain
	\begin{flalign}
		\nonumber	&\min_{\{\bm{x}_1,...,\bm{x}_S\},\{\eta_1,...,\eta_{S-1}\},\{v_2,...,v_S\}}\mathbb{E}\Biggl[\bm{c}_1^{\top}\bm{x}_1 + \beta_2\eta_1
		+\left\{(1-\beta_2)\bm{c}_2^{\top}\bm{x}_2 + \beta_3\eta_2 + \frac{\beta_2}{1-\alpha}v_2\right\}+&\\
		\nonumber
		&~~~~~~~~~~~~~~~~~~~~~~~~~~~~~~~~~~~~~~~~~~~~~~~~~~~~~~~~~~~~~~~~~~~~~~~~~~~~~~~+\left\{(1-\beta_3)\bm{c}_3^{\top}\bm{x}_3 + \beta_4\eta_3 + \frac{\beta_3}{1-\alpha}v_3\right\}+\cdots&\\
        \label{eq:MSSP_ECRM_Epi_EOH_cost}
        &~~~~~~~~~~~~~~~~~~~~~~~~~~~~~~~~~~~~~~~~~~~~~~~~~~~~~~~~~~~~~~~~~~~~~~~~~~~~~~~+\left\{(1-\beta_S)\bm{c}_S^{\top}\bm{x}_S + \frac{\beta_S}{1-\alpha}v_S\right\} \Biggr],&\\
        \nonumber
		&{\rm s.t.~~~} {\rm Eqs.~\eqref{eq:MSSP_ECRM_Transition},~\eqref{eq:MSSP_ECRM_Epi_ineq},~and~\eqref{eq:MSSP_ECRM_nonnegative}}.&
	\end{flalign}
    Linearization of the max operator in $\mathbb{CV@R}$ adds only the linear objective terms $\beta_s\eta_{s-1}$, $\frac{\beta_s}{1-\alpha}v_s$ and the linear constraints~\eqref{eq:MSSP_ECRM_Epi_ineq} and \eqref{eq:MSSP_ECRM_nonnegative}.
    Since the stage costs shown in Eq.~\eqref{eq:MSSP_ECRM_Epi_EOH_cost}, 
    $\bm{c}^{\top}_1 \bm{x}^*_1 + \beta_2\eta_1$,~
    $(1-\beta_2)\bm{c}^{\top}_2 \bm{x}^*_2 + \beta_3\eta_2+\frac{\beta_2}{1-\alpha}v_2$,...,
    $(1-\beta_S)\bm{c}^{\top}_S \bm{x}^*_S+\frac{\beta_S}{1-\alpha}v_S$,
    are linear and the newly added constraints \eqref{eq:MSSP_ECRM_Epi_ineq} and \eqref{eq:MSSP_ECRM_nonnegative} are both linear inequalities,
    the strong duality theorem holds for Eqs.~\eqref{eq:MSSP_ECRM_Epi_EOH_cost}.
    Dualization of Eqs.~\eqref{eq:MSSP_ECRM_Transition},~\eqref{eq:MSSP_ECRM_Epi_ineq},~and \eqref{eq:MSSP_ECRM_nonnegative} lead to the following dual problem
	\begin{subequations}
		\label{eq:Dual_MSSP_ECRM}
		\begin{flalign}
			&\max_{\{\bm{\pi}_1,...,\bm{\pi}_S\},\{{\upsilon}_2,...,{\upsilon}_S\}}\mathbb{E}\left[\left(\bm{b}_1-\bm{B}_1\bm{x}_0\right)^{\top}\bm{\pi}_1+(\bm{b}_2)^{\top}\bm{\pi}_2+\cdots+\bm{b}_S^{\top}\bm{\pi}_S\right],&\\
            \label{eq:Dual_MSSP_ECRM_ineq_S}
			&{\rm s.t.~~~}\bm{A}^{\top}_{S}\bm{\pi}_{S} \le (1-\beta_S + \upsilon_S)\bm{c}_{S},& \\
            \label{eq:Dual_MSSP_ECRM_ineq_1}
			&~~~~~~~~\bm{A}^{\top}_{1}\bm{\pi}_{1} + \mathbb{E}_{\bm{\xi_{1}}}[\bm{B}_2^{\top}\bm{\pi}_2]\le \bm{c}_{1},~{\rm where~}\bm{\xi}_1{\rm~is~given},&\\
            \label{eq:Dual_MSSP_ECRM_ineq_s}
			&~~~~~~~~\bm{A}^{\top}_{s-1}\bm{\pi}_{s-1} + \mathbb{E}_{\bm{\xi_{s-1}}}[\bm{B}_s^{\top}\bm{\pi}_s]\le (1-\beta_{s-1} + \upsilon_{s-1})\bm{c}_{s-1},&\forall s\in\{3,...,S\},\\
            \label{eq:Dual_MSSP_ECRM_eq}
			&~~~~~~~~\beta_s - \mathbb{E}_{\bm{\xi}_{s-1}}[\upsilon_s] = 0&\forall s\in\{2,...,S\},\\
            \label{eq:Dual_MSSP_ECRM_Epi}
			&~~~~~~~~0\le\upsilon_s\le\frac{\beta_s}{1-\alpha}&\forall s\in\{2,...,S\},	
		\end{flalign}
	\end{subequations}
    where $\{\bm{\pi}_s\}_{s=1}^{S}$ and $\{{\upsilon}_s\}_{s=2}^{S}$
    are dual variables corresponding to Eqs.~\eqref{eq:MSSP_ECRM_Transition} and \eqref{eq:MSSP_ECRM_Epi_ineq}, respectively.
    
	Using Assumption \ref{app:assumption_finite_randomness} and \ref{app:assumption_independence}, we can write the dynamic programming equations of Eq. (\ref{eq:Dual_MSSP_ECRM}) as follows:
	\begin{subequations}
		\label{eq:Dual_DP_ECRM}
		\begin{flalign} 
			\nonumber
			&\underline{1{\rm st~stage~subproblem}}&\\
			\label{eq:Dual_DP3_ECRM}
			&\max_{\bm{\pi}_1}\left(\bm{b}_1-\bm{B}_1\bm{x}_0\right)^{\top}\bm{\pi}_1 + V_{2}(\bm{\pi}_1),&	
		\end{flalign} 
		\vspace{-4ex}
		\begin{flalign}
			\nonumber
			&\underline{2{\rm nd~stage~subproblem}}&\\
			\label{eq:Dual_DP2_5_ECRM}
			&V_{2}(\bm{\pi}_{1})=\max_{\{\bm{\pi}^1_2,...,\bm{\pi}^{N_2}_s\},\{\upsilon_{2}^{1},...,\upsilon_{2}^{N_2}\}}\sum_{n=1}^{N_2}p_2^n\left[(\bm{b}^n_2)^{\top}\bm{\pi}^n_2 + V_{3}(\bm{\pi}_2^n,\upsilon_2^n)\right],&\\
			&(\bm{A}_{1})^{\top}\bm{\pi}_{1} + \sum_{n=1}^{N_2}p_2^n[(\bm{B}^n_2)^{\top}\bm{\pi}^n_2]\le \bm{c}_{1},~{\rm where~}\bm{\xi}_1{\rm~is~given},&\\
			&\beta_2 - \sum_{n=1}^{N_2}p_2^n\upsilon^n_2 = 0,&\\
			&0\le\upsilon^n_2\le\frac{\beta_2}{1-\alpha}&\forall n\in\{1,...,N_{2}\},
		\end{flalign}
		\vspace{-2ex}
		\begin{flalign}
			\nonumber
			&\underline{s\mathchar`-{\rm th~stage~subproblem}}&\\
			\nonumber
			&{V_{s}(\bm{\pi}_{s-1}^{n'},\upsilon_{s-1}^{n'})=\max_{\{\bm{\pi}^1_s,...,\bm{\pi}^{N_s}_s\},\{\upsilon_{s}^{1},...,\upsilon_{s}^{N_s}\}}\sum_{n=1}^{N_s}p_s^n\left[(\bm{b}^n_s)^{\top}\bm{\pi}^n_s + V_{s+1}(\bm{\pi}_s^n,\upsilon_s^n)\right]}&
		\end{flalign} 
		\vspace{-2ex}
		\begin{flalign}
			\label{eq:Dual_DP2_ECRM}
			&&\forall n'\in\{1,...,N_{s-1}\}, s\in\{3,...,S-1\},
		\end{flalign} 
		\vspace{-4ex}
		\begin{flalign}
			&(\bm{A}^{n'}_{s-1})^{\top}\bm{\pi}^{n'}_{s-1} + \sum_{n=1}^{N_s}p_s^n[(\bm{B}^n_s)^{\top}\bm{\pi}^n_s]\le (1-\beta_{s-1}+\upsilon^{n'}_{s-1})\bm{c}^{n'}_{s-1}~~~&\forall n'\in\{1,...,N_{s-1}\}, s\in\{3,...,S-1\},\\
			&\beta_s - \sum_{n=1}^{N_s}p_s^n\upsilon^n_s = 0~~~~~~&\forall s\in\{3,...,S-1\},\\
			&0\le\upsilon^n_s\le\frac{\beta_s}{1-\alpha}~~~~~~~~~	&\forall n\in\{1,...,N_{s}\}, s\in\{3,...,S-1\},
		\end{flalign}
		\vspace{-4ex}
		\begin{flalign}
			\nonumber
			&\underline{S\mathchar`-{\rm th~stage~subproblem}}&\\
			\label{eq:Dual_DP1_ECRM}
			&{V_{S}(\bm{\pi}_{S-1}^{n'},\upsilon_{S-1}^{n'})}=\max_{\{\bm{\pi}^1_S,...,\bm{\pi}^{N_S}_S\},\{\upsilon_{S}^{1},...,\upsilon_{S}^{N_S}\}}\sum_{n=1}^{N_S}p_S^n\left[(\bm{b}^n_S)^{\top}\bm{\pi}^n_S\right]&\forall n'\in\{1,...,N_{S-1}\},\\
			&(\bm{A}^n_{S})^{\top}\bm{\pi}^n_{S} \le (1-\beta_S+\upsilon^n_S)\bm{c}^n_{S}&\forall n\in\{1,...,N_S\},\\
			&(\bm{A}^{n'}_{S-1})^{\top}\bm{\pi}^{n'}_{S-1} + \sum_{n=1}^{N_S}p_S^n[(\bm{B}^n_S)^{\top}\bm{\pi}^n_S]\le (1-\beta_{S-1}+\upsilon^{n'}_{S-1})\bm{c}^{n'}_{S-1}&\forall n'\in\{1,...,N_{S-1}\},\\
			&\beta_S - \sum_{n=1}^{N_S}p_S^n\upsilon^n_S = 0,&\\
			&0\le\upsilon^n_S\le\frac{\beta_S}{1-\alpha}&\forall n\in\{1,...,N_{S}\},
		\end{flalign} 
	\end{subequations}
    {where $\bm{\pi}_s^n$ and ${\upsilon}_s^n$ represent dual variables corresponding to $\bm{\xi}_s^n$ and
    $V_s(\bm{\pi}^n_{s-1},{\upsilon}^n_{s-1})$ denotes the value function with respect to the pair of state variables $(\bm{\pi}^n_{s-1},{\upsilon}^n_{s-1})$.
    For the stage $s=2$, since state variables include only $\bm{\pi}^n_{s-1}$ and $\xi_1$ is given, the value function is denoted as $V_2(\bm{\pi}_{1})$.

    We now verify that the augmented dual DP in Eq.~\eqref{eq:Dual_DP_ECRM} satisfies the requirements for applying Dual SDDP reviewed in Appendix~A.3.
    The dual DP with $\mathbb{ECRM}$ objective with mean-$\mathbb{CV@R}_\alpha$ modifies the objective function and introduces additional dual variables and constraints.
    The argument below shows that these changes do not violate the structural properties used by \citet{guigues2023duality}.
    
    First, all subproblems in Eq.~\eqref{eq:Dual_DP_ECRM} remain linear programs.
    The additional dual variables $\upsilon_s^n$ introduced by the linearization of the max operator in $\mathbb{CV@R}$ are continuous, and they enter the dual DP only through linear constraints~\eqref{eq:Dual_MSSP_ECRM_ineq_S},\eqref{eq:Dual_MSSP_ECRM_ineq_s},\eqref{eq:Dual_MSSP_ECRM_eq}, and \eqref{eq:Dual_MSSP_ECRM_Epi}.
    Therefore, the value function $V_{s}$ is piecewise concave with respect to the augmented dual state. 
    For stages $s\geq 3$, this state is the pair $(\bm{\pi}_{s-1}^{n'},\upsilon_{s-1}^{n'})$, and the Benders cuts are constructed in this augmented state space.
    As long as $\upsilon_s^n$ are defined in a compact space, the usual cut-generation argument applies to the augmented value function $V_s(\bm{\pi}_{s-1}^{n'},\upsilon_{s-1}^{n'})$. 
    
    Second, the dual DP~\eqref{eq:Dual_DP_ECRM} does not introduce an additional boundedness issue in the first-stage subproblem.
    As reviewed in Appendix~A.3, $\bm{\pi}_{1}$ is not bounded in the risk-neutral dual DP.
    \citet{guigues2023duality} show that, if the primal problem has a finite optimal value, finite box constraints can be imposed on $\bm{\pi}_{s}$ without changing the dual optimal value.
    The newly introduced variables, $\upsilon_s^n$, require no analogous bounding argument.
    They are already restricted by Eq.~\eqref{eq:Dual_MSSP_ECRM_Epi}, $0 \leq \upsilon_s^n \leq \frac{\beta_s}{1-\alpha}$, where $\alpha\in[0,1)$ and $\beta_s\in[0,1]$.
    Hence, their feasible domain is compact. 
    No incoming $\upsilon_s^n$ appears in the first-stage subproblem, and $\upsilon_s^n$ generated in later stages are bounded by the above constraints.
    
    Third, as in the risk-neutral case, the augmented dual DP may violate relatively complete recourse. 
    We handle this in the same way as \citet{guigues2023duality}: nonnegative slack variables are introduced in the dual inequality constraints \eqref{eq:Dual_MSSP_ECRM_ineq_S}--\eqref{eq:Dual_MSSP_ECRM_ineq_s} and penalized in the objective.
    The only additional structure introduced by the augmented dual DP is the stagewise constraints \eqref{eq:Dual_MSSP_ECRM_eq} and \eqref{eq:Dual_MSSP_ECRM_Epi}.
    These constraints do not obstruct the slack approach for two reasons.
    First, Eqs.~\eqref{eq:Dual_MSSP_ECRM_eq} and \eqref{eq:Dual_MSSP_ECRM_Epi} involve only the stage $s$ variables $\{\upsilon_s^n\}_{n=1}^{N_s}$ and contain no incoming state $\upsilon_{s-1}^{n'}$.
    Second, $\upsilon_s^n=\beta_s$ for all $n$ always satisfies both additional constraints.
    Therefore, Eqs.~\eqref{eq:Dual_MSSP_ECRM_eq} and \eqref{eq:Dual_MSSP_ECRM_Epi} do not create an additional source of infeasibility.
    Appending slack variables to the inequality constraints \eqref{eq:Dual_MSSP_ECRM_ineq_S}--\eqref{eq:Dual_MSSP_ECRM_ineq_s} alone restores relatively complete recourse
    while leaving additional constraints \eqref{eq:Dual_MSSP_ECRM_eq} and \eqref{eq:Dual_MSSP_ECRM_Epi} to be satisfied,
    and Theorem~4.1 of \citet{guigues2023duality} yields almost sure convergence of the deterministic upper bounds to the optimum in finitely many iterations.}
\end{proof}
}

%% file: chxxxxx.tex
\section{Original formulation of the SAV and SAV-BRT Systems}
\label{app:OriginalFormulation}

\subsection{SAV formulation \citep{seo2022multi}}
\label{app:SAV}

\citet{seo2022multi} proposed a multi-objective linear optimization problem for strategic infrastructure planning of SAV systems that explicitly and endogenously considers vehicle operations.
The SAV and traveler flows in the model are described by a macroscopic dynamic traffic assignment (DTA) framework.
The decisions include SAVs' routing (including empty vehicles and occupied vehicles), vehicle-traveler assignment, total number of SAVs, link capacity, and storage capacity.
The multiple objective functions include total travel time of travelers, total distance traveled by SAVs, total number of SAVs, and total infrastructure construction cost.
The multi-objective linear optimization problem is reduced to the corresponding single-objective linear optimization using the weighted sum method.
The optimization problem below extends the original model by including travelers’ schedule costs in its objective function:
\begin{subequations}
	\begin{flalign}
		&\min T + S + D + N + C,&\\
		&{\rm s.t.~~} T=\sum_t\bm{f}^{\top}\bm{y}^t,&\\
		&~~~~~~~ S=\epsilon\sum_{t< l}\bm{1}^{\top}\bm{q}^t+\lambda\sum_{t>l}\bm{1}^{\top}\bm{q}^t,&\\
		&~~~~~~~ D=\sum_t\bm{d}^{\top}\bm{z}^t,&\\
		&~~~~~~~ N=d\bm{1}^{\top}\hat{\bm{z}},&\\
		&~~~~~~~ C=\bm{c}^{\top}\bm{\kappa},&\\
		\label{app:SAV_dynamics}
		&~~~~~~~\sum_j z_{ji}^{t-t_{ji}}=\sum_j z_{ij}^{t}- \delta_{t1} \hat{z}_{i}& \forall t,i, \\
		\label{app:SAV_passenger_flow_dynamics}
		&~~~~~~~\sum_j y_{d,ji}^{l,t-t_{ji}}=\sum_j y_{d,ij}^{l,t} + \delta_{id} q_{d}^{l,t}& \forall t,d,l,i, \\
		\label{app:SAV_passenger_arivals}
		&~~~~~~~\sum_t q_{d}^{l,t} = \sum_o \overline{Q}_{od}^{l}&\forall d,l,\\
		\label{app:SAV_vehicle_capacity}
		&~~~~~~~\sum_{d,l}{y}_{d,ij}^{l,t}\le \rho{z}_{ij}^t &\forall t,ij,,\\
		\label{app:SAV_network_capacity}
		&~~~~~~~{z}_{ij}^t\le {\kappa}_{ij} &\forall t,ij,
	\end{flalign}
\end{subequations}
where all the variables are nonnegative and continuous.
The notations in the scalar form are listed in Tables \ref{tab:variable} and \ref{tab:parameter}.

%% file: chxxxxxx.tex
\subsection{SAV-BRT formulation \citep{maruyama2023integrated}}
\label{app:SAV_BRT}

\citet{maruyama2023integrated} proposed a multi-objective optimization framework for SAV-BRT systems that leverages the flexibility of SAVs and the mass transit capability of BRT.
Their model builds upon the SAV model developed by \citet{seo2022multi}, with additional components for BRT route design and scheduling.
In the model, the continuous flows of SAVs and travelers are represented by the DTA framework, while BRT decisions are modeled as integer variables.
The optimization problem below extends the original model by including travelers' schedule costs in its objective function:
\begin{subequations}
	\begin{flalign}
		&\min T + S +  D + N  + C + {G}&\\
		&{\rm s.t.~~} T=\sum_t\bm{f}^{\top}\bm{y}^t+\sum_t\tilde{\bm{f}}^{\top}\tilde{\bm{y}}^t,&\\
        &~~~~~~~ S=\epsilon\sum_{t< l}\bm{1}^{\top}{\bm{q}}^t+\lambda\sum_{t>l}\bm{1}^{\top}{\bm{q}}^t+\epsilon\sum_{t< l}\bm{1}^{\top}\tilde{\bm{q}}^t+\lambda\sum_{t>l}\bm{1}^{\top}\tilde{\bm{q}}^t,&\\
		&~~~~~~~ D=\sum_t\bm{d}^{\top}\bm{z}^t+\sum_t\tilde{\bm{d}}^{\top}\sum_m\bm{w}^{m,t},&
	\end{flalign}
	\vspace{-4ex}
	\begin{flalign}
		&~~~~~~~ N=d\bm{1}^{\top}\hat{\bm{z}}+\tilde{d}\bm{1}^{\top}\sum_{t,m}\hat{\bm{w}^{m,t}},&\\
		&~~~~~~~ C=\bm{c}^{\top}\bm{\kappa},&\\
		&~~~~~~~ G=(\bm{c}^{\rm BRT})^{\top}\sum_m\bm{g}^m,&
	\end{flalign}
	\vspace{-4ex}
	\begin{flalign}
		&~~~~~~~ \sum_i{a}_{s}^{m}= 1& \forall m, \\
		&~~~~~~~\sum_i{b}_{s}^{m}= 1& \forall m, \\
		\label{app:BRT_SAV_vehicle_flow_dynamics}
		&~~~~~~~ \sum_j z_{ji}^{t-t_{ji}}=\sum_j z_{ij}^{t} - \delta_{t1} \hat{z}_{i}& \forall t,i, \\
		\label{app:BRT_BRT_vehicle_flow_dynamics}
		&~~~~~~~ \sum_j w_{ji}^{m,t-\tilde{t}_{ji}}=\sum_j w_{ij}^{m,t} - \hat{w}_{0i}^{m,t} + \hat{w}_{i0}^{m,t}& \forall m,t,i, \\
		\label{app:BRT_SAV_passenger_flow_dynamics}
		&~~~~~~~ v_{d,i}^{l,t-\tilde{t}_{ii}} + \sum_j y_{d,ji}^{l,t-t_{ji}}=\hat{v}_{d,i}^{l,t} + \sum_j y_{d,ij}^{l,t} + \delta_{id} q_{d}^{l,t}& \forall t,d,l,i,, \\
		\label{app:BRT_BRT_passenger_flow_dynamics}
		&~~~~~~~ \hat{v}_{d,i}^{l,t-\tilde{t}_{ii}} + \sum_j \tilde{y}_{d,ji}^{l,t-\tilde{t}_{ji}}={v}_{d,i}^{l,t} + \sum_j \tilde{y}_{d,ij}^{l,t} + \delta_{id} \tilde{q}_{d}^{l,t}& \forall t,d,l,i, \\
		\label{app:BRT_SAV_passenger_arivals}
		&~~~~~~~ \sum_t q_{d}^{l,t} + \sum_t \tilde{q}_{d}^{l,t} = \sum_o Q_{od}^{l} &\forall d,l,\\
		\label{app:BRT_SAV_vehicle_capacity}
		&~~~~~~~ \sum_{d,l}{y}_{d,ij}^{l,t}\le \rho{z}_{ij}^t &\forall t,ij,\\
		\label{app:BRT_BRT_vehicle_capacity}
		&~~~~~~~ \sum_{d,l}{\tilde{y}}_{d,ij}^{l,t}\le \sum_m\tilde{\rho}{w}_{ij}^{t,m} &\forall t,ij,\\
		\label{app:BRT_SAV_network_capacity}
		&~~~~~~~ {z}_{ij}^t\le {\kappa}_{ij} - \tilde{\kappa}_{ij}g_{ij}^m &\forall m,t,ij,\\
		\label{app:BRT_BRT_route}
		&~~~~~~~ 0\le{w}_{ij}^{m,t}\le {g}_{ij}^m &\forall m,t,ij,\\
		\label{app:BRT_BRT_start}
		&~~~~~~~ 0\le{w}_{0i}^{m,t}\le {a}_{i}^m &\forall m,t,i,\\
		\label{app:BRT_BRT_end}
		&~~~~~~~ 0\le{w}_{i0}^{m,t}\le {b}_{i}^m &\forall m,t,i,\\
		\label{app:BRT_subtour}
		&~~~~~~~ {w}_{ij}^{m,1}=0 &\forall m,ij,\\
		\label{app:BRT_binary}
		&~~~~~~~\bm{a}\in\{\bm{0},\bm{1}\},\bm{b}\in\{\bm{0},\bm{1}\},\bm{g}\in\{\bm{0},\bm{1}\},\bm{w}_s\in\{\bm{0},\bm{1}\},\bm{\hat{w}}\in\{\bm{0},\bm{1}\},&
	\end{flalign}
\end{subequations}
where all the variables are nonnegative.
The notations in scalar form are listed in Tables \ref{tab:variable2} and \ref{tab:parameter2}.

\section{The Proof of Proposition 4}
\label{app:prop4}

\begin{proof}
	Under Conditions 4--9 in Proposition \ref{prop:suff3}, the duality of risk-neutral MSSPs results in the general duality theorem of LP \citep{shapiro2009lectures}.
	A part of the optimality conditions of Eq. (\ref{eq:SAV-NR-NDP}) are given by
	\begin{flalign}
		\label{eq:strategic_kap_complementary}
		&\begin{cases}
			-\theta_{s,ij} + \hat{c}_{s,ij} \ge 0 & {\rm if~~} \hat{\kappa}_{s,ij}=0 \\
			-\theta_{s,ij} + \hat{c}_{s,ij} = 0 & {\rm if~~} \hat{\kappa}_{s,ij}\ge0
		\end{cases}&\forall ij,s\in\mathcal{S}^{\rm F},\\
		&\begin{cases}
			\theta_{s-1,ij} - \mathbb{E}_{\bm{\xi}^{\rm F}}[\theta_{s,ij}] \ge 0 & {\rm if~~} {\kappa}_{s,ij}=0 \\
			\theta_{s-1,ij} - \mathbb{E}_{\bm{\xi}^{\rm F}}[\theta_{s,ij}] = 0 & {\rm if~~} {\kappa}_{s,ij}\ge0
		\end{cases}&\forall ij,s\in\mathcal{S}^{\rm F},\\
		&\begin{cases}
			\theta_{s-1,ij} + {c}_{s-1,ij} - \mathbb{E}_{\bm{\xi}^{\rm G}}[\theta_{s,ij} + \sum_tp^{t}_{s,ij}] \ge 0 & {\rm if~~} {\kappa}_{s,ij}=0 \\
			\theta_{s-1,ij} + {c}_{s-1,ij} - \mathbb{E}_{\bm{\xi}^{\rm G}}[\theta_{s,ij} + \sum_tp^{t}_{s,ij}] = 0 & {\rm if~~} {\kappa}_{s,ij}\ge0
		\end{cases}&\forall ij,s\in\mathcal{S}^{\rm G},\\
		\label{eq:capacity_complementary}
		&\begin{cases}
			\kappa_{s,ij} - z^t_{s,ij} \ge 0 & {\rm if~~} {p}^t_{s,ij}=0 \\
			\kappa_{s,ij} - z^t_{s,ij} = 0 & {\rm if~~} {p}^t_{s,ij}\ge0
		\end{cases}&\forall ij,s\in\mathcal{S}^{\rm G},\\
		\label{eq:strategic_transition_kap}
		&{\kappa}_{s,ij} = {\kappa}_{s-1,ij} + \hat{\kappa}_{s,ij} &\forall ij,s\in\mathcal{S}^{\rm F},\\
		\label{eq:operational_transition_kap}
		&{\kappa}_{s,ij} = {\kappa}_{s-1,ij} &\forall ij,s\in\mathcal{S}^{\rm G}.
	\end{flalign}
	From Eqs. (\ref{eq:strategic_kap_complementary})--(\ref{eq:capacity_complementary}), for $\kappa^*_{s,ij}>0$, we obtain
	\begin{flalign}
		\nonumber
		&(c^{}_{1,ij}+\hat{c}^{}_{1,ij}+\hat{c}^{}_{3,ij})\kappa^*_{1,ij}+\mathbb{E}_{\xi_2^{\rm G}}\left[(c^{}_{3,ij}+\hat{c}_{3,ij}+\hat{c}_{5,ij})\kappa^*_{3,ij}+\mathbb{E}_{\xi_4^{\rm G}}\left[\cdots+\mathbb{E}_{\xi_{S-2}^{\rm G}}\left[c^{}_{S-1,ij}\kappa^*_{S-1,ij}\right]\right]\right]&
	\end{flalign}
	\vspace{-3.2ex}
	\begin{flalign}
		\label{eq:self_financing0}
		&=\mathbb{E}_{\xi_1^{\rm F}}\left[\sum_t p^{t,*}_{2,ij}z^{t,*}_{2,ij}+\mathbb{E}_{\xi_3^{\rm F}}\left[\cdots+\mathbb{E}_{\xi_{S-1}^{\rm F}}\left[\sum_t p^{t,*}_{S,ij}z^{t,*}_{S,ij}\right]\right]\right]~~~~~~~~~~~~~~~~~~~~~~~~~~~~~~~~~~~~~~~~~~~~~~~~~~~~~~~~~~~&\forall ij.	
	\end{flalign}
	Substituting Eqs. (\ref{eq:strategic_transition_kap}) and (\ref{eq:operational_transition_kap}) into Eq. (\ref{eq:self_financing0}) yields Eq. (\ref{eq:self_financing}).
\end{proof}

\section{Evaluation of Optimal Policies for the SAV and SAV-BRT Systems}
\label{app:TotalCost}

\begin{figure}[t]
	\centering
	\begin{subfigure}{0.48\textwidth}
		\centering
		\includegraphics[width=1.0\textwidth]{TotalCostSAV.pdf}
		\caption{SAV system.}
		\label{fig:TotalCostSAV}  
	\end{subfigure} 
	\begin{subfigure}{0.48\textwidth}
		\centering
		\includegraphics[width=1.0\textwidth]{TotalCostBRT.pdf}
		\caption{SAV-BRT system.}
		\label{fig:TotalCostBRT}  
	\end{subfigure} 
	\caption{Probability distributions of the total cost over the planning horizon for $\gamma\in\{0.0, 10^{-6}, 10^{-5}, 10^{-4}\}$.}
	\label{fig:TotalCost}  
\end{figure}

{Figure \ref{fig:TotalCost} illustrates the total cost over the planning horizon for risk-aversion parameters $\gamma\in\{0.0, 10^{-6}, 10^{-5}, 10^{-4}\}$ in the form of probability distributions derived from kernel density estimators, with overlaid box plots.
The horizontal axis denotes the risk-aversion parameter $\gamma$, and the vertical axis denotes the total cost.
The left and right panels of Figure \ref{fig:TotalCost} show the probability distributions of the total costs for the SAV system and the SAV-BRT system, respectively.
The whiskers of the box plots represent, from top to bottom, the upper 100th, 99th, and 95th percentiles of the sampled total cost.

Figure \ref{fig:TotalCost} shows that the risk-averse preference clearly influences the evaluation of the optimal policy.
For both systems, as $\gamma$ increases,
the expected total cost rises while the upper tail of the cost distribution is suppressed.
This represents the trade-off of risk-averse
policies such as hedging against severe performance degradation in adverse scenarios at the expense of average performance.

Furthermore, comparison with Table~\ref{tab:gap} indicates how strongly this risk aversion is expressed. 
For the SAV system, the lower bound of the optimal value at $\gamma=10^{-4}$, $4.105\times10^{6}$, exceeds the $99$th
percentile of the corresponding total-cost distribution. 
A similar relationship holds for the SAV-BRT system, whose lower bound at $\gamma=10^{-4}$, $2.716\times10^{6}$, likewise lies above the $99$th percentile of its distribution.
These results indicate that the chosen range
of $\gamma$ is sufficient to span risk-averse preferences. }